\newcommand{\CC}{\mathbb{C}}
\newcommand{\NN}{\mathbb{N}}
\newcommand{\RR}{\mathbb{R}}
\newcommand{\ZZ}{\mathbb{Z}}
\newcommand{\DD}{\mathbb{D}}
\newcommand{\HH}{\mathbb{H}}
\newtheorem{theorem}{Theorem}[section]
\newtheorem{lemma}[theorem]{Lemma}
\newtheorem{proposition}[theorem]{Proposition}
\newtheorem{definition}[theorem]{Definition}
\newtheorem{example}[theorem]{Example}
\newtheorem{remark}[theorem]{Remark}
\DeclareMathOperator{\diam}{diam}
\DeclareMathOperator{\Vol}{Vol}
\DeclareMathOperator{\inj}{inj}
\DeclareMathOperator{\Arcsinh}{Arcsinh}
\DeclareMathOperator{\Arccosh}{Arccosh}
\newcommand{\spb}[1]{\smallskip}
\newcommand{\mpb}[1]{\medskip}
\newcommand{\bpb}[1]{\bigskip}
\newcommand{\p}{\partial}
\renewcommand{\a}{\alpha}
\renewcommand{\b}{\beta}
\newcommand{\e}{\varepsilon}
\renewcommand{\d}{\delta}
\newcommand{\D}{\Delta}
\newcommand{\g}{\gamma}
\newcommand{\G}{\Gamma}
\renewcommand{\l}{\lambda}
\renewcommand{\L}{\Lambda}
\renewcommand{\O}{\Omega}
\newcommand{\s}{\sigma}
\begin{document}
\DeclareGraphicsExtensions{.jpg,.pdf,.mps,.png}

\title{Isoperimetric inequalities in Riemann surfaces and graphs}

\author[\'{A}lvaro Mart\'{\i}nez-P\'erez]{\'{A}lvaro Mart\'{\i}nez-P\'erez$^{(1)}$}
\address{ Facultad CC. Sociales de Talavera,
Avda. Real Fábrica de Seda, s/n. 45600 Talavera de la Reina, Toledo, Spain}
\email{alvaro.martinezperez@uclm.es}
\thanks{$^{(1)}$ Supported in part by a grant
from Ministerio de Econom{\'\i}a y Competitividad (MTM2015-63612P), Spain.
}

\author[Jos\'e M. Rodr{\'\i}guez]{Jos\'e M. Rodr{\'\i}guez$^{(2)}$}
\address{Departamento de Matem\'aticas, Universidad Carlos III de Madrid,
Avenida de la Universidad 30, 28911 Legan\'es, Madrid, Spain}
\email{jomaro@math.uc3m.es}
\thanks{$^{(2)}$ Supported in part by two grants
from Ministerio de Econom{\'\i}a y Competititvidad, Agencia Estatal de
Investigación (AEI) and Fondo Europeo de Desarrollo Regional (FEDER) (MTM2016-78227-C2-1-P and MTM2017-90584-REDT), Spain.}

\date{\today}


\begin{abstract}
A celebrated theorem of Kanai states that quasi-isometries preserve isoperimetric inequalities between uniform Riemannian manifolds (with positive injectivity radius) and graphs.
Our main result states that we can study the (Cheeger) isoperimetric inequality in a Riemann surface by using a graph related to it, even if the surface has injectivity radius zero
(this graph is inspired in Kanai's graph, but it is different form it).
We also present an application relating Gromov boundary and isoperimetric inequality.
\end{abstract}

\maketitle{}

{\it Keywords:} Isoperimetric inequality; Cheeger isoperimetric constant; Riemann surface; Poincar\'e metric; Gromov hyperbolicity.

{\it 2010 AMS Subject Classification numbers:} Primary 53C21, 53C23; Secondary 58C40.

\section{Introduction}

Isoperimetric inequalities are of interest in pure and applied mathematics (see, e.g., \cite{C2}, \cite{Po}).
There are close connections between isoperimetric inequality and some conformal invariants of Riemannian manifolds and graphs, namely
Poincar\'e-Sobolev inequalities,
the bottom of the spectrum of the Laplace-Beltrami operator, the exponent of convergence,
and the Hausdorff dimensions of the sets of both bounded geodesics and escaping geodesics in a negatively curved surface
(see \cite{BJ}, \cite[p.228]{Bu}, \cite{Ch}, \cite{FM1}, \cite{FM2}, \cite{FMP1}, \cite{FMP2}, \cite{FR1}, \cite{MRT}, \cite{P}, \cite[p.333]{S}).
The Cheeger isoperimetric inequality is closely related to the project of Ancona
on the space of positive harmonic functions of Gromov-hyperbolic manifolds and graphs (\cite{A1}, \cite{A2} and \cite{A3}).
In fact, in the study of the Laplace operator on a hyperbolic manifold or graph $X$,
Ancona obtained in these three last papers interesting results, under the additional assumption that the bottom of the spectrum of the Laplace spectrum $\l_1(X)$ is positive.
The well-known Cheeger inequality $\l_1(X) \ge \frac14 \,h(X)^2$, where $h(X)$ is the isoperimetric constant of $X$,
guarantees that $\l_1(X)>0$ when $h(X) > 0$ (see \cite{Bu1} for a converse inequality).
Hence, the results of this paper are useful in order to obtain these Ancona's results.

Given any Riemannian $n$-manifold $M$, the Cheeger isoperimetric constant of $M$ is defined as
\[h(M) = \inf_\Omega \frac{\Vol_{n-1,M}(\partial \Omega)}{\Vol_{n,M}(\Omega)} \,,\]
where $\Omega$ ranges over all non-empty bounded open subsets of $M$,
and $\Vol_{k,M}(B)$ denotes the $k$-dimensional Riemannian volume in $M$ of the set $B$.
We write $\Vol_{2,M}=A_M$ and $\Vol_{1,M}=L_M$.

Given any graph $\G=(V,E)=(V(\G),E(\G))$, let us consider the natural length metric $d_{\,\Gamma}$ where every edge has length 1.
For any graph $\Gamma$, any vertex $v\in V$ and any $k\in \mathbb{N}$, let 
$B_k(v)$ and $\bar{B}_k(v)$ denote the open and closed balls, respectively.

The combinatorial Cheeger isoperimetric constant of $\Gamma$ is defined to be
\[h(\Gamma) = \inf_A \frac{|\partial A|}{|A|} \,,\]
where $A$ ranges over all non-empty finite subsets of vertices in $\Gamma$,
$\partial A = \{v \in \Gamma \, | \, d_{\,\Gamma}(v,A) = 1\}$ and $|A|$ denotes the cardinality of $A$.

We say that a Riemannian manifold or graph $X$ satisfies the (Cheeger or linear) \emph{isoperimetric inequality} (LII) if $h(X)>0$, since in this case
\begin{equation} \label{eq:1.1}
\Vol_{n,M}(A) \le h(X)^{-1} \Vol_{n-1,M}(\partial A) ,
\end{equation}
for every bounded open set $A \subseteq X$ if $X$ is a Riemannian $n$-manifold, and
$$
|A| \le h(X)^{-1} |\partial A| ,
$$
for every finite set $A \subseteq V(X)$ if $X$ is a graph.

Along the paper, we just consider manifolds and graphs $X$ which are connected.
This is not a loss of generality, since if $X$ has connected components $\{X_j\}$, then $h(X)=\inf_j h(X_j)$.

Let $(X,d_X)$ and $(Y,d_Y)$  be two metric spaces. A map $f: X\longrightarrow Y$ is said to be
an $(\alpha, \beta)$-\emph{quasi-isometric embedding}, with constants $\alpha\geq 1,\
\beta\geq 0$, if for every $x, y\in X$:
$$
\alpha^{-1}d_X(x,y)-\beta\leq d_{\,Y}(f(x),f(y))\leq \alpha \, d_X(x,y)+\beta.
$$
The function $f$ is $\varepsilon$-\emph{full} if
for each $y \in Y$ there exists $x\in X$ with $d_Y(f(x),y)\leq \varepsilon$.

A map $f: X\longrightarrow Y$ is said to be
a \emph{quasi-isometry}, if there exist constants $\alpha\geq 1,\
\beta,\varepsilon \geq 0$ such that $f$ is an $\varepsilon$-full
$(\alpha, \beta)$-quasi-isometric embedding.
Two metric spaces $X$ and $Y$ are \emph{quasi-isometric} if there exists
a quasi-isometry $f:X\longrightarrow Y$.
One can check that to be quasi-isometric is an equivalence relation.

A graph $\Gamma$ is said to be $\mu$-\emph{uniform} if each vertex $p$ of $V$ has at most $\mu$ neighbors, i.e.,
\[\sup\big\{|N(p)| : \, p\in V(\G)\big\}\leq \mu.  \]
If a graph $\Gamma$ is $\mu$-uniform for some constant $\mu$ we say that $\Gamma$ is \emph{uniform}.

A complete Riemannian $n$-manifold $M$ is \emph{uniform} if it has positive injectivity radius and a lower bound on its Ricci curvature.

The \emph{injectivity radius} inj$(x)$ of \emph{$x\in M$} is defined as the supremum of those $r>0$ such that $B_r(x)$ is simply connected or, equivalently,
as half the infimum of the lengths of the (homotopically non-trivial) loops based at $x$.
The \emph{injectivity radius} inj$(M)$ \emph{of $M$} is the infimum over $x\in M$ of inj$(x)$.

A celebrated theorem of Kanai in \cite{K} states that quasi-isometries preserve isoperimetric inequalities
between uniform Riemannian manifolds and graphs.
A main ingredient in Kanai's proof is the following interesting fact:
we can study the LII in a Riemannian manifold $M$ by studying the LII in a graph related to it, if $M$ is uniform.
Kanai's result also holds with different hypotheses in the context of non-exceptional Riemann surfaces
(although it is not possible to relate the LII in the Riemann surface with the LII in the graph provided by Kanai)
\cite[Theorem 1.2]{CGPR}, \cite[Theorem 5.1]{GPPRT}.

A non-exceptional Riemann surface $S$ is a Riemann surface whose universal covering space is
the unit disk $\DD=\{z\in\CC:\; |z|<1\}$, endowed with its Poincar\'e
metric (also called the hyperbolic metric), i.e., the
metric obtained by projecting the Poincar\'e metric of the unit disk
$$
ds = \frac {2\,|dz|}{1-|z|^2} \,.
$$
With this metric, $S$ is a complete Riemannian manifold with constant curvature $-1$.
The only Riemann surfaces which are left out (the exceptional Riemann surfaces) are the sphere, the plane, the punctured plane and the tori.

A natural context to apply Kanai's results are Riemann surfaces endowed with their Poincar\'e metrics, since they have constant negative curvature.
However, these surfaces usually have isolated singularities which are cusps, and thus, injectivity radius equal to zero. That means that, unfortunately, it is not possible to apply Kanai's results in this case.
Recall that the Poincar\'e metric plays a main role in geometric function theory, since if $R,S$ are Riemann surfaces with their Poincar\'e metrics, then $d_S(f(w),f(z)) \le d_R(w,z)$ for every holomorphic map $f:R \rightarrow S$ and every $z,w \in R$.

Our main result is Theorem \ref{th:graph}, that states that we can study the LII in a Riemann surface by using a graph related to it, even if the surface has injectivity radius zero
(this graph is inspired in Kanai's graph, but it is different form it).
Thus, Theorem \ref{th:graph} shows that the main tool in Kanai's proof also works for all non-exceptional Riemann surfaces,
even if the injectivity radius is zero (although, unfortunately, the LII is not preserved by quasi-isometries between Riemann surfaces with injectivity radius zero, see \cite[Example 2.3]{CGPR}).
Although this result is interesting by itself,
we present in the last section an application of Theorem \ref{th:graph} relating Gromov boundary and LII (see Theorem \ref{th:sufic}).

Finally, we want to remark that \cite[Theorem 5.5]{GPPR} gives that every orientable and complete Riemannian surface with pinched negative curvature
(with Gaussian curvature $K$ satisfying $-k_2^2 \le K \le -k_1^2<0$)
is bilipschitz equivalent to a non-exceptional Riemann surface (and therefore with constant negative curvature $-1$).
\cite[Theorem 5.5]{GPPR} shows that it suffices to work with surfaces of curvature $-1$ (instead of pinched negative curvature) in order to check LII, since this inequality is invariant by bilipschitz maps.
This fact enlarges the scope of Theorems \ref{th:graph} and \ref{th:sufic}.

\section{Background and technical results}

A {\it geodesic domain} in a non-exceptional Riemann surface $S$ is a domain $G\subset S$ (which is not simply or doubly connected)
such that $\p G$ consists of finitely many simple closed geodesics, and $A_S(G)$ is finite.
$G$ does not have to be relatively compact since it may ``surround" finitely many cusps.
We can think of a cusp as a boundary geodesic of zero length.
Recall that if $\g$ is a closed curve in $S$ and $[\g]$ denotes its free homotopy class in $S$, then there is a unique simple closed geodesic of minimal length in the
class, unless $\g$ is homotopic to zero or surrounds only a cusp;
in this case it is not possible to find such geodesic because there are curves in the class with arbitrarily small length.

In \cite[Lemma 1.2]{FR1} it was proved the following.

\begin{lemma} \label{l:FR1}
If a non-exceptional Riemann surface $S$ satisfies
$$
A_S (G) \le c \, L_S (\p G)
$$
for some constant $c$ and every geodesic domain $G$ in $S$, then $S$ has LII.
\end{lemma}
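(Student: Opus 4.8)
The plan is to prove directly that every bounded open set $\Omega\subset S$ satisfies $A_S(\Omega)\le(c+1)\,L_S(\p\Omega)$; this gives $h(S)\ge 1/(c+1)>0$, so $S$ has LII (we may harmlessly assume $c\ge 1$). The idea is to turn $\Omega$ into a geodesic domain $G$ by a surgery on $\p\Omega$ whose area cost is bounded by $L_S(\p\Omega)$ and which does not increase the boundary length, and then apply the hypothesis to $G$. First I would make a standard reduction: approximating $\chi_{\Omega}$ by smooth functions and using Sard's theorem and the coarea formula, one may assume that $\Omega$ is connected and that $\p\Omega$ is a finite disjoint union of smooth simple closed curves $\g_1,\dots,\g_k$.

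Before the surgery I would dispose of two low-complexity cases without using the hypothesis. If $\Omega$ is simply connected it lifts isometrically to the universal cover $\DD$, and since the hyperbolic plane of curvature $-1$ has Cheeger constant $1$ we get $A_S(\Omega)\le L_S(\p\Omega)$. If $\Omega$ is doubly connected, its core curve is either contractible (reducing to the previous case) or freely homotopic to a simple closed geodesic, or cusp-peripheral; in the latter two cases the collar and cusp-region estimates below already give $A_S(\Omega)\le L_S(\p\Omega)$. So from now on assume $\Omega$ has richer topology.

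For the surgery I would process each $\g_i$ according to its free homotopy class in $S$. If $\g_i$ is null-homotopic, filling in the topological disk it bounds (or deleting that disk, if it is a hole of $\Omega$) removes $\g_i$ from the boundary, does not increase the boundary length, and changes the area by at most $L_S(\g_i)$ by the $\DD$-isoperimetric inequality. If $\g_i$ is freely homotopic to a simple closed geodesic $\g_i^*$, I push $\g_i$ across the annulus $R_i$ that it cobounds with $\g_i^*$; this replaces $\g_i$ by $\g_i^*$ in the boundary, does not increase length since $L_S(\g_i^*)\le L_S(\g_i)$, and changes the area by $A_S(R_i)$. If $\g_i$ is cusp-peripheral, I enlarge $\Omega$ through the cusp-side region $C_{\g_i}$ of $\g_i$ up to the cusp, so that $\g_i$ becomes a boundary ``geodesic of length zero'' and the area increases by $A_S(C_{\g_i})$. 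The quantitative heart of the argument is that both kinds of modification are cheap:
\[
A_S(R_i)\le L_S(\g_i),\qquad A_S(C_{\g_i})\le L_S(\g_i).
\]
In Fermi coordinates along $\g_i^*$ the metric is $dt^2+\cosh^2 t\,ds^2$; writing the relevant bounding curve as a graph $t=g(s)$ over the core one gets $A_S(R_i)=\int\sinh g\,ds\le\int\cosh g\,ds\le L_S(\g_i)$, and the cusp bound is analogous in horocyclic coordinates, the non-graphical case following by passing to an enveloping graph.

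Finally, since disjoint simple closed curves that are not freely homotopic have disjoint geodesic representatives (and homotopic ones share one), and since $\Omega$ is bounded and hence of finite area, these modifications can be carried out compatibly and yield an open set $G$ of finite area whose boundary consists of finitely many simple closed geodesics, possibly surrounding finitely many cusps. Each component of $G$ is then a geodesic domain, or is simply or doubly connected and handled as above. Since the total area removed along the way is at most $\sum_i L_S(\g_i)=L_S(\p\Omega)$ and $L_S(\p G)\le L_S(\p\Omega)$, applying the hypothesis to the geodesic-domain components of $G$ (and the elementary estimates to the others) gives $A_S(\Omega)\le A_S(G)+L_S(\p\Omega)\le c\,L_S(\p G)+L_S(\p\Omega)\le(c+1)\,L_S(\p\Omega)$, as required. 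The step I expect to be the main obstacle is the rigorous execution of this boundary surgery — keeping the intermediate sets legitimate (finite area, no spurious boundary arcs, mutually compatible collars and cusp neighborhoods, the cusp case being the most delicate) and proving the area-versus-length estimates for arbitrary, not necessarily graphical, boundary curves. This is precisely where the hyperbolic geometry of $S$, namely the collar lemma and the structure of cusp neighborhoods, enters; the approximation in the first step, though routine, also needs some care since $\p\Omega$ is only a topological boundary.
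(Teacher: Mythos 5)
The paper does not actually prove this lemma: it simply cites \cite[Lemma~1.2]{FR1}, and a few lines later also cites Matsuzaki's sharper quantitative statement $h(S)^{-1}\le h_g(S)^{-1}+1$. So there is no ``paper's own proof'' to compare against, and your argument can only be judged on its own merits. Your boundary-straightening strategy is the natural one, and it is reassuring that it recovers exactly the constant $c+1$ that Matsuzaki's inequality predicts; the three basic estimates you rely on (Cheeger constant $1$ for $\DD$, $A\le L$ for a collar region in Fermi coordinates $dt^2+\cosh^2 t\,ds^2$, and the analogous horocyclic bound for a cusp region) are all correct. Two points deserve to be made more precise than you state them. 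First, the ``enveloping graph'' remark is not quite the right reduction: what works is a slicing argument along the Fermi foliation, observing that over each $s$ the curve $\g_i$ meets the slice at every endpoint of the intervals making up $R_i\cap\{s=\text{const}\}$, so
\[
A_S(R_i)=\int\Big(\sum_k(\sinh a_k(s)-\sinh b_k(s))\Big)\,ds\le\int\sum_k\cosh a_k(s)\,ds\le L_S(\g_i),
\]
with the same idea working on each side if $\g_i$ crosses $\g_i^*$, and in the horocyclic picture for the cusp case. Second, after the surgery you should justify that each nonempty component of $G$ really is a geodesic domain: its boundary consists of simple closed geodesics and cusps, and Gauss--Bonnet ($A(G_r)=-2\pi\chi(G_r)>0$) rules out simply or doubly connected components, so the ``handled as above'' clause for post-surgery pieces is in fact vacuous. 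Beyond these, the gap you yourself flag --- carrying out the surgery compatibly when several $\g_i$ share a geodesic representative, when a representative or a maximal cusp/collar region meets $\Omega$, and keeping the boundary embedded and the bookkeeping of area changes monotone curve by curve --- is the genuinely delicate part, and as written it is a sketch rather than a proof; but the difficulties there are of execution, not of concept, so the proposal is sound in outline.
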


In fact, if we define $h_g(S)$ as
$$
h_g(S) = \inf_{G}  \frac{L_S (\p G)}{A_S (G)}\,,
$$
where $G$ ranges over all geodesic domains in $S$, then \cite[Theorem 7]{Matsuzaki} gives
$$
h(S)^{-1}\le h_g(S)^{-1}+1 .
$$

\begin{definition}
Given a non-exceptional Riemann surface $S$, a subset $\O\subset S$ and a positive constant $\d$, let us denote by $\p_\d \O$ the set of points $z\in \p\O$ such that the connected component of $\p\O$ containing $z$ has length at least $\d$.
A non-exceptional Riemann surface $X$ is $\d$-\emph{regular}
if there exists a positive constant $c$ such that every geodesic domain $G$ in $X$ satisfies that $L(\p_\d G) \ge c\, CC(\p G\setminus \p_\d G)$
(where $CC(A)$ denotes the cardinality of the set of connected components of $A$).
We say that $X$ is \emph{regular} if it is $\d$-regular for some $\d > 0$.
\end{definition}

\begin{theorem} \label{t:regular}
If a non-exceptional Riemann surface is not regular, then it does not have LII.
\end{theorem}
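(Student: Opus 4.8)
The plan is to prove the contrapositive: assuming $X$ is \emph{not} $\d$-regular for \emph{every} $\d>0$, I want to construct geodesic domains $G$ in $X$ for which the ratio $L_S(\p G)/A_S(G)$ is arbitrarily small, so that $h_g(X)=0$, and then invoke Lemma~\ref{l:FR1} together with the Matsuzaki estimate $h(S)^{-1}\le h_g(S)^{-1}+1$ (which forces $h(X)=0$, i.e. $X$ does not have LII). So fix an arbitrary $\d>0$ and an arbitrary constant $c>0$; by failure of $\d$-regularity there is a geodesic domain $G$ with $L(\p_\d G) < c\, CC(\p G\setminus\p_\d G)$. The key geometric input is a uniform lower bound on the area contributed by each ``short'' boundary component (a boundary geodesic of length $<\d$, or a cusp): a collar lemma argument shows that in a surface of curvature $-1$ every simple closed geodesic of length $\ell<\d$ is surrounded by an embedded collar of area bounded below by a positive constant depending only on $\d$ (and similarly a cusp region has a standard horocyclic neighborhood of definite area), and these collars are pairwise disjoint. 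Hence $A_S(G) \ge c_1\, CC(\p G\setminus\p_\d G)$ for a constant $c_1=c_1(\d)>0$ independent of $G$.

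Combining the two inequalities: on one hand $L_S(\p G) = L(\p_\d G) + L(\p G\setminus\p_\d G)$, and on the other hand I need to also control $L(\p G\setminus\p_\d G)$, which is the total length of the short components — but each such component has length $<\d$, so $L(\p G\setminus\p_\d G) < \d\, CC(\p G\setminus\p_\d G)$. Therefore
$$
\frac{L_S(\p G)}{A_S(G)} \le \frac{L(\p_\d G) + \d\, CC(\p G\setminus\p_\d G)}{c_1\, CC(\p G\setminus\p_\d G)} < \frac{c\, CC(\p G\setminus\p_\d G) + \d\, CC(\p G\setminus\p_\d G)}{c_1\, CC(\p G\setminus\p_\d G)} = \frac{c+\d}{c_1}.
$$
Since $c$ was arbitrary and $c_1$ depends only on $\d$, letting $c\to 0$ shows that for this fixed $\d$ there are geodesic domains with ratio arbitrarily close to $\d/c_1$. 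Then letting $\d\to 0$ (noting $c_1(\d)$ can be taken so that $\d/c_1(\d)\to 0$, or more carefully tracking the collar area bound, which for a geodesic of length $\ell$ scales like $\log(1/\ell)$ and for a short component the relevant comparison still gives $\d/c_1(\d)\to 0$) yields $h_g(X)=0$, hence $h(X)=0$.

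The main obstacle is making the collar area bound quantitative and checking that the ratio $\d/c_1(\d)$ genuinely tends to $0$ as $\d\to 0$: one must be careful that a geodesic domain need not contain the \emph{full} standard collar of each short geodesic (the collar might stick out of $G$), so I would instead use the half-collar lying on the $G$-side, or argue that $G$ surrounds the cusp/thin part and thus contains a definite-area horoball or collar neighborhood. A secondary point to verify is that the constructed sets are legitimate geodesic domains with $A_S(G)$ finite and boundary a finite union of simple closed geodesics — this is automatic since we start from a given geodesic domain $G$ and do not modify it; we only estimate its invariants. Once the collar estimate is pinned down, the rest is the elementary inequality manipulation above.
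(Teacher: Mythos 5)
Your proof is correct in spirit but takes a genuinely different route from the paper's. The paper's proof avoids the collar-area estimate entirely: by Gauss--Bonnet, $A_S(G)=2\pi(m+p+2g-2)$ where $m=CC(\p G)$, $p$ is the number of cusps surrounded, and $g$ is the genus, and a short case analysis shows $m\le 3(m+p+2g-2)$; this yields the $\d$-independent lower bound $A_S(G)\ge\frac{2\pi}{3}\,m\ge\frac{2\pi}{3}\,CC(\p G\setminus\p_\d G)$, which combined with $L_S(\p G)<\frac{2}{n}CC(\p G\setminus\p_{1/n}G)$ (taking $\d=c=1/n$) immediately gives $A_S(G_n)>\frac{n\pi}{3}L_S(\p G_n)$. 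Your alternative works too: the half-collar on the $G$-side of a boundary geodesic of length $\ell$ has area $\ell/\sinh(\ell/2)$, which is decreasing in $\ell$ and tends to $2$ as $\ell\to 0$, so $c_1(\d)=\d/\sinh(\d/2)$ is bounded away from $0$ and $\d/c_1(\d)=\sinh(\d/2)\to 0$ (note: the collar \emph{width} grows like $\log(1/\ell)$, not the area, which tends to a constant; and since cusps do not lie on $\p G$ they do not contribute to $CC(\p G\setminus\p_\d G)$, so the horoball remark is superfluous). You rightly flag that you must use half-collars on the $G$-side and check they are pairwise disjoint and contained in $G$. The Gauss--Bonnet route is cleaner precisely because it buys one absolute constant and dispenses with all of that.

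There is, however, one genuine logical slip. You invoke Lemma~\ref{l:FR1} together with the Matsuzaki estimate $h(S)^{-1}\le h_g(S)^{-1}+1$ as ``forcing'' $h(X)=0$ once $h_g(X)=0$. Both of these results go the \emph{wrong} way: they assert that $h_g>0$ implies $h>0$ (equivalently, $h=0\Rightarrow h_g=0$), which is the converse of what you need, and setting $h_g=0$ in the Matsuzaki inequality just gives $h^{-1}\le\infty$. What you actually need is $h_g=0\Rightarrow h=0$, i.e.\ $h\le h_g$. That holds because a geodesic domain with no cusps is a bounded open set and hence a competitor in the infimum defining $h$, and a geodesic domain surrounding cusps can be exhausted by bounded open sets obtained by removing small cusp neighborhoods, whose ratio $L(\p\O)/A(\O)$ tends to $L(\p G)/A(G)$. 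The paper uses this fact tacitly when it concludes ``$S$ does not have LII''. So this is a mis-attribution and a missing (easy, but nontrivially stated) step rather than a fatal flaw, but as written the cited lemmas do not deliver the implication you claim.
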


\begin{proof}
Since the non-exceptional Riemann surface $S$ is not regular, for each $n\in\NN$ there exists a geodesic domain $G_n$ in $S$ with
$$
L_S (\p_{1/n} G_n) < \frac1n \, CC(\p G_n\setminus \p_{1/n} G_n) .
$$
Hence, since $L_S(\p G_n\setminus \p_{1/n} G_n)< \frac1n \, CC(\p G_n\setminus \p_{1/n} G_n)$, we have
\begin{equation} \label{eq:reg1}
\begin{aligned}
L_S(\p G_n)=L_S(\p_{1/n} G_n) + L_S(\p G_n\setminus \p_{1/n} G_n)< \frac2n \, CC(\p G_n\setminus \p_{1/n} G_n).
\end{aligned}
\end{equation}

If $m$, $p$ and $g$ denote the cardinality of the simple closed geodesics in $\p G_n$, the cusps surrounded by $G_n$ and the genus of $G_n$, respectively, it is well-known that Gauss-Bonnet Theorem gives $A_S(G_n)= 2\pi(m+p-2+2g)$ and so, $m+p-2+2g \ge 1$.
Note that $m = CC(\p G_n)$.
We are going to prove
\begin{equation} \label{eq:reg2}
m \le 3(m+p-2+2g).
\end{equation}
If $g \ge 1$, then $m+p-2+2g \ge m+p \ge m$.
Assume that $g=0$.
If $p \ge 2$, then $m+p-2+2g = m+p-2 \ge m$.
If $p=1$, then, since $m-1 = m+p-2+2g \ge 1$, we have $m \ge 2$ and $m \le 2(m-1) = 2(m+p-2+2g)$.
If $p=0$, then, since $m-2 = m+p-2+2g \ge 1$, we have $m \ge 3$ and $m \le 3(m-2) = 3(m+p-2+2g)$.
Therefore, \eqref{eq:reg2} holds.

Now, \eqref{eq:reg1} and \eqref{eq:reg2} give
$$
\begin{aligned}
\frac{n}{2} \, L_S (\p G_n)
& < CC(\p G_n\setminus \p_{1/n} G_n)
\le m
=  \frac{m}{2\pi(m+p-2+2g)}\, A_S(G_n)
\le \frac{3}{2\pi }\, A_S(G_n),
\\
A_S(G_n)
& > \frac{n\pi}{3} \, L_S (\p G_n) .
\end{aligned}
$$
Thus, $S$ does not have LII.
\end{proof}

\begin{remark} \label{r:regular}
Theorem \ref{t:regular} shows that, in order to study LII in non-exceptional Riemann surfaces, it suffices to consider regular surfaces.
\end{remark}

Next, we need to consider also bordered Riemann surfaces whose boundary
is a finite union of disjoint simple closed curves.
They will always arise as closed subsets of some open non-exceptional Riemann surface, and \textit{we
give them the metric induced from the Poincar\'e metric of the host open surface.}
Such induced metric has of course curvature $K = -1$.
An example of such a bordered Riemann surface is the closure of any geodesic domain.

\smallskip

A {\it Y-piece} is a compact bordered Riemann
surface which is topologically a sphere without three open disks and whose boundary curves are simple closed geodesics.
Given three positive numbers $a,\,b,\,c$, there is a unique (up to
conformal mapping) Y-piece such that their boundary curves have
lengths $a,\,b,\,c$ (see, e.g., \cite[p.410]{R}).
They are a standard tool for constructing Riemann
surfaces. A clear description of these Y-pieces and their use is given
in \cite[Chapter X.3]{C1} and \cite[Chapter 1]{Bu}.

\smallskip

A {\it generalized Y-piece} is a bordered or non-bordered
Riemann surface which is topologically a sphere without $n$ open
disks and $m$ points, with integers $n,m\ge 0$ such that $n+m=3$,
so that the $n$ boundary curves are
simple closed geodesics and the $m$ deleted points are cusps.
Observe that a generalized Y-piece is topologically
the union of a Y-piece and $m$ cylinders, with $0\le m\le 3$.
It is clear that the interior of every generalized Y-piece
is a geodesic domain. Furthermore,
it is  known that the closure of every geodesic domain is a finite
union (with pairwise disjoint interiors) of generalized
Y-pieces \cite[Proposition 3.2]{AR}.

\smallskip

The following example shows that regularity is not a sufficient condition in order to have LII.

\begin{example} Let $X$ be a non-exceptional Riemann surface built as follows:
Consider the generalized $Y$-pieces $\{Y_i\}_{i\in \ZZ}$ with one cusp and such that $\partial Y_i=\eta^i_1\cup\eta^i_2$ with $L(\eta^i_1)=L(\eta^i_2)=1$.
Let $X$ be the surface obtained
by identifying $\eta^i_2$ with $\eta^{i+1}_1$ for every $i\in \ZZ$.
It is clear that if $G_n=\cup_{i=1}^nY_i$, then $A(G_n)=2\pi n$, $L(\partial G_n)=2$ and $X$ does not have LII.
However, for any geodesic domain $G$ and any $0< \d <1$, $L(\partial_\d G)= L(\partial G) \ge 0 = CC(\p G\setminus \p_{\d} G)$, and $X$ is regular.
\end{example}

\bpb

%
%
%
%
%
%
%
%
%
%
%
%

A \emph{collar} in a non-exceptional Riemann surface $S$
about a simple closed geodesic $\g$ is a doubly connected
domain in $S$ ``bounded" by two
Jordan curves (called the boundary curves of the collar) orthogonal
to the pencil of geodesics emanating from $\g$;
such collar is equal to $\{p\in S:\,d_S(p,\g) < d\}$,
for some positive constant $d$.
The constant $d$ is called the \emph{width} of the collar.

Let $S$ be a non-exceptional Riemann surface with a cusp $r$
(if $S\subset\CC$, every isolated point in $\p S$ is a cusp).
A {\it collar} in $S$ about $r$ is a doubly connected domain in $S$
``bounded" both by $r$
and a Jordan curve (called the boundary curve of the collar) orthogonal
to the pencil of geodesics emanating from $r$.
It is well-known that the
length of the boundary curve is equal to the area of the collar
(see, e.g., \cite{Be}).
A collar of area $\l$ about $r$ is called a $\l$-collar.
We denote by $C(r,\l)$ the $\l$-collar of the cusp $r$ with area $0< \l \le 2$.

We will use several times the following result known as Collar Lemma (see \cite{R}).

\begin{lemma} \label{l:CollarLemma}
If $\g$ is a simple closed geodesic in a non-exceptional Riemann surface $S$, then
there exists a collar about
$\g$ of width $w$, where
$\cosh w = \coth (L_S(\g)/2)$.
\end{lemma}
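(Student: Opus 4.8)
The statement is the classical Collar Lemma, so one legitimate option is simply to quote it; here is the argument I would reconstruct. The plan is to pass to the universal cover and turn the existence of the collar into a separation estimate between the lifts of $\g$. Write $S=\DD/\G$ with $\G$ a Fuchsian group, fix a lift $\tilde\g\subset\DD$ of $\g$ (a complete geodesic) and let $h\in\G$ be the primitive hyperbolic transformation with axis $\tilde\g$; its translation length is $\ell:=L_S(\g)$ and $\langle h\rangle$ is the stabilizer of $\tilde\g$ in $\G$. For $d>0$ set $U_d=\{z\in\DD:\,d_\DD(z,\tilde\g)<d\}$, a ``wedge'' neighbourhood of $\tilde\g$. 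The set $\{p\in S:\,d_S(p,\g)<d\}$ is exactly the projection of $U_d$, and it is a genuine collar (a doubly connected domain bounded by two curves orthogonal to the pencil of geodesics emanating from $\g$) precisely when the covering projection restricted to $U_d$ descends to an injection on $U_d/\langle h\rangle$, i.e. when $g(U_d)\cap U_d=\varnothing$ for every $g\in\G\setminus\langle h\rangle$. Since $\g$ is simple, distinct lifts of $\g$ are pairwise disjoint geodesics (and, by discreteness of $\G$, not even asymptotic), so for such $g$ the geodesic $g\tilde\g$ is disjoint from $\tilde\g$ and $g(U_d)$ is just the $d$-neighbourhood of $g\tilde\g$; two $d$-neighbourhoods of disjoint geodesics meet iff the geodesics lie at distance $<2d$. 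Hence a collar of width $d$ exists as soon as every lift of $\g$ other than $\tilde\g$ is at distance $\ge 2d$ from $\tilde\g$ --- equivalently, as soon as $2d$ does not exceed the length of the shortest geodesic arc that leaves $\g$ perpendicularly, returns to $\g$ perpendicularly, and otherwise misses $\g$ (the shortest ``orthogeodesic loop'' on $\g$).

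It then remains to bound that length from below in terms of $\ell$. Let $\d$ be such a shortest orthogeodesic, of length $m$, and lift it to the common perpendicular $\tilde\d$ of $\tilde\g$ and a second lift $\tilde\g'=g\tilde\g$ of $\g$, so that $m=d_\DD(\tilde\g,\tilde\g')$ and $g$ conjugates $h$ to the primitive hyperbolic $h'$ with axis $\tilde\g'$ (again of translation length $\ell$). Passing to the upper half-plane model with $\tilde\g$ the imaginary axis and $h:z\mapsto e^{\ell}z$, one writes $\tilde\g'$ explicitly in terms of $m$; and the simplicity of $\g$ forces a whole configuration of lifts to be pairwise disjoint: $\tilde\g$, $\tilde\g'$, the iterates $h^{k}\tilde\g'$ and $(h')^{k}\tilde\g$, and the image of $\tilde\d$ under the deck transformation that moves the foot of $\tilde\d$ along $\tilde\g$. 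Writing these disjointness conditions out with elementary hyperbolic trigonometry --- the distance between the geodesics with endpoints $\{0,\infty\}$ and $\{a,b\}$ is $\Arccosh\big((a+b)/(b-a)\big)$, and $h$, $h'$ act on $\p\DD$ by explicit M\"obius maps --- one extracts the inequality $\cosh m\ge 2\coth^2(\ell/2)-1$, equivalently $m\ge 2w$ with $\cosh w=\coth(\ell/2)$. Inserting this into the previous paragraph yields a collar of width $w$, as claimed.

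I expect the second paragraph --- the sharp trigonometric estimate --- to be the only genuine obstacle: one has to isolate the extremal configuration of mutually disjoint lifts (distinguishing the two topological types of $\d$, according as it joins the two sides of $\g$ or one side of $\g$ to itself) and carry out the corresponding right-angled hexagon/quadrilateral computation; it is precisely there that the value $\cosh w=\coth(L_S(\g)/2)$ is produced, as a limiting case in which two lifts of the family become asymptotic. The first paragraph, by contrast, is soft, modulo the standard fact that in curvature $-1$ the normal exponential map of $\g$ is a diffeomorphism on the slab $\g\times(-w,w)$ exactly while $2w$ stays below the length of the shortest orthogeodesic loop.
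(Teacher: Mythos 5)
The paper gives no proof of this lemma: it simply states the sharp Collar Lemma and refers to Randol \cite{R}, so there is no argument in the paper to compare yours against.

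Your outline follows the standard route (essentially Randol's own): pass to the universal cover $\DD/\G$, note that the normal $d$-tube $U_d$ around a fixed lift $\tilde\g$ projects to an embedded collar precisely when every other lift of $\g$ stays at distance $\ge 2d$ from $\tilde\g$, and translate this into a lower bound on the length $m$ of the shortest orthogeodesic loop based on $\g$. That first paragraph is sound, including the remark that discreteness forbids distinct lifts of a simple geodesic from being asymptotic. The second paragraph, however, is only a pointer: you state the target inequality $\cosh m \ge 2\coth^2(L_S(\g)/2)-1$ (equivalently $m\ge 2w$ with $\cosh w=\coth(L_S(\g)/2)$) and name the right-angled-polygon trigonometry that should produce it, but you neither pin down the extremal configuration of lifts, nor distinguish the two topological types of the orthogeodesic (returning to the same side of $\g$ versus the opposite side, i.e.\ pair of pants versus one-holed torus), nor carry out the corresponding hexagon/quadrilateral computation. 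You flag this yourself, and it is in fact the whole mathematical content of the sharp constant. As written, the proposal is an accurate reduction to the decisive trigonometric estimate but not a self-contained proof; that is acceptable here, given that the paper too defers entirely to \cite{R}, but the acknowledged gap is precisely where a complete argument would have to do its work.
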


\begin{remark} \label{r:CollarLemma}
Along this paper,
$\g$ will denote a simple closed geodesic in $S$
and $w$ the width of the collar of $\g$, where
$\cosh w = \coth (L_S(\g)/2)$.

Denote by $C(\g,h)$ the collar of $\g$ of width $h$ and by $C(\g)$ the collar
of $\g$ of width $w$.
It is well-known that if $\g_1$ and $\g_2$ are disjoint simple closed geodesics,
then $C(\g_1) \cap C(\g_2)=\emptyset$.

For each cusp there exists a $2$-collar and $2$-collars of different cusps are disjoint.
Besides, the collar $C(\g)$ of the simple closed geodesic $\g$ does not intersect
the $2$-collar of a cusp
(see  \cite{R}, \cite{Sh} and \cite[Chapter 4]{Bu}).
\end{remark}

We will use the thick-thin decomposition of Riemann surfaces given by Margulis Lemma (see, e.g., \cite[p.107]{BGS}).
Concretely,
for any $\e< \Arcsinh 1$ any Riemann surface, $S$, can be partitioned into a thick part,
$S_{\e}:=\{z\in S : \inj(z) \ge \varepsilon \}$, and a thin part, $S\setminus S_{\e}$,
whose connected components are either collars of cusps or collars of simple closed geodesics of length less than $2\e$.
In fact, \cite[Lemma 4.9]{CGPR} gives the following.

\begin{lemma} \label{l:ts}
Let $S$ be a non-exceptional Riemann surface and $z\in S$.
If $\inj (z) < \Arcsinh 1$, then the shortest geodesic loop
$\eta$ with base point $z$  is contained either in the $2$-collar of a cusp
or in the collar $C(\g)$ of a simple closed geodesic $\g$.
\end{lemma}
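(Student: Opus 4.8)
The plan is to pass to the universal cover and reduce the statement to a computation with a single deck transformation. Write $S=\DD/\Gamma$ with $\Gamma$ a torsion-free Fuchsian group, fix a lift $\tilde z$ of $z$, and recall that $\inj(z)=\tfrac12\inf\{d(\tilde z,\phi\tilde z):\phi\in\Gamma\setminus\{\mathrm{id}\}\}$; this infimum is attained (the orbit $\Gamma\tilde z$ is discrete) by some $\phi_0$, and the shortest geodesic loop $\eta$ at $z$ is the projection of the geodesic segment $\tilde\eta=[\tilde z,\phi_0\tilde z]$, so $L(\eta)=d(\tilde z,\phi_0\tilde z)=2\inj(z)<2\Arcsinh 1$. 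Since $\Gamma$ is torsion-free, $\phi_0$ is either hyperbolic or parabolic, and it is automatically primitive (if $\phi_0=\psi^n$ with $|n|\ge2$ then $d(\tilde z,\psi\tilde z)<d(\tilde z,\phi_0\tilde z)$, since displacement is strictly increasing along powers, contradicting minimality). In the hyperbolic case the axis of $\phi_0$ projects to a closed geodesic $\g$ with $\ell_0:=L(\g)<2\Arcsinh 1$ equal to the translation length of $\phi_0$; by the Margulis/thick--thin structure a closed geodesic this short is simple, so the Collar Lemma applies to $\g$. In the parabolic case the fixed point of $\phi_0$ corresponds to a cusp $r$ of $S$, and $\phi_0$ generates the cusp subgroup. (Alternatively, one may first invoke the thick--thin decomposition to place $z$ in a thin component that is already a collar of a cusp or of a short simple closed geodesic, and then identify $\phi_0$ inside it.) In each case I will compute $d(\tilde z,\phi_0\tilde z)$ explicitly, use the smallness hypothesis to put $\tilde z$ well inside the relevant $\phi_0$-invariant region, and invoke convexity of that region to conclude that all of $\tilde\eta$ --- hence $\eta$ --- stays inside it.

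\emph{Hyperbolic case.} Normalize in $\HH$ so that the axis $A$ of $\phi_0$ is the imaginary axis and $\phi_0(z)=e^{\ell_0}z$. Writing $\rho=d(\tilde z,A)=d_S(z,\g)$, the distance formula in $\HH$ gives $\cosh d(\tilde z,\phi_0\tilde z)=1+2\sinh^2(\ell_0/2)\cosh^2\rho$. The hypothesis $d(\tilde z,\phi_0\tilde z)<2\Arcsinh 1$ is equivalent to $\cosh d(\tilde z,\phi_0\tilde z)<3$, which rearranges to $\cosh^2\rho<1/\sinh^2(\ell_0/2)\le\coth^2(\ell_0/2)=\cosh^2 w$, i.e.\ $\rho<w$. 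Thus $\tilde z$ and $\phi_0\tilde z$ both lie in the open $\rho$-neighborhood of $A$; since the distance function to a geodesic is convex on $\HH$, this neighborhood is convex, so $\tilde\eta\subset\{p:d(p,A)\le\rho\}$. Projecting down, $\eta\subset\{p\in S:d_S(p,\g)\le\rho\}\subset C(\g)$.

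\emph{Parabolic case.} Normalize so that the fixed point of $\phi_0$ is $\infty$ and $\phi_0(z)=z+1$. If $\tilde z=x_0+iy_0$, the distance formula gives $\cosh d(\tilde z,\phi_0\tilde z)=1+\tfrac{1}{2y_0^2}$, so $\cosh d(\tilde z,\phi_0\tilde z)<3$ forces $y_0>\tfrac12$. Hence $\tilde z$ and $\phi_0\tilde z$ lie on the horocycle $\{\operatorname{Im}=y_0\}$, which bounds the horoball $\{\operatorname{Im}\ge y_0\}$; horoballs are convex, so $\tilde\eta\subset\{\operatorname{Im}\ge y_0\}\subset\{\operatorname{Im}>\tfrac12\}$. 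Projecting, $\eta$ lies in the quotient of $\{\operatorname{Im}>\tfrac12\}$ by $\langle\phi_0\rangle$, whose boundary horocycle has length $2$; since $\phi_0$ generates the cusp subgroup, this quotient is exactly the $2$-collar $C(r,2)$.

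The step I expect to be the main obstacle is the bookkeeping around $\phi_0$: that it may be taken primitive, that the normalizations $\phi_0(z)=e^{\ell_0}z$ and $\phi_0(z)=z+1$ are legitimate, and --- in the hyperbolic case --- that the closed geodesic $\g$ of length $<2\Arcsinh 1$ is genuinely simple, which is needed before one may even speak of the Collar Lemma collar $C(\g)$. This is where torsion-freeness of $\Gamma$, the classification of isometries of $\HH$, and the Margulis/thick--thin structure near the thin part are really used; starting from the thick--thin decomposition is the cleanest way to dispatch all of it. Everything else is the explicit hyperbolic trigonometry above together with two standard convexity facts in $\HH^2$: convexity of horoballs and of tubular neighborhoods of geodesics.
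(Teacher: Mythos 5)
The paper does not prove this lemma in-text; it is imported from \cite[Lemma 4.9]{CGPR}, so there is no internal argument to compare against. Your proof is nonetheless correct and self-contained: lifting to the universal cover, taking a minimal-displacement (hence primitive) $\phi_0\in\G$, and reading off $\cosh L(\eta)=\cosh d(\tilde z,\phi_0\tilde z)<3$ from the Fermi-coordinate identity $\cosh d=1+2\sinh^2(\ell_0/2)\cosh^2\rho$ in the hyperbolic case and from $\cosh d=1+1/(2y_0^2)$ in the parabolic case are the right computations, and the convexity of $\rho$-tubes about geodesics and of horoballs then confines the entire lift $\tilde\eta$. Two points that you currently only gesture at should be stated explicitly: (i) the Collar Lemma of the paper is stated for \emph{simple} closed geodesics, so the classical fact that any closed geodesic of length $<2\Arcsinh 1$ is simple is load-bearing rather than a side remark; your bound $\ell_0\le 2\inj(z)<2\Arcsinh 1$ is exactly what makes it available, and you correctly identified this as the delicate step. (ii) In the parabolic case, the conclusion $\eta\subset C(r,2)$ requires that $\{\Im\zeta>1/2\}$ be precisely invariant under the cusp stabilizer, equivalently that the $2$-collar embeds in $S$; this is what the paper records in Remark \ref{r:CollarLemma}, so citing it closes the gap. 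With those two classical inputs made explicit, the argument is complete.
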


We collect below a well-known hyperbolic trigonometric formula (see, e.g., \cite[p.454]{Bu})
which will be useful.

\begin{proposition} \label{l:Bu}
The following formula holds for polygons on the unit disk (and then for simply connected polygons on any non-exceptional Riemann surface).
%
%
%

Let us consider a geodesic quadrilateral with three right angles and let $\phi$ the other angle.
If $\a,\b$ are the lengths of the sides which meet with angle $\phi$
and $a$ is the length of the opposite side to the side with length $\a$, then
$\sinh \a= \sinh a \cosh \b$.
\end{proposition}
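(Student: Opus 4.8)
The plan is to reduce the statement to a computation in the hyperbolic plane and then to standard right-triangle trigonometry. First I would note that, the polygon being simply connected, a local section of the universal covering map $\DD\to S$ carries it isometrically onto a geodesic polygon of $\DD$ with its Poincar\'e metric (geodesic sides go to geodesic sides and angles are preserved, since the projection is a local isometry); since $\DD$ has constant curvature $-1$, it therefore suffices to prove the identity for a geodesic quadrilateral in $\DD$, where the hyperbolic laws of sines and cosines are available. Label the quadrilateral $ABCD$ in cyclic order, with the three right angles at $A,B,C$ and the remaining angle $\phi$ at $D$. Then the two sides meeting at $D$ are $DA$ and $CD$, and we take $\alpha=|DA|$ and $\beta=|CD|$, so that $a=|BC|$ is exactly the side opposite the $\alpha$-side.

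Next I would draw the diagonal $BD$. Since a Lambert quadrilateral is convex, $BD$ lies inside the angle at $B$ and decomposes the quadrilateral into two geodesic triangles $ABD$ and $CBD$, right-angled at $A$ and at $C$ respectively (these are the angles $\angle DAB$ and $\angle BCD$), with $\angle ABD+\angle DBC=\angle ABC=\pi/2$, hence $\sin(\angle ABD)=\cos(\angle DBC)$. Applying the standard relations for hyperbolic right triangles (see, e.g., \cite[p.454]{Bu}): in $ABD$, whose hypotenuse is $BD$ and in which the side $DA=\alpha$ is opposite $B$, one gets $\sin(\angle ABD)=\sinh\alpha/\sinh|BD|$; in $CBD$, whose hypotenuse is $BD$ and in which the leg $CB=a$ is adjacent to $B$, one gets $\cos(\angle DBC)=\tanh a/\tanh|BD|$. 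Equating these two quantities gives $\sinh\alpha=\tanh a\,\cosh|BD|$, and the hyperbolic Pythagorean theorem in $CBD$ gives $\cosh|BD|=\cosh a\cosh\beta$. Substituting yields $\sinh\alpha=\sinh a\cosh\beta$, which is the asserted formula.

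I do not expect any serious obstacle here: the argument is entirely elementary once the reduction to $\DD$ is made. The only points that require genuine care are bookkeeping ones, namely checking that the diagonal $BD$ really does split the region into two triangles carrying the claimed right angles (this is where convexity of the quadrilateral is used) and making sure each trigonometric identity is applied to the correct vertex and to the leg or hypotenuse it refers to. The passage from $\DD$ to a general non-exceptional Riemann surface is routine, since a simply connected subdomain of $S$ is evenly covered and geodesic arcs of $S$ contained in it lift to geodesic arcs of $\DD$.
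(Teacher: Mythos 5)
Your derivation is correct: the right-triangle identities are applied accurately, the angle addition $\angle ABD+\angle DBC=\pi/2$ is justified by convexity of the Lambert quadrilateral, and combining $\sin(\angle ABD)=\sinh\alpha/\sinh|BD|$, $\cos(\angle DBC)=\tanh a/\tanh|BD|$, and $\cosh|BD|=\cosh a\cosh\beta$ indeed produces $\sinh\alpha=\sinh a\cosh\beta$. The labeling also matches the statement: with $ABCD$ in cyclic order and $\phi$ at $D$, the side $BC=a$ is the one not sharing a vertex with $DA=\alpha$, so it is the "opposite" side. The lifting argument to pass from $\DD$ to a simply connected polygon in $S$ is also sound.

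Where you differ from the paper is that the paper does not prove this proposition at all: it is stated as a "well-known hyperbolic trigonometric formula" with a bare citation to Buser's book (formula collection on p.\,454). You have instead supplied the standard derivation of that trirectangle identity from the right-triangle laws, via the diagonal decomposition of the Lambert quadrilateral. This is exactly the kind of argument one would find behind Buser's table, so there is no mathematical disagreement; the trade-off is simply self-containedness versus brevity. Your approach also makes explicit the reduction from a simply connected region of $S$ to $\DD$ via a section of the universal covering, a point the paper leaves implicit in the phrase "and then for simply connected polygons on any non-exceptional Riemann surface."
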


\begin{lemma} \label{l:0}
Let $S$ be a non-exceptional Riemann surface, $0<\e < \Arcsinh 1$ and $C$ a connected component of $\{z\in S\,| \; \inj(z)< \e \}$.
By Margulis Lemma, $C$ is a collar.

$(1)$ If $C$ is a collar of a simple closed geodesic $\g$, then $C=C(\g,h) \subset C(\g)$ and $h=\Arccosh \frac{\sinh \e}{\sinh (L_S(\g)/2)}$.

$(2)$ If $C$ is a collar of a cusp $r$, then $C=C(r,\l) \subset C(r,2)$ and $\l=2 \sinh \e$.
\end{lemma}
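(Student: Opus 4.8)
The plan is to analyze the two cases by understanding the precise shape of the thin part near a geodesic or a cusp, using the explicit trigonometric control provided by the Collar Lemma and Proposition \ref{l:Bu}.

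\emph{Case (1): collar of a simple closed geodesic $\g$.} Here the connected component $C$ is the set of points with injectivity radius less than $\e$ that collapses onto $\g$. For a point $z$ at distance $t$ from $\g$ inside the collar $C(\g)$, the shortest geodesic loop based at $z$ is freely homotopic to $\g$, and its length can be computed by a standard cut-and-develop argument: lifting $\g$ to the universal cover $\DD$ and dropping a perpendicular of length $t$ from $z$ to the axis, one obtains a geodesic quadrilateral with three right angles whose fourth side has length $\tfrac12 L_S(\g)$ and whose ``far'' side has half the loop length. Applying Proposition \ref{l:Bu} with $\a = \text{inj}(z)$, $\b = t$, and $a = L_S(\g)/2$ gives $\sinh(\text{inj}(z)) = \sinh(L_S(\g)/2)\cosh t$. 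Thus $\text{inj}(z)<\e$ is equivalent to $\cosh t < \sinh\e / \sinh(L_S(\g)/2)$, i.e. $C = C(\g,h)$ with $h = \Arccosh\big(\sinh\e / \sinh(L_S(\g)/2)\big)$, which is exactly the claim. The inclusion $C(\g,h)\subset C(\g)$ is immediate once we check $h \le w$, equivalently $\cosh h \le \cosh w = \coth(L_S(\g)/2)$; this follows because $\sinh\e < 1$ (as $\e < \Arcsinh 1$) and $\sinh\e/\sinh(L_S(\g)/2) < 1/\sinh(L_S(\g)/2) \le \cosh(L_S(\g)/2)/\sinh(L_S(\g)/2)$.

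\emph{Case (2): collar of a cusp $r$.} Now $C$ collapses onto the cusp. Model the cusp region in the upper half-plane by the quotient of $\{\,\text{Im}\,w > y_0\,\}$ by the translation $w\mapsto w+1$; the horocycle at height $y$ descends to a curve of length $1/y$ bounding a collar of area $1/y$. A point $z$ on this horocycle has shortest geodesic loop (the image of the unit horizontal segment) of hyperbolic length $\ell$ with $\sinh(\ell/2) = 1/(2y)$ (the standard formula for the translation length along a horocycle; equivalently $2\sinh(\text{inj}(z)) = $ Euclidean length of the loop $= 1/y$). Hence $\text{inj}(z) < \e$ iff $1/y < 2\sinh\e$, i.e. the collar $C$ is the region of area less than $2\sinh\e$; that is, $C = C(r,\l)$ with $\l = 2\sinh\e$. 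Since $\e < \Arcsinh 1$ gives $2\sinh\e < 2$, we get $C(r,\l)\subset C(r,2)$.

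\emph{Main obstacle.} The only genuinely delicate point is pinning down, in each case, the exact relation between $\text{inj}(z)$ and the geometric parameter (distance $t$ to $\g$, or horocycle area), rather than just a two-sided estimate. Lemma \ref{l:ts} already tells us \emph{which} loop realizes $\text{inj}(z)$, so the remaining work is the exact trigonometric identity. For the geodesic case this is a clean application of Proposition \ref{l:Bu} to the right-angled quadrilateral; for the cusp case one must set up the half-plane model carefully and invoke the known equality ``length of boundary curve $=$ area of collar'' recorded in Remark \ref{r:CollarLemma}. Both computations are routine once the configuration is drawn correctly, so I expect no serious difficulty beyond bookkeeping.
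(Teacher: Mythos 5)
Your proof is correct and takes essentially the same approach as the paper: both rely on Lemma \ref{l:ts} to identify the relevant loop, apply Proposition \ref{l:Bu} to the right-angled quadrilateral in the geodesic case, and compute in the upper half-plane model for the cusp. The only cosmetic difference is that you parametrize by the distance $t$ to $\g$ (resp.\ the horocycle height $y$) and read off the sublevel set $\{\inj < \e\}$, whereas the paper just evaluates at a boundary point $z\in\p C$ where $\inj(z)=\e$; the underlying trigonometric identity and the verification $h<w$ are identical.
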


\begin{proof}
Assume first that $C=C(\g,h)$ is a collar of a simple closed geodesic $\g$.
Fix $z \in \p C$.
Since $\inj (z) = \e < \Arcsinh 1$, Lemma \ref{l:ts} gives that the shortest geodesic loop
$\s$ with base point $z$ is contained in $C(\g)$.
Let $l:=L_S(\g)$.
By Proposition \ref{l:Bu}, we have
$$
\sinh \e
= \sinh \inj(z)
= \sinh (l/2) \cosh h,
\qquad
h
= \Arccosh \frac{\sinh \e}{\sinh (l/2)} \,.
$$
Collar Lemma gives that there exists a collar about $\g$ of width $w$, where $\cosh w = \coth (l/2)$.
Thus,
$$
\cosh h
= \frac{\sinh \e}{\sinh (l/2)}
< \frac{1}{\sinh (l/2)}
< \frac{\cosh (l/2)}{\sinh (l/2)}
= \cosh w ,
$$
$h<w$ and $C$ is contained in $C(\g)$.

Assume now that $C=C(r,\l)$ is a collar of a cusp $r$.
Fix $z \in \p C$.
Since $\inj (z) = \e < \Arcsinh 1$, Lemma \ref{l:ts} gives that the shortest geodesic loop
$\s$ with base point $z$ is contained in $C(r,2)$.

As usual, consider a fundamental domain for $S$ in the upper half-plane $\HH$
contained in $\{\zeta\in\HH: \, 0\le \Re \zeta \le 1\}$
and such that $\{\zeta\in\HH: \, 0\le \Re \zeta \le 1,\,\Im \zeta > 1/2\}$
corresponds to $C(r,2)$.
Thus, $\{\zeta\in\HH: \, 0\le \Re \zeta \le 1,\,\Im \zeta > 1/\l\}$
corresponds to $C(r,\l)$.
Without loss of generality we can assume that $i/\l$ corresponds to $z$.
Thus, we can represent $\s$ in the upper half-plane by means of a
geodesic with endpoints $i/\l$ and $1+i/\l$.
We have
$$
\sinh \e
= \sinh \inj(z)
= \sinh \frac{L_S(\s)}2
= \sinh \frac{d_\HH(i/\l,1+i/\l)}2
= \frac{\l}{2}\,,
$$
$\l = 2 \sinh \e<2$ and $C$ is contained in $C(r,2)$.
\end{proof}

\begin{lemma} \label{l:dc}
Let $S$ be a non-exceptional Riemann surface, $0<\e < \Arcsinh 1$ and $C$ a connected component of $\{z\in S\,| \; \inj(z)< \e \}$.
If $\eta$ is a connected component of $\p C$, then $L_S(\eta) \le 2 \sinh \e < 2$.
\end{lemma}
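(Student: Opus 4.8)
The plan is to feed $C$ into Lemma \ref{l:0}, which describes $C$ completely, and then dispatch the cusp case and the closed-geodesic case separately. If $C=C(r,\l)$ is a collar of a cusp $r$, then by Lemma \ref{l:0}$(2)$ we have $\l=2\sinh\e$, the set $\p C$ is a single Jordan curve $\eta$, and the length of the boundary curve of a cusp collar equals its area (recalled in the discussion preceding Lemma \ref{l:CollarLemma}, see \cite{Be}); hence $L_S(\eta)=\l=2\sinh\e$, and $2\sinh\e<2$ because $\e<\Arcsinh 1$. So this case is immediate.

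The substantive case is $C=C(\g,h)$, a collar of a simple closed geodesic $\g$; write $l:=L_S(\g)$. The key geometric input I would use is the standard formula for the length of a boundary curve of such a collar: each component $\eta$ of $\p C$ is an equidistant curve at distance $h$ from $\g$, and working in Fermi coordinates about $\g$ — in which the hyperbolic metric reads $d\rho^2+\cosh^2\rho\,dt^2$, with $\rho$ the signed distance to $\g$ and $t$ an arc-length parameter along $\g$ — integration along $\rho=h$ gives $L_S(\eta)=l\cosh h$. (Equivalently, one can lift to $\HH$, put $\g$ on the imaginary axis with deck transformation $z\mapsto e^l z$, and compute the displacement along the Euclidean ray representing $\eta$; this stays close to the bookkeeping already done in Lemma \ref{l:0}.) By Lemma \ref{l:0}$(1)$ we have $\cosh h=\sinh\e/\sinh(l/2)$, so
$$
L_S(\eta)=l\cosh h=\frac{l\,\sinh\e}{\sinh(l/2)}.
$$
Now $\sinh x\ge x$ for every $x\ge 0$ gives $\sinh(l/2)\ge l/2$, hence $l/\sinh(l/2)\le 2$ and $L_S(\eta)\le 2\sinh\e<2$, as desired.

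I expect the only step requiring care to be the justification of $L_S(\eta)=l\cosh h$; depending on the level of detail wanted, this can simply be cited as a well-known fact about collars (implicit in the Collar Lemma apparatus of \cite{R}, \cite{Bu}) or proved in two lines by either of the computations indicated above. Everything else — the split into the two cases, the substitution from Lemma \ref{l:0}, and the elementary inequality $\sinh x\ge x$ — is routine.
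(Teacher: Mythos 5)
Your proof is correct and follows essentially the same route as the paper's: split by Lemma \ref{l:0} into the cusp case (immediate, since length of a cusp-collar boundary equals its area) and the simple-closed-geodesic case, and in the latter use $L_S(\eta)=l\cosh h=l\sinh\e/\sinh(l/2)$ together with an elementary estimate on $l/\sinh(l/2)$. The only cosmetic difference is that you close with the one-line inequality $\sinh x\ge x$, whereas the paper establishes that $t\mapsto t/\sinh t$ is strictly decreasing (which also yields the strict bound $L_S(\eta)<2\sinh\e$ and is reused in later lemmas); both give the stated conclusion.
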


\begin{proof}
By Margulis Lemma, $C$ is a collar.

Assume first that $C$ is a collar of a simple closed geodesic $\g$.
Lemma \ref{l:0} gives that $C=C(\g,h) \subset C(\g)$ and $h=\Arccosh \frac{\sinh \e}{\sinh (l/2)}$ with $l=L_S(\g)$.
We have
$$
L_S(\eta)
= l \cosh h
= l \, \frac{\sinh \e}{\sinh (l/2)} \,.
$$
Since the function $f(t)=t-\tanh t$ satisfies $f'(t)=1-(\cosh t)^{-2} > 0$ for every $t > 0$, we conclude $f(t) > f(0) = 0$, $t > \tanh t$ and $t\cosh t > \sinh t$ for every $t > 0$.
Since the function $g(t)=t/\sinh t$ satisfies
$$
g'(t)=\frac{\sinh t - t\cosh t}{\sinh\!^2 t} < 0
$$
for every $t > 0$, the function $g$ is decreasing on $(0,\infty)$.
Consequently, $g(t) < \lim_{x\to 0} g(x) = 1$ for every $t > 0$, and
$$
L_S(\eta)
= l \, \frac{\sinh \e}{\sinh (l/2)}
< 2 \sinh \e
< 2 .
$$

Assume now that $C$ is a collar of a cusp $r$.
Lemma \ref{l:0} gives that $C=C(r,\l) \subset C(r,2)$ and $\l=2 \sinh \e$.
Thus, $L_S(\eta) = L_S(\p C) = \l = 2 \sinh \e < 2$.
\end{proof}

\begin{lemma} \label{l:area}
Let $S$ be a non-exceptional Riemann surface, $0<\e < \Arcsinh 1$ and $C$ a connected component of $\{z\in S\,| \; \inj(z)< \e \}$ that is a collar of a simple closed geodesic $\g$.
Then
$$
A_S( C) = 2 L_S(\g) \, \sqrt{\frac{\sinh\!^2 \e}{\sinh\!^2 (L_S(\g)/2)}-1}
< 4 \sinh \e.
$$
Furthermore, if $L_S(\g)/2 \le \d <\e$, then
$$
A_S( C)
\ge \, \frac{4 \d}{\sinh \d} \, \sqrt{\sinh\!^2 \e-\sinh\!^2 \d} \,.
$$
\end{lemma}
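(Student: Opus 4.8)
The plan is to compute $A_S(C)$ exactly from the description of $C$ as a collar, and then read off both inequalities from two elementary facts already isolated in the proof of Lemma~\ref{l:dc}: that $t\mapsto t/\sinh t$ is positive and decreasing on $(0,\infty)$, and that $\sinh$ is increasing.

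First I would invoke Lemma~\ref{l:0}$(1)$: since $C$ is a connected component of $\{z\in S:\inj(z)<\e\}$ which is a collar of the simple closed geodesic $\g$, we have $C=C(\g,h)\subset C(\g)$ with $\cosh h=\frac{\sinh\e}{\sinh(l/2)}$, where $l:=L_S(\g)$; since $C$ is a non-degenerate collar, $\cosh h>1$ and hence $0<l/2<\e$. Next I would use Fermi coordinates $(t,\rho)$ along $\g$ — with $t$ an arclength parameter on $\g$ and $\rho\in(-h,h)$ the signed distance to $\g$ — in which the Poincar\'e metric (curvature $-1$) takes the standard form $d\rho^{2}+\cosh^{2}\!\rho\,dt^{2}$, so that
$$A_S(C)=\int_{0}^{l}\!\!\int_{-h}^{h}\cosh\rho\,d\rho\,dt=2\,l\,\sinh h .$$
Because $\sinh h=\sqrt{\cosh^{2}h-1}=\dfrac{\sqrt{\sinh^{2}\e-\sinh^{2}(l/2)}}{\sinh(l/2)}$, this is exactly the claimed identity $A_S(C)=2L_S(\g)\sqrt{\tfrac{\sinh^{2}\e}{\sinh^{2}(L_S(\g)/2)}-1}$, and it rewrites as $A_S(C)=\dfrac{4\,(l/2)}{\sinh(l/2)}\sqrt{\sinh^{2}\e-\sinh^{2}(l/2)}$.

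From this last form the two bounds follow quickly. For the upper bound, $\sqrt{\sinh^{2}\e-\sinh^{2}(l/2)}<\sinh\e$ and $\frac{t}{\sinh t}<1$ for all $t>0$ (proof of Lemma~\ref{l:dc}) applied to $t=l/2$ give $A_S(C)<4\sinh\e$. For the lower bound, set $\varphi(t):=\frac{t}{\sinh t}\sqrt{\sinh^{2}\e-\sinh^{2}t}$ for $t\in(0,\e)$, so $A_S(C)=4\varphi(l/2)$; on $(0,\e)$ the factor $t/\sinh t$ is positive and decreasing (proof of Lemma~\ref{l:dc}) while $\sqrt{\sinh^{2}\e-\sinh^{2}t}$ is positive and decreasing (as $\sinh$ is increasing), so $\varphi$ is decreasing. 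Since $0<l/2\le\d<\e$, this yields $\varphi(l/2)\ge\varphi(\d)$, i.e. $A_S(C)\ge\frac{4\d}{\sinh\d}\sqrt{\sinh^{2}\e-\sinh^{2}\d}$.

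The only step requiring genuine care is the first one: pinning down $C$ as the collar $C(\g,h)$ with that precise $h$ (Lemma~\ref{l:0}) and the Fermi-coordinate computation giving $A_S(C)=2l\sinh h$. Everything after that is the bookkeeping above together with the monotonicity statements already recorded in the proof of Lemma~\ref{l:dc}.
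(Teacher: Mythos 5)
Your proof is correct and follows essentially the same route as the paper's: invoke Lemma~\ref{l:0} to identify $C=C(\g,h)$, use the collar area formula $A_S(C)=2L_S(\g)\sinh h$ (which the paper simply labels well-known, whereas you helpfully derive it from Fermi coordinates), and deduce both bounds from the monotonicity of $t\mapsto t/\sinh t$. The only cosmetic difference is that you spell out the monotonicity of $\varphi(t)=\frac{t}{\sinh t}\sqrt{\sinh^2\e-\sinh^2 t}$ for the lower bound, which the paper leaves implicit.
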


\begin{proof}
Let $l:=L_S(\g)$.
Lemma \ref{l:0} gives $C=C(\g,h) \subset C(\g)$ and $h=\Arccosh \frac{\sinh \e}{\sinh (l/2)}$.
It is well-known that $A_S( C) = 2 l \sinh h$, and so,
$$
\begin{aligned}
A_S( C)
& = 2 l \sinh h
= 2 l \, \sqrt{\cosh\!^2 h -1 }
= 2 l \, \sqrt{\frac{\sinh\!^2 \e}{\sinh\!^2 (l/2)}-1}
\\
& = \frac{2 l}{\sinh (l/2)} \, \sqrt{\sinh\!^2 \e-\sinh\!^2 (l/2)}
\,.
\end{aligned}
$$
Since the function $g(t)=t/\sinh t$ is decreasing on $(0,\infty)$, we have
$$
A_S( C)
< 4 \, \sqrt{\sinh\!^2 \e-\sinh\!^2 (l/2)}
< 4 \sinh \e .
$$
If $l/2 \le \d <\e$, then
$$
A_S( C)
\ge \frac{4 \d}{\sinh \d} \, \sqrt{\sinh\!^2 \e-\sinh\!^2 \d}
\,.
$$
\end{proof}

\begin{lemma} \label{l:area2}
Let $S$ be a non-exceptional Riemann surface, $0 < \e < \Arcsinh 1$ and $C$ a connected component of $\{z\in S\,| \; \inj(z)< \e \}$ that is a collar of a simple closed geodesic $\g$ with
$$
\d_0 \le \log\frac43 \,,
\qquad
L_S(\g)\le 2 \Arcsinh \Big( \frac{\sqrt3}{4}\,\sinh \e \Big) =: 2\d ,
\qquad
h=\Arccosh \frac{\sinh \e}{\sinh (L_S(\g)/2)}\,.
$$
Then, $\d_0 < h$ and
$$
A_S \big( C(\g,h-\d_0)\big)
>  \frac{1}{2}A_S \big(C(\g,h)\big)
\ge \frac{2\d}{\sinh \d} \, \sqrt{\sinh\!^2 \e-\sinh\!^2 \d} \,.
$$
\end{lemma}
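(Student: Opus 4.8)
The plan is to reduce the statement to the explicit area formula for collars about a simple closed geodesic and then check two scalar inequalities. Write $l:=L_S(\g)$. By Lemma \ref{l:0}, $C=C(\g,h)\subset C(\g)$, with $\cosh h=\sinh\e/\sinh(l/2)$ and $h\le w$; and the same well-known fact used in the proof of Lemma \ref{l:area} gives that a collar $C(\g,t)$ of width $t\in(0,w]$ about $\g$ has area $2l\sinh t$. So, once I know $h-\d_0>0$, the first inequality to be proved, $A_S(C(\g,h-\d_0))>\tfrac12 A_S(C(\g,h))$, is exactly the scalar inequality $\sinh(h-\d_0)>\tfrac12\sinh h$. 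For the final inequality I will simply invoke the lower bound in Lemma \ref{l:area}: the hypothesis gives $l/2\le\d$, and $\d=\Arcsinh(\tfrac{\sqrt3}{4}\sinh\e)<\Arcsinh(\sinh\e)=\e$ because $\tfrac{\sqrt3}{4}<1$, so Lemma \ref{l:area} yields $A_S(C)\ge\frac{4\d}{\sinh\d}\sqrt{\sinh^2\e-\sinh^2\d}$, and halving gives precisely $\tfrac12 A_S(C(\g,h))\ge\frac{2\d}{\sinh\d}\sqrt{\sinh^2\e-\sinh^2\d}$.

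Thus the work is in proving $\d_0<h$ and $\sinh(h-\d_0)>\tfrac12\sinh h$, and this is where the constants $\tfrac{\sqrt3}{4}$ and $\log\tfrac43$ are needed. First I would rewrite the hypothesis $L_S(\g)\le2\d$ as $\sinh(l/2)\le\tfrac{\sqrt3}{4}\sinh\e$, which forces $\cosh h=\sinh\e/\sinh(l/2)\ge\tfrac{4}{\sqrt3}$. By monotonicity of $\cosh$, $h\ge\Arccosh\tfrac{4}{\sqrt3}$, and since $\cosh(\log\tfrac43)=\tfrac{25}{24}<\tfrac{4}{\sqrt3}$ (equivalently $1875<9216$) we get $h>\log\tfrac43\ge\d_0$; this proves $\d_0<h$, in particular $h-\d_0>0$. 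The same bound $\cosh h\ge\tfrac{4}{\sqrt3}$ gives, via the decreasing function $x\mapsto x/\sqrt{x^2-1}$ on $(1,\infty)$, the estimate $\coth h=\cosh h/\sqrt{\cosh^2 h-1}\le\tfrac{4/\sqrt3}{\sqrt{13/3}}=\tfrac{4}{\sqrt{13}}$.

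Next I would expand $\sinh(h-\d_0)=\sinh h\cosh\d_0-\cosh h\sinh\d_0\ge\sinh h-\tfrac{7}{24}\cosh h$, using $\cosh\d_0\ge1$ and $\sinh\d_0\le\sinh(\log\tfrac43)=\tfrac{7}{24}$ together with $\sinh h>0$ and $\cosh h>0$. Dividing by $\sinh h$ gives $\sinh(h-\d_0)/\sinh h\ge1-\tfrac{7}{24}\coth h\ge1-\tfrac{7}{24}\cdot\tfrac{4}{\sqrt{13}}=1-\tfrac{7}{6\sqrt{13}}$, which exceeds $\tfrac12$ since $\tfrac{7}{6\sqrt{13}}<\tfrac12$ reduces to $49<117$. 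Hence $\sinh(h-\d_0)>\tfrac12\sinh h$, so $A_S(C(\g,h-\d_0))=2l\sinh(h-\d_0)>\tfrac12\cdot2l\sinh h=\tfrac12 A_S(C(\g,h))$, which chained with the bound from the first paragraph finishes the proof.

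I do not expect a genuine obstacle; the argument is pure bookkeeping in hyperbolic trigonometry. The two points that need care are: confirming the collar-area formula $2l\sinh t$ is applied only to widths in $(0,w]$ (it is, since $0<h-\d_0<h\le w$); and tracking which constant does what — $\tfrac{\sqrt3}{4}$ is exactly the factor that makes $\cosh h\ge\tfrac{4}{\sqrt3}$ and hence $\coth h\le\tfrac{4}{\sqrt{13}}$, while $\d_0\le\log\tfrac43$ is exactly what keeps $\sinh\d_0\le\tfrac{7}{24}$, so that the product $\coth h\,\sinh\d_0$ stays below $\tfrac12$ with room to spare.
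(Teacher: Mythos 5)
Your proof is correct, and it takes a mildly different computational route from the paper's to establish the key scalar inequality $\sinh(h-\d_0)>\tfrac12\sinh h$. The paper works entirely with exponentials: from $\d_0\le\log\tfrac43<2h-\log 4$ it deduces $e^{-h'}<e^h/4$, so $e^{h'}-e^{-h'}>e^{h'}-e^h/4$, and separately from $e^{h'}\ge\tfrac34e^h$ it gets $2\bigl(e^{h'}-e^h/4\bigr)\ge e^h>e^h-e^{-h}$, which chains to $\sinh h'>\tfrac12\sinh h$. You instead expand $\sinh(h-\d_0)=\sinh h\cosh\d_0-\cosh h\sinh\d_0$, discard $\cosh\d_0\ge 1$, bound $\sinh\d_0\le\tfrac{7}{24}$, and then exploit the hypothesis $\cosh h\ge\tfrac{4}{\sqrt3}$ together with the monotonicity of $x\mapsto x/\sqrt{x^2-1}$ to cap $\coth h\le\tfrac{4}{\sqrt{13}}$, reducing everything to $\tfrac{7}{6\sqrt{13}}<\tfrac12$. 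Both are elementary bookkeeping in hyperbolic trigonometry of comparable length; your version isolates the role of the two constants more transparently (one controls $\coth h$, the other controls $\sinh\d_0$, and the proof closes because their product is well under $\tfrac12$), while the paper's exponential manipulations avoid introducing $\coth$ at all. The treatment of $\d_0<h$, the appeal to the collar-area formula $A_S(C(\g,t))=2L_S(\g)\sinh t$ for $0<t\le w$, and the final invocation of the lower bound from Lemma \ref{l:area} are the same in both. All your arithmetic checks out ($\cosh\log\tfrac43=\tfrac{25}{24}$, $\sinh\log\tfrac43=\tfrac{7}{24}$, $\tfrac{25}{24}<\tfrac{4}{\sqrt3}$, $49<117$), so the proposal is a valid proof.
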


\begin{proof}
By Lemma \ref{l:0} we have $C=C(\g,h)$.
Define $h'=h-\d_0$.
We also have
$$
\begin{aligned}
& L_S(\g) \le 2 \Arcsinh \Big( \frac{\sqrt3}{4}\,\sinh \e \Big)
\quad \Leftrightarrow \quad
\frac{4}{\sqrt3}\le \frac{\sinh \e}{\sinh (L_S(\g)/2)}
\\
& \quad \Leftrightarrow \quad
\log \frac{16}{3} \le 2 \log \frac{\sinh \e}{\sinh (L_S(\g)/2)}
\quad \Rightarrow \quad
\log \frac{16}{3} < 2 \Arccosh \frac{\sinh \e}{\sinh (L_S(\g)/2)} = 2 h
\\
& \quad \Rightarrow \quad
\d_0 \le \log\frac43 < 2h- \log{4}
\quad \Rightarrow \quad
e^{\d_0} <\frac{e^{2h}}{4}
\\
& \quad \Leftrightarrow \quad
e^{-h'}=e^{\d_0 -h}<\frac{e^h}{4}
\quad \Rightarrow \quad
e^{h'}-e^{-h'}> e^{h'} - \frac{e^h}{4}\,.
\end{aligned}
$$
Besides,
$\d_0 \le \frac12 \log\frac{16}{9} < \frac12 \log\frac{16}{3} < h$,
and $h'=h-\d_0$.
We have
$$
\begin{aligned}
& \d_0 \le \log \frac{4}{3}
\quad \Leftrightarrow \quad
\frac{e^h}{e^{\d_0}} \ge \frac{3}{4}\, e^h
\\
& \quad \Leftrightarrow \quad
e^{h'} \ge \frac{3}{4}\, e^h
\quad \Rightarrow \quad
2\Big(e^{h'}-  \frac{e^h}{4}\Big) \ge e^h > e^h-e^{-h}.
\end{aligned}
$$
Denote by $C'$ the collar $C'=C(\g,h')$.
Since $A_S(C')=2L_S(\g)\sinh h'$ and $A_S(C)=2L_S(\g)\sinh h$, we conclude
$$
\begin{aligned}
A_S(C') & =2L_S(\g)\sinh h'
= L_S(\g) \big( e^{h'}-e^{-h'} \big)
> L_S(\g) \Big(e^{h'}-  \frac{e^h}{4}\Big)
\\
& > \frac12\, L_S(\g) \big( e^{h}-e^{-h} \big)
= \frac12\, 2L_S(\g)\sinh h
= \frac12\, A_S(C).
\end{aligned}
$$
\indent
Finally, by Lemma \ref{l:area}, since
$$
L_S(\g)/2 \le \d = \Arcsinh \Big( \frac{\sqrt3}{4}\,\sinh \e \Big) < \e ,
$$
we have
$$A_S(C') > \frac{1}{2}A_S (C)\ge \frac{2\d}{\sinh \d} \, \sqrt{\sinh\!^2 \e-\sinh\!^2 \d}\,.$$
\end{proof}

%

\begin{lemma} \label{l:us}
Let $S$ be a non-exceptional Riemann surface, $0<\e < \Arcsinh 1$ and $C_1,C_2$ two different connected components of $\{z\in S\,| \; \inj(z)< \e \}$.
Let $K_1$ be the collar given by the Collar Lemma corresponding to $C_1$.
Then
$$
d_S(C_1,\p K_1) \ge \log \frac1{\sinh \e}\,,
\qquad
d_S(C_1,C_2) \ge 2\log \frac1{\sinh \e}\,.
$$
Furthermore, if $C_1$ is a collar of a simple closed geodesic $\g$, then
$d_S(C_1,\p K_1)$ is an increasing function on $L_S(\g)$.
\end{lemma}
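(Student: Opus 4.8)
The plan is to split according to the two possibilities for $C_1$ coming from Margulis Lemma: a collar of a simple closed geodesic $\g$, or a collar of a cusp $r$. In both cases I will first prove the single-collar estimate $d_S(C_1,\p K_1)\ge \log\frac1{\sinh\e}$, and then deduce the two-collar estimate from it by a first-crossing argument that exploits $K_1\cap K_2=\emptyset$.

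For the single-collar estimate, suppose first $C_1=C(\g,h)$ and put $l:=L_S(\g)$. By Lemma \ref{l:0}, $C_1\subset K_1=C(\g)=C(\g,w)$ with $\cosh h=\sinh\e/\sinh(l/2)$ (so $\sinh(l/2)<\sinh\e$) and $\cosh w=\coth(l/2)$. Since $C(\g,h)$ and $C(\g,w)$ are concentric collars about $\g$, a Fermi-coordinate argument gives $d_S(C_1,\p K_1)=w-h$: a competing path must first meet $\p K_1$, so one may assume it stays inside $\overline{C(\g)}$, where distances across the collar are governed by the geodesics orthogonal to $\g$, and there the distance-to-$\g$ coordinate increases from $\le h$ on $\overline{C_1}$ to $w$ on $\p K_1$, forcing length $\ge w-h$. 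Now $e^{w}=\cosh w+\sinh w=\dfrac{\cosh(l/2)+1}{\sinh(l/2)}$ and $e^{h}=\dfrac{\sinh\e+\sqrt{\sinh^2\e-\sinh^2(l/2)}}{\sinh(l/2)}$, so $e^{w-h}\ge 1/\sinh\e$ is equivalent to $\sinh\e\cosh(l/2)\ge\sqrt{\sinh^2\e-\sinh^2(l/2)}$ and, after squaring, to $\sinh^2\e\sinh^2(l/2)+\sinh^2(l/2)\ge 0$, which always holds; hence $d_S(C_1,\p K_1)=w-h\ge\log\frac1{\sinh\e}$. Suppose now $C_1=C(r,\l)$. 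By Lemma \ref{l:0}, $\l=2\sinh\e$ and $K_1=C(r,2)$; representing $S$ in the upper half-plane as in the proof of Lemma \ref{l:0}, $\p C_1$ and $\p K_1$ lift to the horocycles $\{\Im\zeta=1/\l\}$ and $\{\Im\zeta=1/2\}$, and since the distance between two such horocycles is realized by the vertical geodesics, $d_S(C_1,\p K_1)=\log\frac{1/\l}{1/2}=\log\frac2\l=\log\frac1{\sinh\e}$.

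For the two-collar estimate, Remark \ref{r:CollarLemma} gives $K_1\cap K_2=\emptyset$ in each of the three configurations (collars of disjoint geodesics; $2$-collars of distinct cusps; a collar $C(\g)$ against a $2$-collar of a cusp), hence also $\overline{K_1}\cap K_2=\overline{K_2}\cap K_1=\emptyset$ because the $K_i$ are open. Let $c\colon[0,1]\to S$ be any rectifiable path with $c(0)\in C_1$, $c(1)\in C_2$; since $c(1)\notin\overline{K_1}$ and $c(0)\notin\overline{K_2}$, the parameters $t_1:=\min\{t:\ c(t)\in\p K_1\}$ and $t_2:=\max\{t:\ c(t)\in\p K_2\}$ are well defined, and connectedness forces $c([0,t_1])\subset\overline{K_1}$ and $c([t_2,1])\subset\overline{K_2}$. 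As $c(t_1)\in\overline{K_1}$ is disjoint from $K_2$, we get $t_1\le t_2$, so $\mathrm{length}(c)\ge\mathrm{length}(c|_{[0,t_1]})+\mathrm{length}(c|_{[t_2,1]})\ge d_S(C_1,\p K_1)+d_S(C_2,\p K_2)\ge 2\log\frac1{\sinh\e}$, the middle inequality using $c(0)\in C_1$, $c(t_1)\in\p K_1$, $c(t_2)\in\p K_2$, $c(1)\in C_2$, and the last one the single-collar estimate for both $C_1$ and $C_2$. Taking the infimum over $c$ gives $d_S(C_1,C_2)\ge 2\log\frac1{\sinh\e}$.

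Finally, for the monotonicity statement: when $C_1=C(\g,h)$ the computation above yields $d_S(C_1,\p K_1)=w-h=\log\dfrac{\cosh(l/2)+1}{\sinh\e+\sqrt{\sinh^2\e-\sinh^2(l/2)}}$ for $l=L_S(\g)$ in the admissible range $(0,2\e)$; as $L_S(\g)$ increases, the numerator $\cosh(l/2)+1$ increases and the denominator $\sinh\e+\sqrt{\sinh^2\e-\sinh^2(l/2)}$ decreases, so $d_S(C_1,\p K_1)$ is increasing in $L_S(\g)$. The step I expect to demand the most care is the identity $d_S(C_1,\p K_1)=w-h$ in the geodesic case, i.e.\ ruling out shortcuts that exit $K_1$; this is handled by the same first-crossing observation used for $d_S(C_1,C_2)$, together with the fact that $C(\g)$ is embedded, and everything else is routine hyperbolic trigonometry.
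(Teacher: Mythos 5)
Your proof is correct and follows the same overall structure as the paper's (splitting into geodesic-collar and cusp-collar cases, identifying $d_S(C_1,\p K_1)=w-h$ as a distance between concentric collars in the first case, then using disjointness of the Collar Lemma collars for the two-collar bound). The difference is in how the key lower bound and the monotonicity are established in the geodesic case. The paper writes $w-h=\Arccosh\frac{\cosh t}{\sinh t}-\Arccosh\frac{\sinh\e}{\sinh t}=:f(t)$ with $t=L_S(\g)/2$, verifies $f'(t)>0$ by calculus, and then evaluates $\lim_{t\to 0^+}f(t)=\log\frac1{\sinh\e}$ via the identity $\Arccosh x-\Arccosh y=\Arccosh(xy-\sqrt{x^2-1}\sqrt{y^2-1})$. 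You instead compute $e^w=\frac{\cosh(l/2)+1}{\sinh(l/2)}$ and $e^h=\frac{\sinh\e+\sqrt{\sinh^2\e-\sinh^2(l/2)}}{\sinh(l/2)}$ directly, so that $w-h=\log\frac{\cosh(l/2)+1}{\sinh\e+\sqrt{\sinh^2\e-\sinh^2(l/2)}}$; from this the inequality $e^{w-h}\ge 1/\sinh\e$ reduces by squaring to $\sinh^2\e\,\sinh^2(l/2)+\sinh^2(l/2)\ge 0$, and the monotonicity is read off from the numerator increasing and the denominator decreasing in $l$. This is calculus-free and arguably cleaner. For the two-collar bound, the paper simply invokes pairwise disjointness of the collars from Remark \ref{r:CollarLemma}; your first-crossing argument spells out why disjointness gives the additivity of the two estimates, which is a useful elaboration of a step the paper takes for granted. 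Both routes are sound.
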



\begin{proof}
By Margulis Lemma, $C_1$ and $C_2$ are collars.

Assume that $C_1$ is a collar $C(\g,h)$ for some simple closed geodesic $\g$.
Collar Lemma gives that there exists a collar $C(\g)=K_1$ about $\g$ of width $w$, where $\cosh w = \coth (L_S(\g)/2)$.
Lemma \ref{l:0} gives that $C_1$ is contained in $K_1$ and
$$
d_S( C_1, \p K_1)
= d_S(\p C_1, \p C(\g))
= w - h
= \Arccosh \frac{\cosh (L_S(\g)/2)}{\sinh (L_S(\g)/2)} - \Arccosh \frac{\sinh \e}{\sinh (L_S(\g)/2)}
\,.
$$

If $0 < t < \e$, then
$$
\begin{aligned}
\sinh\!^2 \e \cosh\!^2 t
& > \sinh\!^2 \e - \sinh\!^2 t,
\qquad
\frac{\sinh \e \cosh t}{\sqrt{\sinh\!^2 \e - \sinh\!^2 t}} > 1,
\\
& \frac{\sinh \e \cosh t}{\sinh t \, \sqrt{\sinh\!^2 \e - \sinh\!^2 t}} - \frac{1}{\sinh t}
> 0.
\end{aligned}
$$
If $0 < t < \e$, then we define the function
$$
f(t)
= \Arccosh \frac{\cosh t}{\sinh t} - \Arccosh \frac{\sinh \e}{\sinh t}
\,.
$$
Since
$$
f'(t)
= \frac{\sinh \e \cosh t}{\sinh t \, \sqrt{\sinh\!^2 \e - \sinh\!^2 t}} - \frac{1}{\sinh t}
>0,
$$
$d_S(C_1,\p K_1)$ is an increasing function in $L_S(\g)$.
In particular, $f(t)> \lim_{x \to 0^+} f(x)$ for every $0 < t < \e$.
Since
$$
\Arccosh x - \Arccosh y
= \Arccosh \big( xy - \sqrt{x^2-1} \sqrt{y^2-1} \,\big)
,
$$
we consider
$$
\begin{aligned}
\frac{\cosh t}{\sinh t} \frac{\sinh \e}{\sinh t}
& - \sqrt{\frac{\cosh\!^2 t}{\sinh\!^2 t}-1} \, \sqrt{\frac{\sinh\!^2 \e}{\sinh\!^2 t}-1}
\\
& = \frac{\cosh t \sinh \e - \sqrt{\sinh\!^2 \e-\sinh\!^2 t}}{\sinh\!^2 t}
\, \frac{\cosh t \sinh \e + \sqrt{\sinh\!^2 \e-\sinh\!^2 t}}{\cosh t \sinh \e + \sqrt{\sinh\!^2 \e-\sinh\!^2 t}}
\\
& = \frac{\sinh\!^2 \e + 1}{\cosh t \sinh \e + \sqrt{\sinh\!^2 \e-\sinh\!^2 t}}
\,.
\end{aligned}
$$
Thus,
$$
\begin{aligned}
\lim_{t \to 0^+} & \Big( \,\frac{\cosh t}{\sinh t} \frac{\sinh \e}{\sinh t}
- \sqrt{\frac{\cosh\!^2 t}{\sinh\!^2 t}-1} \, \sqrt{\frac{\sinh\!^2 \e}{\sinh\!^2 t}-1}\; \Big)
\\
& = \lim_{t \to 0^+} \frac{\sinh\!^2 \e + 1}{\cosh t \sinh \e + \sqrt{\sinh\!^2 \e-\sinh\!^2 t}}
= \frac{\sinh\!^2 \e + 1}{2 \sinh \e}
\,,
\\
\lim_{t \to 0^+} f(t)
& = \Arccosh \frac{\sinh\!^2 \e + 1}{2 \sinh \e}
= \log \Big( \,\frac{\sinh\!^2 \e + 1}{2 \sinh \e} + \sqrt{\frac{(\sinh\!^2 \e + 1)^2}{4 \sinh\!^2 \e}-1} \; \Big)
\\
& = \log \Big( \,\frac{\sinh\!^2 \e + 1}{2 \sinh \e} + \frac{1-\sinh\!^2 \e}{2 \sinh \e} \; \Big)
= \log \frac1{\sinh \e}
\,.
\end{aligned}
$$
Hence,
$$
d_S( C_1, \p K_1)
= \Arccosh \frac{\cosh (L_S(\g)/2)}{\sinh (L_S(\g)/2)} - \Arccosh \frac{\sinh \e}{\sinh (L_S(\g)/2)}
>\log \frac1{\sinh \e} \,.
$$

Assume now that $C_1$ is a collar $C(r,\l)$ for some cusp $r$.
Consider a fundamental domain for $S$ in the upper half-plane $\HH$
contained in $\{\zeta\in\HH: \, 0\le \Re \zeta \le 1\}$
and such that $\{\zeta\in\HH: \, 0\le \Re \zeta \le 1,\,\Im \zeta > 1/2\}$
corresponds to $C(r,2)=K_1$.
Thus, $\{\zeta\in\HH: \, 0\le \Re \zeta \le 1,\,\Im \zeta > 1/\l\}$ corresponds to $C_1$, and Lemma \ref{l:0} gives that
$\l = 2\sinh \e < 2$
and $C_1$ is contained in $C(r,2)$.
Hence,
$$
d_S( C_1, \p K_1)
= d_S(\p C_1, \p C(r,2))
= d_\HH(i/\l, i/2)
= \log \frac2{\l}
= \log \frac1{\sinh \e}
\,.
$$
\indent
Since collars of geodesics and cusps are pairwise disjoint by Remark \ref{r:CollarLemma}, we conclude
$$
d_S(C_1,C_2) \ge 2\log \frac1{\sinh \e}\,.
$$
\end{proof}

Let $X$ be a non-exceptional Riemann surface and $0<\e < \Arcsinh 1$.
Consider the thick-thin decomposition given by Margulis Lemma:
$X_\e = \{z\in X\,| \; \inj(z) \ge \e \}$ and $X \setminus X_\e$.

Given the cusps $\{r_i\}_{i\in I}$ in $X$, let us consider for each $r_i$ the collar in $X$ about $r_i$ with area $2\sinh \e$, $C_i=C(r_i,2\sinh \e)$, and let $\s_i=\partial C(r_i,2\sinh \e)$.

Given the simple closed geodesics $\{\g_j\}_{j\in J}$ in $X$ with $L_X(\g_j) < 2 \e$,
let us consider for each $\g_j$ its collar $K_j$ in $X$ of width $h_j=\Arccosh \frac{\sinh \e}{\sinh (L_X(\g_j)/2)}$.

By Lemma \ref{l:0}, we have
\begin{equation}\label{eq:0}
X\setminus X_\e
= \big( \cup_{i\in I} C_i\big) \cup \big(\cup_{j\in J} K_j\big).
\end{equation}

The following result is well-known.

\begin{lemma} \label{l:circulosycircunferencias}
Let $S$ be a non-exceptional Riemann surface, $z\in S$ and $r > 0$.
Then
$$
A_S \big( B_r(z) \big) \le 4\pi \sinh\!^2 \frac{r}{2}\,,
\qquad
L_S \big( \p B_r(z) \big) \le 2\pi \sinh r .
$$
Furthermore, if $0<r \le \inj(z)$, then
$$
A_S \big( B_r(z) \big) = 4\pi \sinh\!^2 \frac{r}{2}\,,
\qquad
L_S \big( \p B_r(z) \big) = 2\pi \sinh r .
$$
\end{lemma}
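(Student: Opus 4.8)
The plan is to lift the ball to the universal cover $\DD$, where the sharp formulas for hyperbolic disks are classical, and to exploit that the covering projection is a local isometry and hence never increases area or length.

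Write $S=\DD/\G$ with $\G$ a group of M\"obius transformations acting freely and properly discontinuously on $\DD$, and let $\pi\colon\DD\to S$ be the covering projection, which is a local isometry. Fix a lift $\tilde z\in\pi^{-1}(z)$. Since a local isometry is $1$-Lipschitz, $\pi(B_r(\tilde z))\subseteq B_r(z)$; conversely, given $w\in B_r(z)$, lifting a minimizing geodesic from $z$ to $w$ with initial point $\tilde z$ produces $\tilde w\in\pi^{-1}(w)$ with $d_\DD(\tilde z,\tilde w)\le d_S(z,w)<r$, so $B_r(z)=\pi(B_r(\tilde z))$. If in addition $d_S(z,w)=r$, then $r=d_S(z,w)\le d_\DD(\tilde z,\tilde w)\le d_S(z,w)=r$, so such a lift $\tilde w$ lies on $\p B_r(\tilde z)$; hence $\p B_r(z)\subseteq\pi(\p B_r(\tilde z))$.

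Next, recall the classical identities in the hyperbolic plane, obtained by integrating the area element $\sinh\rho\,d\rho\,d\th$ in geodesic polar coordinates: $A_\DD(B_r(\tilde z))=2\pi(\cosh r-1)=4\pi\sinh\!^2(r/2)$ and $L_\DD(\p B_r(\tilde z))=2\pi\sinh r$. Because $\pi$ is a local isometry it has unit Jacobian, so the area formula gives $A_S(\pi(E))\le A_\DD(E)$ for every measurable $E\subset\DD$; likewise $\pi$ preserves the length of any curve, so $\mathcal H^1(\pi(F))\le\mathcal H^1(F)$ for every $F\subset\DD$. Combining these with the inclusions of the previous paragraph yields $A_S(B_r(z))\le 4\pi\sinh\!^2(r/2)$ and $L_S(\p B_r(z))\le 2\pi\sinh r$.

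For the equalities, assume $0<r\le\inj(z)$. Using the description of $\inj(z)$ as half the infimum of the lengths of homotopically non-trivial loops based at $z$, equivalently $\tfrac12\inf\{d_\DD(\tilde z,\g\tilde z):\g\in\G\setminus\{\operatorname{id}\}\}$, we get $d_\DD(\tilde z,\g\tilde z)\ge 2r$ for every $\g\ne\operatorname{id}$, whence the open balls $\{\g B_r(\tilde z):\g\in\G\}$ are pairwise disjoint and $\pi|_{B_r(\tilde z)}$ is injective, so it is a Riemannian isometry onto $B_r(z)$; this gives $A_S(B_r(z))=4\pi\sinh\!^2(r/2)$. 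The same estimate shows $\pi(\p B_r(\tilde z))=\p B_r(z)$, and that $\pi$ can identify two points of the circle $\p B_r(\tilde z)$ only when $d_\DD(\tilde z,\g\tilde z)=2r$ for some $\g\ne\operatorname{id}$, which by proper discontinuity happens for at most finitely many $\g$; a finite set does not affect $\mathcal H^1$, so $L_S(\p B_r(z))=2\pi\sinh r$. The bulk of the argument (the hyperbolic-plane formulas and the area-formula bookkeeping) is routine; the only delicate point is this last step, namely checking that $r\le\inj(z)$ forces the $\G$-translates of the open $r$-ball to be disjoint, so that no cancellation occurs in passing from $\DD$ to $S$, and that the projection remains injective on the bounding circle outside a finite set.
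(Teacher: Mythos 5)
Your argument is correct, and since the paper states this lemma without proof (as ``well-known''), there is no in-paper argument to compare against; what you give is the canonical covering-space proof. A couple of small remarks: the inequality $\mathcal H^1(\pi(F))\le\mathcal H^1(F)$ follows from the general fact that a $1$-Lipschitz map does not increase $s$-dimensional Hausdorff measure, rather than from ``$\pi$ preserves lengths of curves'' (a fine point of phrasing, not substance); and in the equality case the identifications on the bounding circle can only occur when $r=\inj(z)$ exactly, since for $r<\inj(z)$ one has $d_\DD(\tilde z,\g\tilde z)>2r$ strictly for every $\g\ne\operatorname{id}$, so the delicate finite-set bookkeeping you flag is needed only at the boundary value of $r$. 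These are minor polish items; the lifting argument, the use of $2\inj(z)=\inf_{\g\ne\operatorname{id}}d_\DD(\tilde z,\g\tilde z)$ to get disjointness of the $\G$-translates, and the reduction to the classical hyperbolic-plane formulas are all sound.
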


%

\section{Surfaces and graphs}

We are going to construct a graph associated to any non-exceptional Riemann surface.

\smallskip



A subset $A$ in a metric space $(X,d)$ is called \emph{$r$-separated},
$r>0$, if $d(a,a')\geq r$ for any distinct $a,a'\in A$. Note that
if $A$ is maximal with this property, then the union $\cup_{a\in
A} B_r(a)$ covers $X$. A maximal $r$-separated set $A$ in a metric
space $X$ is called an $r$-\emph{approximation} of $X$.

Fix $0 < \varepsilon < \Arcsinh 1$.
Given any $\varepsilon$-approximation $A_\varepsilon$ of $X$,
the graph $\Gamma_{A_\varepsilon}=(V,E)$ with $V=A_\varepsilon$ and $E:=\{xy \, | \, x,y \in A_\varepsilon \mbox{ with } 0<d(x,y)\leq 2\varepsilon\}$ is called an $\varepsilon$-\emph{net}.

Given any $\d>0$ satisfying
\begin{equation} \label{eq:d1}
\d< \d_1:=\min\Big\{ \log \frac{1}{\sinh \e}\,, \, \Arcsinh \Big( \frac{\sqrt3}{4}\,\sinh \e \Big) \Big\} ,
\end{equation}
let $J_\d:=\{j \in J \, | \, L(\gamma_j)< 2\d\}$, where $J$ denotes the index set of the collars about simple closed geodesics in the thick-thin decomposition from equation \eqref{eq:0}.
Note that $\d_1 < \e$ and so, $\d < \e$.
Let $[X_\e]_\d=X_\e\cup \big(\cup_{j\in J\setminus J_\d} K_j\big)$.
Therefore, $[X_\e]_\d$ is the surface $X$ without the collars $C_i=C(r_i, 2 \sinh \e)$ about the cusps and the collars $K_j=C(\g_j,h_j)$ with $L_S(\g_j)<2\d$. Notice that $X_\e \subseteq [X_\e]_\d \subseteq X_\d$.
Let $\{[X_\e]_\d^r\}_r$ be the connected components of $[X_\e]_\d$.
For each $r$, let $[A]_\d^r$ be a $\d$-approximation of $[X_\e]_\d^r$ (with respect to the distance in $X$)
and $\Gamma_{[A]_\d^r}$ the corresponding $\delta$-net.
Let $[A]_\d= \cup_r [A]_\d^r$ and $\Gamma_{[A]_\d}= \cup_r \Gamma_{[A]_\d^r}$.
To simplify the notation we make no distinction between the vertices of the graph and the corresponding points in the surface.
For each cusp $r_i$ let us define a vertex $w_i$ and for each $\gamma_j$ with $j\in J_\d$ let us define a new vertex $v_j$.
Let us define a graph $\Gamma_\delta(X)$ with vertices $[A]_\d\cup \{w_i\}_{i\in I} \cup \{v_j\}_{j\in J_\d}$, containing $\Gamma_{[A]_\d}$ as a subgraph and such that $w_i$ is adjacent to every vertex $v$ in $[A]_\d$ such that $d(v,C(r_i,2\sinh \e))\leq \d$ and $v_j$ is adjacent to every vertex $v$ in $[A]_\d$ such that $d(v,K_j)\leq \d$.

\begin{remark} \label{r:117}
From the definition of $\Gamma_{\d}(X)$, it follows that no pair of vertices in $\{w_i\}_{i\in I} \cup \{v_j\}_{j\in J_\d}$ can be adjacent in $\Gamma_\delta(X)$.
By Lemma \ref{l:us}, $\{N(w_i)\}_{i\in I} \cup \{N(v_j)\}_{j\in J_\d}$ are pairwise disjoint sets in $\Gamma_\delta(X)$.
\end{remark}

We have the following result in \cite[Lemma 2.3]{K}.

\begin{lemma} \label{l: growth0}
Let $X$ be a complete Riemannian $n$-manifold whose Ricci curvature
is bounded from below by $-(n-1)K^{2}$ $(K>0)$, and let $P$ be an $\varepsilon$-separated subset of $X$.
Then we have
$|\{p\in P:x\in B_{r}(p)\}| \le \nu$
for all $r>0$ and for all $x\in X$, where $\nu=\nu(n, K, \varepsilon, r)>0$. Consequently, every
$\varepsilon$-net in a complete Riemannian manifold whose Ricci curvature is bounded from
below is uniform.
\end{lemma}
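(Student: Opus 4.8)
The plan is to prove this by the classical volume–packing argument based on the Bishop–Gromov volume comparison theorem; this is essentially Kanai's original argument. Write $v_{n,K}(\rho)$ for the volume of a geodesic ball of radius $\rho$ in the simply connected $n$-dimensional space form of constant curvature $-K^{2}$; this is an explicit, finite, strictly increasing function of $\rho\ge 0$ depending only on $n$ and $K$. Since $\mathrm{Ric}\ge -(n-1)K^{2}$, the Bishop–Gromov theorem asserts that for every $p\in X$ the ratio $\rho\mapsto \Vol(B_{\rho}(p))/v_{n,K}(\rho)$ is non-increasing on $(0,\infty)$; in particular $\Vol(B_{\rho}(p))\le v_{n,K}(\rho)<\infty$ for every $\rho$.

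First I would fix $x\in X$ and $r>0$ and set $P_{x,r}:=\{p\in P:\ x\in B_{r}(p)\}=P\cap B_{r}(x)$; it suffices to bound $|F|$ for an arbitrary finite $F\subseteq P_{x,r}$, say $N=|F|$. Because $P$ is $\varepsilon$-separated, the balls $\{B_{\varepsilon/2}(p)\}_{p\in F}$ are pairwise disjoint, and each is contained in $B_{r+\varepsilon/2}(x)$ by the triangle inequality; conversely, again by the triangle inequality, $B_{r+\varepsilon/2}(x)\subseteq B_{2r+\varepsilon/2}(p)$ for every $p\in F$. The key step is then, for each $p\in F$, the chain
$$
\frac{\Vol(B_{\varepsilon/2}(p))}{v_{n,K}(\varepsilon/2)}
\ \ge\ \frac{\Vol(B_{2r+\varepsilon/2}(p))}{v_{n,K}(2r+\varepsilon/2)}
\ \ge\ \frac{\Vol(B_{r+\varepsilon/2}(x))}{v_{n,K}(2r+\varepsilon/2)},
$$
where the first inequality is Bishop–Gromov monotonicity at $p$ with radii $\varepsilon/2<2r+\varepsilon/2$ and the second is the inclusion $B_{r+\varepsilon/2}(x)\subseteq B_{2r+\varepsilon/2}(p)$. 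Hence $\Vol(B_{\varepsilon/2}(p))\ge \big(v_{n,K}(\varepsilon/2)/v_{n,K}(2r+\varepsilon/2)\big)\,\Vol(B_{r+\varepsilon/2}(x))$. Summing over the $N$ pairwise disjoint balls, all contained in $B_{r+\varepsilon/2}(x)$, gives
$$
N\,\frac{v_{n,K}(\varepsilon/2)}{v_{n,K}(2r+\varepsilon/2)}\,\Vol(B_{r+\varepsilon/2}(x))
\ \le\ \sum_{p\in F}\Vol(B_{\varepsilon/2}(p))
\ \le\ \Vol(B_{r+\varepsilon/2}(x)),
$$
and dividing by the positive finite number $\Vol(B_{r+\varepsilon/2}(x))$ yields $N\le v_{n,K}(2r+\varepsilon/2)/v_{n,K}(\varepsilon/2)=:\nu(n,K,\varepsilon,r)$. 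As $F\subseteq P_{x,r}$ was an arbitrary finite subset, $|P_{x,r}|\le\nu$.

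For the last assertion, let $\Gamma_{A_{\varepsilon}}$ be an $\varepsilon$-net and $v\in A_{\varepsilon}$. Every neighbour $v'$ of $v$ satisfies $0<d(v,v')\le 2\varepsilon<3\varepsilon$, hence $v\in B_{3\varepsilon}(v')$, so $N(v)\subseteq\{p\in A_{\varepsilon}:\ v\in B_{3\varepsilon}(p)\}$; since $A_{\varepsilon}$ is $\varepsilon$-separated, the first part with $r=3\varepsilon$ gives $|N(v)|\le\nu(n,K,\varepsilon,3\varepsilon)$ for every vertex $v$, so $\Gamma_{A_{\varepsilon}}$ is $\mu$-uniform with $\mu=\nu(n,K,\varepsilon,3\varepsilon)$.

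I do not expect a genuine obstacle beyond invoking Bishop–Gromov correctly; the one point needing care is that Bishop–Gromov yields only an \emph{upper} volume comparison, so $\Vol(B_{\varepsilon/2}(p))$ cannot be bounded below directly. The device is to use the monotonicity of the volume \emph{ratio} together with the reverse inclusion $B_{r+\varepsilon/2}(x)\subseteq B_{2r+\varepsilon/2}(p)$, so that the unknown quantity $\Vol(B_{r+\varepsilon/2}(x))$ occurs on both sides of the final estimate and cancels, leaving a bound depending only on $n,K,\varepsilon,r$.
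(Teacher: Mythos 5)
Your argument is correct and is precisely the standard Bishop--Gromov volume-packing argument that Kanai uses; the paper itself gives no proof but simply cites \cite[Lemma 2.3]{K}, so there is nothing in the paper to diverge from. One small remark: you were right to take $r=3\varepsilon$ rather than $2\varepsilon$ in the last step, since the edge condition $d(v,v')\le 2\varepsilon$ is non-strict while $B_r(p)$ is open, so $r$ must exceed $2\varepsilon$.
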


Lemma \ref{l: growth0} has the following consequence.

\begin{lemma} \label{l: growth}
If $X$ is a non-exceptional Riemann surface and $[A]_\d$ is a  $\d$-separated set in $[X_\e]_\d$, then
there is some constant $\mu(\d)$ such that $|B_{3\d}(p)\cap [A]_\d|\leq \mu(\d)$ for every $p\in [X_\e]_\d$.
\end{lemma}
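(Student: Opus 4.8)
I want to reduce Lemma~\ref{l: growth} to Kanai's Lemma~\ref{l: growth0} applied to a model surface of constant curvature $-1$. The point is that $[X_\e]_\d$ is a subset of $X$ on which the injectivity radius is bounded below by $\d$ (by construction, since we have removed the collars of cusps and the collars of the short geodesics), so balls of moderate radius behave like balls in the hyperbolic plane $\HH^2$, and hence a volume-counting argument goes through.

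Concretely, first I would fix $p\in [X_\e]_\d$ and look at the points of $[A]_\d$ lying in $B_{3\d}(p)$; since $[A]_\d$ is $\d$-separated (with respect to the distance in $X$), the balls $B_{\d/2}(q)$, $q\in [A]_\d\cap B_{3\d}(p)$, are pairwise disjoint and all contained in $B_{3\d+\d/2}(p)=B_{7\d/2}(p)$. Therefore
$$
|B_{3\d}(p)\cap [A]_\d| \cdot \min_{q} A_S\big(B_{\d/2}(q)\big) \le A_S\big(B_{7\d/2}(p)\big).
$$
Now Lemma~\ref{l:circulosycircunferencias} gives the upper bound $A_S(B_{7\d/2}(p)) \le 4\pi \sinh\!^2(7\d/4)$ with no hypothesis needed, so the remaining task is a lower bound on $A_S(B_{\d/2}(q))$ that is uniform over $q\in[A]_\d$. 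For the latter I would use the second half of Lemma~\ref{l:circulosycircunferencias}: if $0<r\le \inj(q)$ then $A_S(B_r(q)) = 4\pi\sinh\!^2(r/2)$. Since $q\in [X_\e]_\d$ and, by the definition of $[X_\e]_\d$ in equation~\eqref{eq:0} together with Lemma~\ref{l:dc}, the injectivity radius on $[X_\e]_\d$ does not drop below $\d$ — indeed $[X_\e]_\d$ is the surface with the collars $C_i$ and the collars $K_j$, $j\in J_\d$, removed, so every point of it has injectivity radius at least $\min\{\e,\d\}=\d$ — we get $\inj(q)\ge \d > \d/2$ and hence $A_S(B_{\d/2}(q)) = 4\pi\sinh\!^2(\d/4)$. (If one prefers to be careful that $B_{\d/2}(q)$ may stick out of $[X_\e]_\d$ into the removed collars: that is fine, since the injectivity radius is computed in the ambient surface $X$ and we only used $\inj_X(q)\ge\d$.) Setting
$$
\mu(\d) := \frac{4\pi \sinh\!^2(7\d/4)}{4\pi \sinh\!^2(\d/4)} = \frac{\sinh\!^2(7\d/4)}{\sinh\!^2(\d/4)}
$$
then finishes the proof.

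The only real subtlety, and the step I would be most careful about, is the claim that $\inj(z)\ge\d$ for all $z\in[X_\e]_\d$. This is where one must invoke Lemma~\ref{l:0}: the connected components of $\{z\in X \mid \inj(z)<\e\}$ are exactly the collars $C_i=C(r_i,2\sinh\e)$ and $K_j=C(\g_j,h_j)$ appearing in~\eqref{eq:0}; removing the $C_i$ and the $K_j$ with $j\in J_\d$ leaves a set on which either $\inj(z)\ge\e\ge\d$, or $z$ lies in some $K_j$ with $j\in J\setminus J_\d$, i.e. $L_X(\g_j)\ge 2\d$, in which case $\inj(z)\ge \d$ as well (the injectivity radius on the collar $K_j$ is at least half the length of its core geodesic... one checks $\inj(z)\ge L_X(\g_j)/2\ge \d$ using the trigonometric identity behind Lemma~\ref{l:0}(1), or simply that a shortest loop through $z$ is freely homotopic to $\g_j$ and hence has length at least $L_X(\g_j)$). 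Alternatively, and perhaps more cleanly, one can cite Lemma~\ref{l: growth0} directly: since $X$ has constant curvature $-1$ (so Ricci $=-1$, and $n=2$), and $[A]_\d$ is a $\d$-separated subset of $X$, Lemma~\ref{l: growth0} with $r=3\d$ immediately yields the constant $\mu(\d)=\nu(2,1,\d,3\d)$, and the restriction $p\in[X_\e]_\d$ is not even needed for the statement — it is only needed later, for the geometric meaning of the net. I would present the self-contained volume-counting argument above, since it gives an explicit $\mu(\d)$, and remark that it also follows from Lemma~\ref{l: growth0}.
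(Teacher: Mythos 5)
Your proposal is correct, and the alternative you flag at the end is exactly the paper's proof: the paper offers no argument for Lemma~\ref{l: growth} beyond the remark that it is a consequence of Lemma~\ref{l: growth0}, which applies directly since $X$ has constant curvature $-1$ and $[A]_\d$ is $\d$-separated in $X$. Your main, self-contained argument is a different route that trades generality for explicitness. Kanai's Lemma~\ref{l: growth0} is proved by a Bishop--Gromov packing argument that needs only a lower Ricci bound and no injectivity-radius hypothesis, so it yields the bound for every $p\in X$ without the observation that $\inj\ge\d$ on $[X_\e]_\d$. Your volume-count instead uses the exact hyperbolic area formula of Lemma~\ref{l:circulosycircunferencias}, and this is what forces the extra step $\inj(q)\ge\d$ for $q\in[A]_\d\subset[X_\e]_\d$; you identify this correctly as the one delicate point, and your justification (a shortest loop through $z\in K_j$ is freely homotopic to $\g_j$, hence of length at least $L_X(\g_j)\ge 2\d$, combined with $\inj\ge\e>\d$ on $X_\e$) is sound. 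The payoff is the explicit constant $\mu(\d)=\sinh^2(7\d/4)/\sinh^2(\d/4)$, whereas the paper's route gives an unspecified $\nu(2,1,\d,3\d)$. Both are valid proofs.
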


\begin{proposition} \label{p:uniform}
If $X$ is a non-exceptional Riemann surface,
$0 < \varepsilon < \Arcsinh 1$ and $0<\d<\d_1$,
then $\Gamma_{\delta}(X)$ is uniform.
\end{proposition}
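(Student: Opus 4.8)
The plan is to bound uniformly the size of the neighborhood $N(x)$ of each vertex $x$ of $\Gamma_\d(X)$, dealing separately with the three kinds of vertices: the points of $[A]_\d$, the cusp vertices $w_i$ ($i\in I$), and the geodesic vertices $v_j$ ($j\in J_\d$). The two geometric inputs are Lemma \ref{l: growth} (for every $p\in[X_\e]_\d$ one has $|B_{3\d}(p)\cap[A]_\d|\le\mu(\d)$) and Lemma \ref{l:us} (two distinct connected components of $\{z\in X:\inj(z)<\e\}$ lie at distance at least $2\log(1/\sinh\e)$, which exceeds $2\d$ because $\d<\d_1\le\log(1/\sinh\e)$). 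Recall also that $[A]_\d\subseteq[X_\e]_\d$ by construction.

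First I would bound $|N(p)|$ for $p\in[A]_\d$. The neighbors of $p$ inside the subgraph $\Gamma_{[A]_\d}$ all satisfy $d(p,\cdot)\le 2\d$, hence lie in $B_{3\d}(p)\cap[A]_\d$, so there are at most $\mu(\d)$ of them by Lemma \ref{l: growth}. Among the remaining vertices, $p$ is adjacent to $w_i$ only if $d(p,C(r_i,2\sinh\e))\le\d$ and to $v_j$ only if $d(p,K_j)\le\d$; by Lemma \ref{l:0} the sets $C(r_i,2\sinh\e)$ ($i\in I$) and $K_j$ ($j\in J_\d$) are exactly the connected components of $\{z\in X:\inj(z)<\e\}$ which are collars of cusps, respectively collars of simple closed geodesics of length less than $2\d$, so by Lemma \ref{l:us} any two of them are more than $2\d$ apart. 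Consequently $p$ lies within distance $\d$ of at most one of them, so it has at most one neighbor in $\{w_i\}_{i\in I}\cup\{v_j\}_{j\in J_\d}$, and $|N(p)|\le\mu(\d)+1$.

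Next I would bound $|N(w_i)|$ and $|N(v_j)|$; by Remark \ref{r:117} all their neighbors belong to $[A]_\d$. The point is that the boundary curves of the removed collars are short and sit inside $X_\e$: the boundary $\s_i=\p C(r_i,2\sinh\e)$ is one simple closed curve with $L_X(\s_i)=2\sinh\e<2$ (the boundary length of a cusp collar equals its area; or invoke Lemma \ref{l:dc}), while $\p K_j$ has two components, each of length at most $2\sinh\e<2$ by Lemma \ref{l:dc}; moreover $\inj\equiv\e$ on all these curves, so they lie in $X_\e\subseteq[X_\e]_\d$. If $v$ is a neighbor of $w_i$ then $v\notin C(r_i,2\sinh\e)$, hence $d(v,\s_i)=d(v,C(r_i,2\sinh\e))\le\d$; choosing points $p_1,\dots,p_N\in\s_i$ with consecutive ones at distance $\le\d/2$ along $\s_i$ (so $N\le\lceil 2L_X(\s_i)/\d\rceil\le\lceil 4/\d\rceil$), every such $v$ lies in $B_{3\d}(p_k)\cap[A]_\d$ for some $k$, and Lemma \ref{l: growth} yields $|N(w_i)|\le N\mu(\d)\le\lceil 4/\d\rceil\,\mu(\d)$. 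The identical covering argument applied to $\p K_j$, of total length less than $4$, gives $|N(v_j)|\le\lceil 8/\d\rceil\,\mu(\d)$. Since $\d$ is fixed, all three bounds are dominated by the single constant $\mu:=\lceil 8/\d\rceil\,\mu(\d)+1$, so $\Gamma_\d(X)$ is $\mu$-uniform.

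I expect the combinatorics above to be routine, and the real work is in the geometric preliminaries: the main obstacles, both resolved by the preceding lemmas, are (a) showing that distinct thin components are uniformly separated (Lemma \ref{l:us}), so that a vertex of $[A]_\d$ is adjacent to at most one of the hub vertices $w_i,v_j$ and no two hubs can coalesce, and (b) showing that the boundary curves of the removed collars have length bounded independently of $i,j$ and of the surface (Lemma \ref{l:dc}), so that a hub vertex cannot be adjacent to too many vertices of $[A]_\d$. Together with Kanai's local finiteness estimate (Lemma \ref{l: growth}), these are precisely what allow the argument to work even when $X$ has cusps or arbitrarily short closed geodesics, i.e.\ injectivity radius zero.
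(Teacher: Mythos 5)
Your proof is correct and follows essentially the same route as the paper's: bound $|N(p)|$ for $p\in[A]_\d$ by $\mu(\d)+1$ using Lemma \ref{l: growth} and the separation of thin components, then cover the short boundary curves $\s_i$ and $\p K_j$ by finitely many $\d$-balls to bound $|N(w_i)|$ and $|N(v_j)|$ via Lemma \ref{l: growth} again. The only cosmetic difference is that the paper uses a maximal $\d$-separated set on the boundary curve where you use points spaced $\d/2$ apart, giving the same uniform bound up to a constant.
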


\begin{proof} Let $v$ be any vertex in $\Gamma_{\delta}(X)$.
If $v\in [A]_\d$, then its adjacent vertices $w$ in $[A]_\d$ satisfy $d(v,w) \le 2\d$ and, by Lemma \ref{l: growth}, these are at most $\mu(\d)$.
Also, by Remark \ref{r:117}, $v$ has at most one adjacent vertex $v_j$ or $w_i$,
corresponding to a collar $K_j$ or $C_i$, respectively.

If $v=w_i$ is the vertex corresponding to a collar $C_i=C(r_i,2\sinh \e)$ then, by Lemma \ref{l:dc}, $\partial C_i =\s_i$ satisfies $L(\s_i) \le 2\sinh \e<2$.
Let $\{x_1,\dots,x_n\}$ be a maximal $\d$-separated set in $\s_i$.
Then, $n\leq \frac{2\sinh \e}{\d}$ and for any vertex $w$ adjacent to $v$, we have $w\in [A]_\d$,
$d(w,\s_i) \le \d$ and so, $w$ must be contained in some $B_{2\d}(x_k)$.
Thus, by Lemma \ref{l: growth}, there are at most $\frac{2\sinh \e}{\d}\mu(\d)<\frac{2}{\d}\mu(\d)$ vertices adjacent to $v$ in $\Gamma_{\delta}(X)$.

If $v=v_j$ is the vertex corresponding to a collar $K_j=C(\g_j,h)$ then, by Lemma \ref{l:dc},
$\partial K_j =\eta^j_1\cup \eta^j_2$ and $L(\eta^j_i) \le 2\sinh \e<2$.
Let $\{x_1,\dots,x_n\}$ be a maximal $\d$-separated set in $\eta^j_1\cup \eta^j_2$.
Then, $n\leq \frac{4\sinh \e}{\d}$ and for any vertex $w$ adjacent to $v$, we have $w\in [A]_\d$,
$d(w, \eta_1^j \cup \eta_2^j) \le \d$ and so, $w$ must be contained in some $B_{2\d}(x_k)$.
Thus, by Lemma \ref{l: growth}, there are at most $\frac{4\sinh \e}{\d}\mu(\d)<\frac{4}{\d}\mu(\d)$ vertices adjacent to $v$ in $\Gamma_{\delta}(X)$.
\end{proof}

Let $X$ be a non-exceptional Riemann surface
and $\Gamma_\delta(X)$ the associated graph with vertex set $[A]_\d\cup \{w_i\}_{i\in I} \cup \{v_j\}_{j\in J_\d}$.
Given a geodesic domain $G$ in $X$, let $S_G=([A]_\d\cap G)\cup \{w_i : \, C(r_i,2\sinh \e)\subset G \} \cup \{v_j : \, C(\g_j,h_j)\cap G \neq \emptyset\}$.

\begin{remark}\label{r: boundary}
Let $X$ be a non-exceptional Riemann surface and $\Gamma_\delta(X)$ the associated graph defined above.
Given a geodesic domain $G$, then $\partial S_G$ is the set of vertices $w\in [A]_\d\setminus G$ satisfying one of the following conditions:
\begin{itemize}
	\item[a)] $w$ is adjacent to a vertex $v\in [A]_\d\cap G$ and either $\bar{B}_\d(v)\cap \partial G \neq \emptyset$ or $\bar{B}_\d(w)\cap \partial G \neq \emptyset$.
In particular, we have either $\bar{B}_\d(v)\cap \partial_{2\d} G \neq \emptyset$ or $\bar{B}_\d(w)\cap \partial_{2\d} G \neq \emptyset$.
	\item[b)] $w$ is adjacent to a vertex  $v\in S_G$ with $v=v_j$ for some $j\in J_\d$ and so, $C(\g_j,h_j)\cap \partial G \neq \emptyset$.
\end{itemize}

Notice that $w \neq v_j$ for every $j \in J_\d$ with $C(\g_j,h_j)\cap G =\emptyset$,
since, by Lemma \ref{l:us}, the distance from $C(\g_j,h_j)$ to $G$ is at least $\log \frac1{\sinh \e}> \d$.

Note also that, since the collars are pairwise disjoint, given a geodesic domain $G$ and a collar $C_i$, we have either $C_i \subset G$ or $C_i \cap G = \emptyset$.
\end{remark}

Let $\partial_{2\d} S_G$ denote the set of vertices $w\in [A]_\d\setminus G$  satisfying  condition $a)$ above.

\begin{remark} \label{O: dist}
For every vertex $w\in \partial_{2\d} S_G$, $d(w,\p_{2\d} G)\leq 2\d$.
\end{remark}

\begin{theorem} \label{th:graph}
Given a $(2\d_2)$-regular non-exceptional Riemann surface $X$,
$0 < \varepsilon < \Arcsinh 1$ and $0<\d<\min\{\d_1,\d_2\}$,
$X$ satisfies LII if and only if $\Gamma_{\delta}(X)$ satisfies LII.
\end{theorem}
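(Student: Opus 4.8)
The plan is to prove the two implications separately, in each case passing between "areas of geodesic domains in $X$" and "cardinalities of vertex sets in $\Gamma_\d(X)$" by comparing $A_S(G)$ with $|S_G|$ and $L_S(\p G)$ with $|\p S_G|$, and then invoking Lemma \ref{l:FR1} and Remark \ref{r:regular} to reduce LII in $X$ to the geodesic-domain isoperimetric inequality. Throughout I will use the thick-thin decomposition \eqref{eq:0}, so that $X$ is the union of $[X_\e]_\d$ together with the cusp collars $C_i$ and the short-geodesic collars $K_j$ ($j\in J_\d$), each of the latter contributing exactly one vertex $w_i$ or $v_j$.

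\smallskip

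\textbf{Suppose $\Gamma_\d(X)$ has LII, say $|\p A|\ge h_0|A|$ for all finite $A$.} Given a geodesic domain $G$ in $X$, form $S_G$. First I would bound $|S_G|$ above in terms of $A_S(G)$: the vertices of $[A]_\d\cap G$ lie in $G\cap[X_\e]_\d$, and since they are $\d$-separated the balls $B_{\d/2}$ around them are disjoint and each has area bounded below (using $0<\d/2\le\e\le\inj$ on $[X_\e]_\d$... more carefully, using Lemma \ref{l:circulosycircunferencias} and that points of $[X_\e]_\d$ have injectivity radius $\ge\d$), so their number is $\le c_1 A_S(G)$; each vertex $w_i\in S_G$ has $C_i\subset G$ with $A_S(C_i)=2\sinh\e$ bounded below, and each $v_j\in S_G$ has $K_j\cap G\ne\emptyset$, where by Lemma \ref{l:area2} (this is where the hypothesis $\d<\d_1$, hence $L(\g_j)\le 2\Arcsinh(\tfrac{\sqrt3}{4}\sinh\e)$, enters) the portion $C(\g_j,h_j-\d)$ has area bounded below by a positive constant depending only on $\e,\d$ and moreover, because $d(C(\g_j,h_j-\d),\,\p C(\g_j,h_j))=\d_0\le\d<\log\frac1{\sinh\e}\le d_S(C_i',C_j')$-type separation from any other collar, these area contributions are disjoint and disjoint from the $[X_\e]_\d$ part; hence $|S_G|\le c_2 A_S(G)$. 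Next I would bound $|\p S_G|$ from below by $L_S(\p G)$ up to constants, but in fact one only needs the \emph{reverse}: $|\p S_G|\le c_3 L_S(\p_\d G)+c_4\,CC(\p G\setminus\p_\d G)$. Using Remark \ref{r: boundary}: each $w\in\p_{2\d}S_G$ has $d(w,\p_{2\d}G)\le 2\d$ (Remark \ref{O: dist}), and a fixed number of $\d$-separated points can lie within $2\d$ of any length-$\d$ arc of $\p G$, so $|\p_{2\d}S_G|\le c_3 L_S(\p_{2\d}G)$; the remaining boundary vertices (type b) are adjacent to some $v_j$, and $N(v_j)$ has bounded cardinality by Proposition \ref{p:uniform}, while each such $v_j$ with $K_j\cap\p G\ne\emptyset$ forces a component of $\p G$ of length $<2\d$ inside $K_j$, so their number is $\le CC(\p G\setminus\p_{2\d}G)$. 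Now $(2\d_2)$-regularity with $2\d\le 2\d_2$ (so $\p_{2\d}G\supseteq\p_{2\d_2}G$... actually I use $2\d$-regularity, which follows from $(2\d_2)$-regularity for $\d\le\d_2$ — here is a small point to check, that $\d$-regularity is monotone in $\d$, or rather that I should run the argument directly at scale $2\d_2$; I will fix the bookkeeping so the regularity hypothesis is applied at exactly the scale that appears) converts $CC(\p G\setminus\p_{2\d}G)$ into $\lesssim L_S(\p_{2\d}G)\le L_S(\p G)$. Combining: $h_0 c_2^{-1}A_S(G)\le h_0|S_G|\le|\p S_G|\le c_5 L_S(\p G)$, so $h_g(X)>0$, and Lemma \ref{l:FR1} gives LII for $X$.

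\smallskip

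\textbf{Conversely, suppose $X$ has LII}; by the Matsuzaki bound quoted after Lemma \ref{l:FR1}, $h_g(X)>0$, i.e. $A_S(G)\ge c\,L_S(\p G)$ for all geodesic domains $G$. Let $A$ be a finite vertex set in $\Gamma_\d(X)$; I want $|\p A|\ge h_0|A|$. The natural move is to build a geodesic domain $G=G(A)$ whose closure is (up to the standard decomposition into generalized Y-pieces, cf. \cite[Proposition 3.2]{AR}) the "union of the Y-pieces met by $A$"; more concretely, take $G$ to be the interior of a small regular neighborhood of $\bigcup_{v\in A}\bar B_\d(v)$ adjusted to have geodesic boundary via the free-homotopy/minimal-geodesic construction, surrounding exactly those cusps whose vertex $w_i\in A$ and those short geodesics $\g_j$ with $v_j\in A$. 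Then $A_S(G)\le c_6|A|$ (reverse of the area estimate above: each vertex contributes bounded area — $\d$-balls have area $\le 4\pi\sinh^2(\d/2)$ by Lemma \ref{l:circulosycircunferencias}, cusp collars area $2\sinh\e<2$, and the short-geodesic collars $K_j$ have area $<4\sinh\e$ by Lemma \ref{l:area}), and $L_S(\p G)\le c_7|\p A|$: boundary components of $\p G$ either run near the "outer" $\d$-balls of $A$ that abut vertices of $\p A$ (contributing bounded length per such vertex, using uniformity and Lemma \ref{l: growth}) or are the core geodesics of collars $K_j$ with $v_j\in\p A$ (length $<2\d<2$ each) or boundary curves where $G$ surrounds a cusp not cut off — here one uses that if $w_i\notin A$ the domain does not enter $C_i$ deeply. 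Hence $c\,c_7^{-1}|\p A|\ge c\,L_S(\p G)^{}\le A_S(G)\le c_6|A|$ — wait, that chain must read $c\,L_S(\p G)\le A_S(G)$ and $A_S(G)\le c_6|A|$, $L_S(\p G)\ge c_7^{-1}|\p A|$? No: I need $L_S(\p G)\le c_7|\p A|$ and then $|A|\le c_6^{-1}A_S(G)\le c_6^{-1}\cdot$(nothing useful). So instead I run it the other way: $|A|\le c\,\text{(lower bd for }A_S(G)/\text{area per vertex)}$... The correct direction: from $h_g(X)>0$ we get $L_S(\p G)\le c^{-1}A_S(G)$; I need to turn this into $|\p A|\ge h_0|A|$, so I want $|\p A|\gtrsim L_S(\p G)$ from below and $A_S(G)\lesssim|A|$ from above, giving $|\p A|\gtrsim L_S(\p G)\gtrsim A_S(G)\gtrsim$ — no. The clean formulation, which I will carry out, is: build $G$ with $A_S(G)\ge c_8|A|$ (\emph{lower} bound, so $G$ is "large enough", using the disjoint area contributions exactly as in the first part) and $L_S(\p G)\le c_9|\p A|$ (\emph{upper} bound); then $c_8|A|\le A_S(G)\le c^{-1}L_S(\p G)\le c^{-1}c_9|\p A|$, giving $h(\Gamma_\d(X))\ge c\,c_8/c_9>0$.

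\smallskip

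\textbf{Main obstacle.} The routine area/length counting via Lemmas \ref{l:circulosycircunferencias}, \ref{l:area}, \ref{l:area2}, \ref{l: growth} and Proposition \ref{p:uniform} is bounded-distortion bookkeeping; the genuinely delicate point is the \emph{geometric realization of a vertex set $A$ as a geodesic domain $G(A)$} in the converse direction — ensuring $\p G$ consists of \emph{simple closed geodesics}, that $G$ surrounds precisely the intended cusps/short geodesics, and that the vertices in $A$ that are "interior" do not spuriously contribute boundary length while those in $\p A$ control $L_S(\p G)$ up to a constant. Making the neighborhood $\bigcup_{v\in A}\bar B_\d(v)$ into a bona fide geodesic domain requires replacing each boundary loop by the minimal geodesic in its free homotopy class (possible precisely because, away from the cut-off collars, there are no cusps to cause length to degenerate) and checking these geodesics are disjoint and simple — this is exactly the place where the structure of $[X_\e]_\d$ (injectivity radius $\ge\d$, collars well-separated by Lemma \ref{l:us}) is indispensable, and where I expect to spend most of the effort; the $(2\d_2)$-regularity hypothesis is what finally lets me discard the short boundary components in favor of $L_S(\p G)$ in the first implication, so I will be careful to apply it at the correct scale $2\d_2\ge 2\d$.
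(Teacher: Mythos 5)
Your proposal correctly identifies the two objects to compare ($A_S(G)$ vs.\ $|S_G|$ and $L_S(\p G)$ vs.\ $|\p S_G|$), and the treatment of $\p S_G$ via Remarks \ref{r: boundary}, \ref{O: dist}, Lemma \ref{l: growth}, Proposition \ref{p:uniform} and $(2\d)$-regularity matches the paper's argument (your monotonicity worry is fine: a $(2\d_2)$-regular surface is $(2\d)$-regular for $\d\le\d_2$, since $\p_{2\d}G\supseteq\p_{2\d_2}G$ and $CC(\p G\setminus\p_{2\d}G)\le CC(\p G\setminus\p_{2\d_2}G)$). However there are two genuine gaps.

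\textbf{Direction error in the area/vertex comparison.} In the direction ``$\G_\d(X)$ has LII $\Rightarrow X$ has LII'' you need the inequality $A_S(G)\le c\,|S_G|$, and your final chain $h_0c_2^{-1}A_S(G)\le h_0|S_G|\le|\p S_G|\le c_5L_S(\p G)$ indeed uses it. But the argument you sketch proves the \emph{opposite} bound $|S_G|\le c_2\,A_S(G)$: disjoint $\d/2$-balls around $\d$-separated vertices with area bounded \emph{below} bound the number of vertices above by the total area, not the area above by the number of vertices. What is actually needed is a \emph{covering} argument with \emph{upper} area bounds: $G$ is covered by $\bigcup_{v\in S_G\cap[X_\e]_\d}B_{2\d}(v)\cup\bigcup_{w_i\in S_G}C_i\cup\bigcup_{v_j\in S_G}K_j$, and each piece has area bounded above (Lemmas \ref{l:circulosycircunferencias}, \ref{l:0}, \ref{l:area}), giving $A(G)\le\max\{4\pi\sinh^2\!\d,4\sinh\e\}\,|S_G|$. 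As written, the derivation does not support the inequality you later invoke. (A similar sign confusion appears several times in the statement of $h_g>0$ --- you write ``$A_S(G)\ge c\,L_S(\p G)$'' and ``$L_S(\p G)\le c^{-1}A_S(G)$'' when the correct form is $A_S(G)\le h_g^{-1}L_S(\p G)$ --- though your final chain in that direction eventually uses the correct orientation.)

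\textbf{Unnecessary and unresolved geodesic-domain construction.} Your converse direction (``$X$ has LII $\Rightarrow\G_\d(X)$ has LII'') hinges on realizing a finite vertex set $A$ as a geodesic domain $G(A)$, and you flag this as your ``main obstacle'' without carrying it out. This is a genuine gap: passing from $\bigcup_{v\in A}\bar B_\d(v)$ to a domain bounded by simple closed geodesics can fail (a boundary loop may be contractible or homotopic to a cusp, with no minimal geodesic), and even when it succeeds the length/area may change uncontrollably, undermining the desired bounds $A_S(G)\ge c_8|A|$ and $L_S(\p G)\le c_9|\p A|$. But the detour is unnecessary. The Cheeger inequality $A(\Omega)\le h(X)^{-1}L(\p\Omega)$ holds for \emph{all} bounded open subsets $\Omega$, not only geodesic domains; Lemma \ref{l:FR1} and the $h_g$ bound are needed only in the other direction, where one starts from a geodesic domain. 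So one should simply set
\[
\Omega=\Big(\bigcup_{v\in S\cap[X_\e]_\d}B_\d(v)\Big)\cup\Big(\bigcup_{w_i\in S}C_i\Big)\cup\Big(\bigcup_{v_j\in S}K_j\Big),
\]
prove $A(\Omega)\ge K|S|$ using disjoint shrunken pieces (shrunken balls $B_{\d'/2}(v)$, shrunken cusp collars $C(r_i,\l')$, shrunken geodesic collars $C(\g_j,h_j-\d'/2)$, the last via Lemma \ref{l:area2}), prove $L(\p\Omega)\le C|\p'S|\le C\mu|\p S|$ (only pieces near the edge of $S$ can contribute to $\p\Omega$, and Proposition \ref{p:uniform} controls the ratio $|\p'S|/|\p S|$), and conclude directly from $X$'s LII applied to $\Omega$. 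This sidesteps entirely the problem you identified as the hard part.
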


\begin{proof} Suppose $X$ satisfies LII. Consider any non-empty finite subset of vertices $S$ in $\Gamma_{\delta}(X)$. Let us define
$$
\Omega= \Big(\cup_{v\in S\cap [X_\e]_\d}B_{\delta}(v)\Big) \cup
\Big(\cup_{\{ i\in I \,: \,w_i\in S \}}C_i\Big)
\cup \Big(\cup_{\{ j\in J_\d \,: \,v_j\in S \}}K_j\Big).
$$

Choose $\d'$ satisfying
$$
\d'< \min \Big\{ \d, \, 2 \log\frac43 \,\Big\} < \d_1 < 2\Arcsinh \Big( \frac{\sqrt3}{4}\,\sinh \e \Big) .
$$
Then we have the following:
\begin{itemize}
	\item for any pair of vertices $v_1,v_2\in S\cap [X_\e]_\d$,
$B_{\d'\!/2}(v_1)\cap B_{\d'\!/2}(v_2)=\emptyset$,
	\item for any vertex $v\in S\cap [X_\e]_\d$ and any $i\in I$, we have $B_{\d'\!/2}(v)\cap C(r_i,\lambda')=\emptyset$,
where $\lambda'= e^{-\d'\!/2} 2 \sinh \e$,
	\item for any vertex $v\in S\cap [X_\e]_\d$ and any $j\in J_\d$, we have
$B_{\d'\!/2}(v)\cap C(\g_j,h_j-\d'\!/2)=\emptyset$.
\end{itemize}
Let $A'=\frac{\d'}{\sinh (\d'\!/2)}\sqrt{\sinh^2\!\e -\sinh^2 (\d'\!/2)}$ and
$K=\min\{4\pi \sinh^2(\d'\!/4), \lambda',A'\} $.
Lemmas \ref{l:area2} (with $\d_0 = \d'/2$) and \ref{l:circulosycircunferencias} give
$\d'\!/2<h_j$ and $A(C(\g_j,h_j-\d'/2)) > A'$ for every $j\in J_\d$, and
\[K\, | S | \leq 4\pi \sinh^2\frac{\d'}{4}\, \big|S\cap [X_\e]_\d\big| + \lambda' \, \big|S\cap \{w_i\}_{i\in I} \big| + A' \, \big|S\cap \{v_j\}_{j\in J_\d} \big| \leq A(\Omega).\]

Now, let $\p'\!S=\partial (\G_\d(X) \backslash S)$.
By Lemmas \ref{l:circulosycircunferencias} and \ref{l:dc},
$$
\begin{aligned}
L(\partial{\Omega})
& \leq  \sum_{v\in \p'\!S\cap [X_\e]_\d} L(\partial B_\d(v))
+ \sum_{\{ i\in I \,:\,w_i \in \p'\!S \}} L(\p C_i)
+ \sum_{\{ j\in J_\d \,:\,v_j \in \p'\!S \}} L(\p K_j)
\\
& \leq 2\pi \sinh \d \, \big|\p'\!S\cap [X_\e]_\d\big|
+ 2\sinh \e \, \big|\p'\!S\cap \{w_i\}_{i\in I} \big|
+ 4\sinh \e \, \big|\p'\!S\cap \{v_j\}_{j\in J_\d} \big|
\\
& \leq \max\{2\pi \sinh \d,4\sinh \e \}\, | \p'\!S|.
\end{aligned}
$$
Since, by Proposition \ref{p:uniform}, $\Gamma_{\delta}(X)$ is $\mu$-uniform for some constant $\mu >0$, we have $| \p'\! S|\leq \mu \, |\partial S|$.
Therefore, \[L(\partial \Omega)\leq \max\{2\pi \sinh \d,4\sinh \e \} \,\mu \, |\partial S|.\]

Thus, since $X$ satisfies LII, there is some constant $h$ such that $A(\Omega)\leq h \, L(\partial \Omega)$ and, hence,
\[| S| \leq \frac{1}{K} \, A(\Omega) \leq \frac{h}{K} \, L(\partial \Omega)\leq \frac{h \mu}{K}  \max\{2\pi \sinh \d,4\sinh \e \}  \, | \partial S|, \]
and $\Gamma_{\delta}(X)$ satisfies LII.

\smallskip

Assume now that $\Gamma_{\delta}(X)$ satisfies LII. By Lemma  \ref{l:FR1}, it suffices to check that $X$ satisfies LII on every geodesic domain. Let $G$ be any geodesic domain and consider the set $S_G\subset V(\Gamma_{\delta}(X))$.
Then, since
$$
G\subset \Big(\cup_{v\in S_G\cap [X_\e]_\d}B_{2\d}(v)\Big)
\cup \Big( \cup_{\{ i\in I \,:\,w_i \in S_G \}} C_i \Big)
\cup \Big( \cup_{\{ j\in J_\d \,:\,v_j \in S_G \}} K_j \Big),
$$
we have, by Lemmas \ref{l:circulosycircunferencias}, \ref{l:0} and \ref{l:area},
\[
\begin{aligned}
A(G)
& \leq 4\pi \sinh^2 \!\d\, \big|S_G\cap [X_\e]_\d\big| + 2\sinh \e \big|S_G\cap \{w_i : \, i\in I\}\big| +
4\sinh \e \big|S_G\cap \{v_j : \, j\in J_\d\}\big|
\\
& \leq \max\{4\pi \sinh^2\!\d, 4\sinh \e \} \, |S_G|.
\end{aligned}
\]

For any connected component $\gamma$ of $\partial_{2\d} G$ consider a maximal $\d$-separated set $\{x_1,\dots,x_n\}$. Notice that $n\leq \frac{L(\g)}{\d}$.
By Remark \ref{O: dist}, for every point $v\in \partial_{2\d} S_G$, $d(v,\p_{2\d} G)\leq 2\d$ and since $\{x_1,\dots,x_n\}$ is a maximal $\d$-separated set, there is some $x_i$ such that $d(v,x_i)<3\d$.

Therefore, by Lemma \ref{l: growth}, there exists a constant $\mu(\d)$ such that
\begin{equation}\label{eq:G1} |\partial_{2\d} S_G| \leq \sum_{\g \subseteq \partial_{2\d} G}\mu(\d)\frac{L(\g)}{\d}=
\frac{\mu(\d)}{\d} L(\partial_{2\d} G).
\end{equation}

Also, by Remark \ref{r: boundary}, if $w\in \partial S_G \setminus \partial_{2\d} S_G$, then $w$ is adjacent to some $v_j$ with $j\in J_\d$ and, by Proposition \ref{p:uniform}, it follows that for every such $v_j$ there exist at most $\D$ vertices adjacent to it. Hence,
\begin{equation}\label{eq:G2}
 |\partial S_G\setminus \partial_{2\d} S_G|\leq \D CC(\partial G \setminus \partial_{2\d}G).
\end{equation}

Since $X$ is $(2\d_2)$-regular and $0<\d<\d_2$, $X$ is also $(2\d)$-regular and there exists $c>0$ such that $L(\partial_{2\d} G) \ge c \,CC(\partial G \setminus \partial_{2\d}G)$.
Therefore, by inequalities \eqref{eq:G1} and \eqref{eq:G2}, it follows that
\[
\begin{aligned}
|\partial S_G|= & |\partial_{2\d} S_G|+ |\partial S_G\setminus \partial_{2\d} S_G|\leq \frac{\mu(\d)}{\d} L(\partial_{2\d} G) + \D CC(\partial G \setminus \partial_{2\d}G)\\
& \leq  \frac{\mu(\d)}{\d} L(\partial_{2\d} G) + \frac{\D}{c} L(\partial_{2\d}G)\leq  \Big(\frac{\mu(\d)}{\d} + \frac{\D}{c} \Big)L(\partial G).
\end{aligned}
\]

Thus, since $\Gamma_{\delta}(X)$ satisfies LII, there is some constant $h$ such that $| S_G| \leq h \, |\partial S_G|$, and so,
\[
\begin{aligned}
A(G) & \leq  \max\{4\pi \sinh^2\!\d, 4\sinh \e\}\, | S_G|
\leq \max\{4\pi \sinh^2\!\d, 4\sinh \e\} h \, |\partial S_G| \\
& \leq \max\{4\pi \sinh^2\!\d, 4\sinh \e\} \Big(\frac{h\mu(\d)}{\d} + \frac{h\D}{c} \Big) L(\partial G).
\end{aligned}
\]
Hence, $X$ satisfies LII on every geodesic domain, finishing the proof.
\end{proof}

\begin{remark}
Note that the hypothesis of regularity in Theorem \ref{th:graph} is not restrictive at all,
since Theorem \ref{t:regular} gives that non-regular surfaces do not have LII.
\end{remark}

\section{Gromov boundary and LII}

We present in this section an application of Theorem \ref{th:graph} relating Gromov boundary and LII (see Theorem \ref{th:sufic}).
First of all, we need some background on Gromov hyperbolicity.

Let $X$ be a metric space. Fix a base point $o\in X$ and for
$x,x'\in X$ let
$$(x|x')_o=\frac{1}{2}\big(d(x,o)+d(x',o)-d(x,x')\big).$$
The number $(x|x')_o$ is non-negative and it is called the \emph{Gromov product} of $x,x'$ with respect to $o$.

\begin{definition} A metric space $X$ is \emph{(Gromov)
hyperbolic} if it satisfies the $\delta$-inequality
\[(x|y)_o\geq \min\{(x|z)_o,(z|y)_o\}-\delta\] for some $\delta\geq
0$, for every base point $o\in X$ and all $x,y,z \in X$.
\end{definition}


If $X$ is a metric space and $I\subset \RR$ is an interval, we say that the curve $\g:I\longrightarrow X$ is a \emph{geodesic} if we have $L(\g|_{[t,s]})=d(\g(t),\g(s))=|t-s|$ for every $s,t\in [a,b]$
(then $\gamma$ is equipped with an arc-length parametrization).
A \emph{geodesic ray} is a geodesic defined on the interval $[0,\infty)$.

A \emph{geodesic metric space} is a metric space such that for every couple of points there exists a geodesic joining them.

Let us recall the following from \cite{MR}.

\begin{definition}  A geodesic metric space $X$ has a \emph{pole} in a point $v$ if there
exists $M > 0$ such that each point of $X$ lies in an $M$-neighborhood of some geodesic
ray emanating from $v$.
\end{definition}

Let $X$ be a hyperbolic space and $o\in X$ a base point.

The \emph{relative geodesic boundary} of $X$ with respect to the base point $o$ is the set of equivalence classes
\[
\partial_o^g X := \{ [\gamma]  \, | \, \gamma: [0,\infty) \to X \mbox{ is a geodesic ray with } \gamma(0) = o\},
\]
where two geodesic rays $\gamma_1, \gamma_2$ are equivalent if there exists some $K>0$ such that
$d(\gamma_1(t),\gamma_2(t))<K$, for every $t\geq 0.$

In fact, the definition above is independent from the base point.
Therefore, the set of classes of geodesic rays is called \emph{geodesic boundary} of $X$, $\partial^gX$.  Herein, we do not distinguish between the geodesic ray and its image.

A sequence of points $\{x_i\}\subset X$ \emph{converges to infinity}
if \[\lim_{i,j\to \infty} (x_i|x_j)_o=\infty.\] This property is
independent of the choice of $o$ since
\[|(x|x')_o-(x|x')_{o'}|\leq d(o,o')\] for any $x,x',o,o' \in X$.

Two sequences $\{x_i\},\{x'_i\}$ that converge to infinity are
\emph{equivalent} if \[\lim_{i\to \infty} (x_i|x'_i)_o=\infty.\]
Using the $\delta$-inequality, we easily see that this defines an
equivalence relation for sequences in $X$ converging to infinity.
The \emph{sequential boundary at infinity} $\partial_\infty X$ of $X$ is
defined to be the set of equivalence classes of sequences
converging to infinity.

Note that given a geodesic ray $\gamma$, the sequence $\{\gamma(n)\}$ converges to infinity and two equivalent rays induce equivalent sequences.
Thus, in general, $\partial^g X\subseteq \partial_\infty X$.

We say that a metric space is \emph{proper} if every closed ball is compact.
Every uniform graph and every complete Riemannian manifold are proper geodesic metric spaces.

\begin{proposition}\cite[Chapter III.H, Proposition 3.1]{BH}\label{Prop: equiv_boundary}
If $X$ is a proper hyperbolic geodesic metric space, then the natural map from $\partial^g X$ to $\partial_\infty X$ is a bijection.
\end{proposition}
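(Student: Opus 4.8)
I sketch the standard argument (see \cite[III.H]{BH}). Write $\Phi\colon\partial^gX\to\partial_\infty X$ for the natural map, sending a geodesic ray $\gamma$ to the equivalence class of the sequence $\{\gamma(n)\}_n$; as already observed above, equivalent rays yield equivalent sequences, so $\Phi$ is well defined. Throughout, I fix a base point $o\in X$, let $\delta$ be a hyperbolicity constant of $X$, and --- using that $X$ is geodesic and that $\partial^g_oX$ is independent of the base point --- represent every class in $\partial^gX$ by a ray issuing from $o$. It then remains to prove that $\Phi$ is injective and surjective.

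For injectivity, the plan is to invoke the fellow--traveller property of $\delta$-hyperbolic spaces: if $c\colon[0,a]\to X$ and $c'\colon[0,a']\to X$ are geodesics with $c(0)=c'(0)=o$, then $d(c(t),c'(t))$ is bounded by a constant depending only on $\delta$ whenever $t\le (c(a)|c'(a'))_o$ (a direct consequence of thin triangles, or of the $\delta$-inequality for the Gromov product). Given rays $\gamma_1,\gamma_2$ from $o$ with $\Phi([\gamma_1])=\Phi([\gamma_2])$, that is, $(\gamma_1(n)|\gamma_2(n))_o\to\infty$, I would apply this estimate to the subsegments $\gamma_1|_{[0,n]}$ and $\gamma_2|_{[0,n]}$ and let $n\to\infty$, getting a uniform bound on $d(\gamma_1(t),\gamma_2(t))$ for all $t\ge0$; hence $\gamma_1$ and $\gamma_2$ are equivalent.

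For surjectivity, let $\{x_i\}$ converge to infinity. First I would note that $d(o,x_i)\to\infty$, since $(x_i|x_j)_o\le\min\{d(o,x_i),d(o,x_j)\}$. Choose geodesic segments $\sigma_i=[o,x_i]$; parametrized by arclength from $o$ they are $1$-Lipschitz maps of intervals $[0,\ell_i]$ with $\ell_i\to\infty$. Here \emph{properness} is essential: closed balls about $o$ are compact, so Arzel\`{a}--Ascoli, applied with a diagonal argument over the intervals $[0,m]$ ($m\in\NN$), yields a subsequence $\sigma_{i_k}$ converging uniformly on compact sets to a geodesic ray $\gamma$ with $\gamma(0)=o$. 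Finally I would verify $\Phi([\gamma])=[\{x_i\}]$: for fixed $m$ and $k$ large, $\gamma(m)$ is uniformly close to $\sigma_{i_k}(m)$, which lies on $[o,x_{i_k}]$ once $\ell_{i_k}\ge m$, so $(\sigma_{i_k}(m)|x_{i_k})_o=m$; the $\delta$-inequality then gives $(\gamma(m)|x_{i_k})_o\ge m-\delta-1$ for all large $k$. Combined with the hypothesis that $\{x_i\}$ converges to infinity, this forces $\lim_{m,i}(\gamma(m)|x_i)_o=\infty$, i.e. $\{\gamma(n)\}$ and $\{x_i\}$ are equivalent.

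The main obstacle is the surjectivity step. One genuinely needs to extract an \emph{honest} geodesic ray (not merely a quasi-geodesic one), and this is exactly where properness cannot be dispensed with --- in a non-proper hyperbolic space the conclusion can fail. The remaining delicate point is the bookkeeping with Gromov products needed to certify that the ray produced by the compactness argument represents the originally prescribed point of $\partial_\infty X$ rather than some other boundary point.
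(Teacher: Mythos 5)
This statement is cited from \cite[Chapter III.H, Proposition 3.1]{BH}; the paper itself supplies no proof, so there is no internal argument to compare against. Your sketch is correct and is essentially the standard Bridson--Haefliger argument: injectivity via the fellow-traveller estimate for geodesics issuing from a common basepoint (a direct consequence of the $\delta$-inequality applied to $\gamma_1(t)$, $\gamma_2(t)$), and surjectivity via Arzel\`a--Ascoli applied to the segments $[o,x_i]$ --- with properness used exactly where you say, to extract a limiting geodesic ray --- followed by the Gromov-product bookkeeping (one further application of the $\delta$-inequality together with $(x_{i_k}|x_i)_o\to\infty$) that identifies the limit ray's class with $[\{x_i\}]$.
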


For every $\xi, \xi' \in \partial_\infty X$, its Gromov product
with respect to the base point
$o\in X$ is defined as
\[ (\xi|\xi')_o =  \inf \ \liminf_{i\to \infty} (x_i|x'_i)_o,\]
where the infimum is taken over all sequences $\{x_i\} \in \xi $, $\{x'_i\} \in \xi' $.

A metric $d$ on the sequential boundary at infinity
$\partial_\infty X$ of $X$ is said to be \emph{visual}, if there are $o\in X$, $a > 1$ and positive
constants $c_1$, $c_2$, such that
\[c_1a^{-(\xi|\xi')_o} \leq  d(\xi, \xi') \leq c_2a^{-(\xi|\xi')_o}\]
for all $\xi, \xi' \in \partial_\infty X$. In this case, we say that $d$ is a visual metric
with respect to the base point $o$ and the parameter $a$.

\begin{theorem}\cite[Theorem 2.2.7]{BS} Let $X$ be a hyperbolic space. Then for any $o \in X$,
there is $a_0 > 1$ such that for every $a \in (1, a_0]$ there exists a metric $d$ on
$\partial_\infty X$, which is visual with respect to $o$ and $a$.
\end{theorem}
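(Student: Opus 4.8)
\medskip

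The statement is the Buyalo--Schroeder construction of a visual metric on $\partial_\infty X$, and the plan is to realize $d$ as a ``chain regularization'' of the function $\rho_a(\xi,\xi')=a^{-(\xi|\xi')_o}$. The point is that $\rho_a$ is visual for $o$ and $a$ \emph{by construction}, but it may fail the triangle inequality; it only satisfies a relaxed (quasi-ultrametric) inequality, and this defect can be repaired by a Frink-type metrization precisely when $a$ is close enough to $1$.

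First I would extend the Gromov product to the boundary and check that it is almost an ultrametric. Using the $\delta$-inequality on $X$ applied to triples $(x_i,x_j,x_k')$ with $\{x_i\}\in\xi$, $\{x_i'\}\in\xi'$, together with a diagonal argument, one gets $(\xi|\xi')_o\le\liminf_i (x_i|x_i')_o\le(\xi|\xi')_o+2\delta$ for any such sequences; passing to the limit in the $\delta$-inequality then yields
\[
(\xi|\xi'')_o\ \ge\ \min\{(\xi|\xi')_o,(\xi'|\xi'')_o\}-2\delta
\]
for all $\xi,\xi',\xi''\in\partial_\infty X$, and $(\xi|\xi')_o<\infty$ whenever $\xi\ne\xi'$ (the exact multiple of $\delta$ is unimportant; it only affects $a_0$). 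Setting $\rho_a(\xi,\xi')=a^{-(\xi|\xi')_o}$ for $\xi\ne\xi'$ and $\rho_a(\xi,\xi)=0$, this gives $\rho_a(\xi,\xi'')\le a^{2\delta}\max\{\rho_a(\xi,\xi'),\rho_a(\xi',\xi'')\}$, so $\rho_a$ is a symmetric, positive-off-diagonal quasi-metric with multiplicative constant $K=a^{2\delta}$. Then I would choose $a_0>1$ so that $a^{2\delta}$ stays below the threshold required by the metrization lemma (e.g.\ $a^{2\delta}\le\sqrt2$) for all $a\in(1,a_0]$.

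Next, define $d$ by the chain construction:
\[
d(\xi,\xi')=\inf\Big\{\ \sum_{i=1}^{n}\rho_a(\xi_{i-1},\xi_i)\ :\ n\in\NN,\ \xi_0=\xi,\ \xi_n=\xi'\ \Big\}.
\]
It is immediate that $d$ is symmetric, vanishes on the diagonal, satisfies the triangle inequality (concatenate chains), and $d\le\rho_a$ (use the trivial one-step chain). The substantive claim — and the step I expect to be the real obstacle — is the reverse estimate: there is $C=C(K)\ge1$ with $C(K)\to1$ as $K\to1$ such that $\rho_a(\xi,\xi')\le C\,d(\xi,\xi')$ for all $\xi,\xi'$, i.e.\ every chain from $\xi$ to $\xi'$ has $\rho_a$-length at least $C^{-1}\rho_a(\xi,\xi')$. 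I would prove this by induction on the chain length $n$: split the chain at the first index $m$ where the partial sum reaches half of the total $\rho_a$-length, apply the quasi-ultrametric inequality to $\rho_a(\xi,\xi_{m-1})$, $\rho_a(\xi_{m-1},\xi_m)$, $\rho_a(\xi_m,\xi')$, and invoke the inductive bound on the two shorter sub-chains. The bookkeeping is exactly what forces the threshold on $K$ and produces $C$; everything around it is soft.

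Finally, combining $C^{-1}\rho_a\le d\le\rho_a$ with $\rho_a(\xi,\xi')=a^{-(\xi|\xi')_o}$ gives $C^{-1}a^{-(\xi|\xi')_o}\le d(\xi,\xi')\le a^{-(\xi|\xi')_o}$, so $d$ is visual with respect to $o$ and $a$ with constants $c_1=C^{-1}$, $c_2=1$; moreover $d(\xi,\xi')>0$ for $\xi\ne\xi'$ since then $(\xi|\xi')_o<\infty$, so $d$ is a genuine metric and not merely a pseudometric. Taking $a_0$ as chosen above finishes the argument.
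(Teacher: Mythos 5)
The paper states this result as a citation to Buyalo--Schroeder~\cite{BS}, Theorem 2.2.7, and does not supply a proof of its own, so there is no in-paper argument to compare against. Your proposal is the standard proof, and it is in fact the argument given in the cited reference: extend the Gromov product to $\partial_\infty X$ and show it inherits the $\delta$-ultrametric-type inequality (possibly with a worse constant), observe that $\rho_a=a^{-(\cdot|\cdot)_o}$ is then a symmetric quasi-metric with multiplicative defect $K=a^{c\delta}$, pick $a_0$ so that $K$ stays below the Frink threshold, and regularize $\rho_a$ by the chain construction. The two-sided comparison $C^{-1}\rho_a\le d\le\rho_a$ is exactly visuality, and the chain-splitting induction you sketch (cut at the last index where the partial $\rho_a$-length is at most half the total, then apply the quasi-ultrametric bound twice and the inductive hypothesis on the two halves) is the correct mechanism and the genuine crux; the remaining steps are routine. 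One small note: your bound $(\xi|\xi'')_o\ge\min\{(\xi|\xi')_o,(\xi'|\xi'')_o\}-2\delta$ is safe but slightly lossy -- since $\liminf\min\{a_i,b_i\}\ge\min\{\liminf a_i,\liminf b_i\}$ and the definition of the boundary Gromov product takes an infimum only over the two outer sequences, the same $\delta$ actually survives to the boundary; this only changes the numerical value of $a_0$ and does not affect the argument.
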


\begin{remark} Notice that for any visual metric, $\partial_\infty X$ is bounded and complete.
\end{remark}

\begin{definition} Given a metric space $(X,d)$ and a constant $S>1$, $(X,d)$ is \emph{$S$-uniformly perfect} if there exists some $\varepsilon_0>0$ such that for every $x\in X$ and every $0<\varepsilon \le \varepsilon_0$ there exists a point $y\in X$ such that
$\frac{\varepsilon}{S} <d(x,y) \le \varepsilon$. $(X,d)$ is \emph{uniformly perfect} if there exists some $S$ such that $(X,d)$ is $S$-uniformly perfect.
\end{definition}

\begin{theorem}\cite[Theorem 4.15]{MR} \label{t:iigraph}
Given a hyperbolic uniform graph $\Gamma$ with a pole, then $\Gamma$ has LII if and only if $\partial_\infty \Gamma$ is uniformly perfect for some visual metric.
\end{theorem}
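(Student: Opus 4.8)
The plan is to convert both sides of the equivalence into a single combinatorial property of the geodesic rays emanating from the pole, and then establish the two implications through that bridge. Fix the pole $v$ as base point. Since a $\mu$-uniform graph is proper, Proposition \ref{Prop: equiv_boundary} identifies $\partial^g\Gamma$ with $\partial_\infty\Gamma$, and I would work with one fixed visual metric $d$ of parameter $a>1$ based at $v$; as any two visual metrics are snowflake-equivalent and uniform perfectness is invariant under snowflaking, uniform perfectness for some visual metric is equivalent to uniform perfectness for $d$. The central device is the \emph{shadow} $Sh(x):=\{\xi\in\partial_\infty\Gamma:(\,\xi\,|\,x\,)_v\ge d(v,x)-C_0\}$ of a vertex $x$, where $C_0=C_0(\delta,M)$ is chosen so that standard $\delta$-hyperbolic estimates make $Sh(x)$ comparable — up to multiplicative constants depending only on $\delta,a,M$ — to the $d$-ball of radius $a^{-d(v,x)}$ about any point it contains, and so that every $d$-ball is trapped between two shadows of vertices whose distances to $v$ differ from $\log_a(\text{radius})^{-1}$ by a bounded amount. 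The pole hypothesis enters precisely here: it guarantees that for every $n$ the shadows $\{Sh(x):d(v,x)=n\}$ cover $\partial_\infty\Gamma$, so the family of all shadows is a genuine ``tree of subsets'' of the boundary, refining at every scale.

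Via this dictionary, $\partial_\infty\Gamma$ is uniformly perfect if and only if there is a constant $C_1$ such that the shadow of every vertex $x$ with $d(v,x)$ large contains two \emph{disjoint} shadows $Sh(y_1),Sh(y_2)$ with $d(v,y_i)\le d(v,x)+C_1$; equivalently, no geodesic ray from $v$ contains an arbitrarily long ``unbranched corridor'', where the corridor around a geodesic segment is the set of vertices within $M$ of it. Call this property \emph{uniform branching}; I would then prove that $\Gamma$ has LII if and only if $\Gamma$ is uniformly branching. One direction is easy: if uniform branching fails, take $x_n$ and a geodesic segment issuing from $x_n$ away from $v$ of length $\ell_n\to\infty$ whose $M$-corridor contains no branching, and let $A_n$ be the set of vertices $w$ with $Sh(w)\subseteq Sh(x_n)$ and $d(v,w)\le d(v,x_n)+\ell_n$. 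Absence of branching forces $A_n$ to be exactly the corridor between levels $d(v,x_n)$ and $d(v,x_n)+\ell_n$, which by $\mu$-uniformity has at most $\mu^{2M+2}$ vertices per level, so $|A_n|\le C\ell_n$, while $\partial A_n$ sits only at the two ends of the corridor and has $O(1)$ vertices; hence $|\partial A_n|/|A_n|\to 0$ and $h(\Gamma)=0$.

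The substance is the converse, uniform branching $\Rightarrow h(\Gamma)>0$. I would realize the skeleton of $\Gamma$ — the union of all geodesic rays from $v$, which is $M$-dense by the pole hypothesis — as a thickening of a tree $T$, whose vertices are confluence points of rays and whose degrees are bounded above in terms of $\delta,M,\mu$; uniform branching says precisely that a $C_1$-coarsely dense set of vertices of $T$ have at least two children within $C_1$ generations. Such a tree satisfies LII: the map sending a vertex to a leaf below it obtained by always descending into a branching sub-shadow compresses $T$ onto its boundary with multiplicity bounded in terms of $C_1$ and the maximal degree, which forces $h(T)>0$. Since $\Gamma$ and $T$ are both uniform and quasi-isometric, Kanai's theorem (\cite{K}) transfers LII from $T$ to $\Gamma$. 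The main obstacle is exactly this last circle of ideas: constructing the quasi-isometry $\Gamma\simeq T$ and, above all, translating the metric statement ``$Sh(x)$ splits within depth $C_1$'' into the combinatorial statement ``the $T$-vertex at level $\approx d(v,x)$ has two descendants within $C_1$ generations''. This is where the hyperbolicity constant bites (two rays fellow-travel only up to their Gromov product, with error $O(\delta)$) and where one must pass carefully between the continuous ``ray picture'' and the discrete graph; it is, however, a standard tree-approximation argument for hyperbolic spaces, and in principle one could avoid $T$ altogether by running the compression directly on $\Gamma$ — for each finite $A$ and each $x\in A$, follow geodesics from $x$ into the two disjoint sub-shadows supplied by uniform branching until they leave $A$, using disjointness of shadows together with $\mu$-uniformity to bound how many $x$ are routed to a common vertex of $\partial A$.
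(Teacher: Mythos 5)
The paper does not prove this result; it imports it as \cite[Theorem 4.15]{MR}, so there is no internal proof to compare with, and your proposal has to be judged on its own terms. The opening moves are reasonable: the shadow formalism, the translation of uniform perfectness of $\partial_\infty\Gamma$ into a quantitative branching condition on rays from the pole, and the corridor construction for the implication ``no uniform branching $\Rightarrow$ $h(\Gamma)=0$''. Even there, though, the claim that $\partial A_n$ consists only of $O(1)$ vertices at the two ends of the corridor is not automatic: you must rule out boundary vertices leaking out at intermediate levels, and this uses the pole hypothesis together with a choice of $C_0$ dominating $M$ in a way your sketch leaves tacit.

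The substantive gap is in the converse. You propose to realize $\Gamma$ as a quasi-isometric thickening of a locally finite tree $T$ and then transfer LII from $T$ via Kanai. But a hyperbolic uniform graph with a pole and uniformly perfect boundary need not be quasi-isometric to any tree. The Cayley graph of a closed hyperbolic surface group is uniform, $\delta$-hyperbolic, has a pole, has uniformly perfect boundary (a circle), and satisfies LII, yet it is not quasi-isometric to a tree: quasi-isometries induce homeomorphisms of Gromov boundaries, the boundary of every tree is totally disconnected, and $S^1$ is connected. So the tree $T$ you need does not exist, and the appeal to Kanai is vacuous. Your fallback --- running the compression directly on $\Gamma$, routing each $x\in A$ along branching geodesics to exit vertices of $\partial A$ --- is the right idea, but the crucial estimate, that rays issued from different vertices of $A$ into disjoint sub-shadows hit exit vertices of $\partial A$ with uniformly bounded multiplicity, is exactly where hyperbolicity and the disjointness of shadows must be turned into a quantitative bound; that is the mathematical content of the theorem, and your sketch does not supply it.
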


\smallskip

Let $X$ be a regular non-exceptional Riemann surface and consider some constants $0<\d<\e$ such that Theorem \ref{th:graph} holds.
Then, for every collar $C(\g_j,h_j)$ of every simple closed geodesic $\g_j$ with $L(\g_j)<2\d$, consider $\partial C(\g_j,h_j)=\eta^j_1\cup \eta^j_2$ (where $\eta^j_i$ are Jordan curves) and let us define the (non-smooth) surface $[X]/\sim_\d$ where $C(\g_j,h_j)$ is removed, $\eta^j_1$ is glued along $\eta^j_2$ by identifying the intersection in both curves with the same pencil geodesic, and
the collars about the cusps $\{ C(r_i, 2 \sinh \e) \}_{i \in I}$ are removed.
Consider on $[X]/\sim_\d$ the \emph{inner distance} inherited from $X$:
$$
d_{[X]/\sim_\d} (x,y) = \inf \big\{ L(\g) : \, \g \text{ is a curve in } [X]/\sim_\d \text{ joining $x$ and } y \big\},
$$
where $L$ denotes the restriction to $[X]/\sim_\d$ of the Poincar\'e length in $X$.
One can check that the infimum in this definition is, in fact, a minimum, and that $[X]/\sim_\d$ is a proper geodesic metric space.

\begin{theorem} \label{Th:qi-graph}
Given a non-exceptional Riemannian surface $X$, $0 < \varepsilon < \Arcsinh 1$ and $0<\d< \d_1$, then $[X]/\sim_\d$ and $\G_\d(X)$ are quasi-isometric.
\end{theorem}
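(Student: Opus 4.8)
The plan is to exhibit an explicit quasi-isometry $\Phi\colon V(\Gamma_\delta(X))\to Y$, where $Y:=[X]/\sim_\d$, and to check the defining inequalities piece by piece. Recall that $Y$ is obtained from $[X_\e]_\d$ (which is $X$ with the cusp collars $C_i=C(r_i,2\sinh\e)$ and the short collars $K_j=C(\g_j,h_j)$, $j\in J_\d$, deleted) by gluing $\eta^j_1$ to $\eta^j_2$ for each $j\in J_\d$; write $\pi\colon[X_\e]_\d\to Y$ for the quotient map. Since $X$ is connected, so are $Y$ and $\Gamma_\delta(X)$, both are proper geodesic metric spaces, and a graph is quasi-isometric to its vertex set with the path metric, so it suffices to study $\Phi$. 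Define $\Phi(a)=\pi(a)$ for $a\in[A]_\d$, $\Phi(w_i)=\pi(p_i)$ for a fixed $p_i\in\s_i$, and $\Phi(v_j)=\pi(q_j)$ for a fixed $q_j\in\eta^j_1$, so that $\Phi(v_j)$ lies on the glued curve $\pi(\eta^j_1)=\pi(\eta^j_2)$.

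I would first show $\Phi$ is coarsely Lipschitz and full. For an edge $xy$ of $\Gamma_\delta(X)$ there are three cases. If $x,y\in[A]_\d$ lie in a common net $\Gamma_{[A]_\d^r}$, then $d_X(x,y)\le 2\d$; by Lemma~\ref{l:us} distinct deleted collars are at $X$-distance $\ge 2\log(1/\sinh\e)>2\d$, so the minimizing $X$-geodesic from $x$ to $y$ meets at most one of them, and such an excursion (which, for a $K_j$ with $j\in J_\d$, cannot cross $\g_j$, since that costs length $2h_j>2\d$) can be rerouted inside the same component of $[X_\e]_\d$ along the relevant boundary curve at extra cost $\le 2\sinh\e$ (a short hyperbolic estimate using convexity of cusp collars and the bound $<2\sinh\e$ for boundary lengths from Lemma~\ref{l:dc}), giving $d_Y(\Phi(x),\Phi(y))\le 2\d+2\sinh\e$. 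If $x=w_i$ (resp.\ $x=v_j$) is adjacent to $y\in[A]_\d$, then $d_X(y,\s_i)\le\d$ (resp.\ $d_X(y,\eta^j_1\cup\eta^j_2)\le\d$), and since the relevant curve has length $<2\sinh\e$, and — crucially in the second case — $\eta^j_1$ and $\eta^j_2$ become a single curve in $Y$, one again gets $d_Y(\Phi(x),\Phi(y))$ bounded by a uniform constant, whichever boundary component $y$ is near. Summing along paths yields $d_Y(\Phi(x),\Phi(y))\le C_1\,d_\Gamma(x,y)$. Fullness then follows from maximality of the $\d$-approximations: any $z\in Y$ is $\pi(\zeta)$ with $\zeta\in[X_\e]_\d^r$, there is $a\in[A]_\d^r$ with $d_X(\zeta,a)\le\d$, and the same rerouting gives $d_Y(z,\Phi(a))$ bounded; the curves $\s_i$ and $\pi(\eta^j)$ are covered within $O(1)$ by $\Phi(w_i)$ and $\Phi(v_j)$.

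For the other inequality, $d_\Gamma(x,y)\le C_2\,d_Y(\Phi(x),\Phi(y))+C_2$, take a $Y$-geodesic $\g$ from $\Phi(x)$ to $\Phi(y)$ of length $\ell$ and discretize it as $\Phi(x)=z_0,\dots,z_N=\Phi(y)$ with $d_Y(z_t,z_{t+1})\le\d$ and $N\le\ell/\d+1$. Attach to each $z_t$ a vertex $\psi(z_t)$, with $\psi(z_0)=x$, $\psi(z_N)=y$: if $z_t$ lies on some $\pi(\eta^j)$ set $\psi(z_t)=v_j$; if it lies on some $\s_i$ set $\psi(z_t)=w_i$; otherwise $z_t=\pi(\zeta)$ with $\zeta$ interior to a component $[X_\e]_\d^r$, and we pick $a\in[A]_\d^r$ with $d_X(\zeta,a)\le\d$ and set $\psi(z_t)=a$. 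The point is that $d_\Gamma(\psi(z_t),\psi(z_{t+1}))\le C_2$ uniformly. If the short geodesic $[z_t,z_{t+1}]$ stays in the closure of one component $[X_\e]_\d^r$, this is a Kanai-type comparison: $[X_\e]_\d^r$ has curvature $-1$ and, since $[X_\e]_\d\subseteq X_\d$, injectivity radius $\ge\d$ in $X$, hence uniformly bounded geometry; together with Lemma~\ref{l: growth} (which supplies the required local finiteness), the chain-of-$\d$-balls argument of Kanai \cite{K} shows $[A]_\d^r$ with its $\d$-net metric is quasi-isometric to $[X_\e]_\d^r$, uniformly in $r$, the $d_X$-distance used in the edge definition being matched to the intrinsic distance of the component by the rerouting estimate above. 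If instead $[z_t,z_{t+1}]$ crosses the gluing locus, it crosses exactly one curve $\pi(\eta^j)$ or $\s_i$, exactly once, and routing $\psi$ through the bridge vertex $v_j$ or $w_i$ (plus an application of the Kanai comparison on either side) gives the bound. Then $d_\Gamma(x,y)\le\sum_t d_\Gamma(\psi(z_t),\psi(z_{t+1}))\le C_2N\le C_2(\d^{-1}d_Y(\Phi(x),\Phi(y))+1)$.

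The main obstacle is justifying this last dichotomy, i.e.\ understanding $Y$-geodesics near the non-smooth gluing locus. Each glued curve $\pi(\eta^j)$ carries a line of concentrated negative curvature in $Y$ (the geodesic curvatures $\tanh h_j<1$ of $\eta^j_1$ and $\eta^j_2$ add up rather than cancel), so $Y$-geodesics meet it transversally and in isolated points; one must show that these curves and the cusp curves $\s_i$ admit uniformly wide tubular neighborhoods in $Y$ that are pairwise disjoint — here Lemma~\ref{l:us} and the fact that $\pi$ does not increase areas (so that the comparison estimates of Lemma~\ref{l:circulosycircunferencias} persist in $Y$) do the work — and that each of these curves has length $>2\d$ (which follows from $\sinh\e>\tfrac{4}{\sqrt3}\sinh\d$, valid because $\d<\d_1$, together with Lemma~\ref{l:dc}), so that a $Y$-geodesic of length $\le\d$ can neither wind around one of them nor meet two of them. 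Granting this, the discretization points between consecutive crossings lie in a single component $[X_\e]_\d^r$, the count above goes through, and $\Phi$ is a quasi-isometry.
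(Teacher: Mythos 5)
Your plan (build a map $\Phi\colon V(\Gamma_\d(X))\to Y=[X]/\sim_\d$ directly and check the two quasi-isometry inequalities) is in the right spirit, but it leaves a genuine gap exactly where you yourself flag ``the main obstacle.'' The paper structures the argument differently and in a way that makes that obstacle disappear: it introduces an intermediate graph $\G_\d(X)/\sim_\d$, obtained from $\G_\d(X)$ by deleting the special vertices $\{w_i\},\{v_j\}$ and joining directly any two former neighbours of the same $v_j$. It then proves one quantitative Claim --- for $v,w\in[A]_\d$, $d_\G(v,w)<\mu(\d)\big(d(v,w)/(2\d)+2\big)$ --- by covering a $Y$-geodesic with $\d$-balls and counting via Lemma~\ref{l: growth}. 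The new edges linking opposite sides of each glued curve are what let the ball-covering chain cross the gluing locus \emph{combinatorially}, so one never needs to understand the geometry of $Y$-geodesics near the non-smooth seam. This single Claim is then used twice: to show $\G_\d(X)\simeq\G_\d(X)/\sim_\d$ (bounding the $\G_\d(X)/\sim_\d$-distance between two neighbours of a $w_i$), and to show $\G_\d(X)/\sim_\d\simeq Y$ via the inclusion.

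The concrete gap in your version is in the reverse inequality. You discretize a $Y$-geodesic and assert $d_\Gamma(\psi(z_t),\psi(z_{t+1}))\le C_2$ uniformly, appealing for the within-one-component case to ``the chain-of-$\d$-balls argument of Kanai.'' But Kanai's comparison is stated for complete manifolds of bounded geometry, not for a bordered subsurface $[X_\e]_\d^r$ whose graph edges are declared using $d_X$ while the gaps $d_Y(z_t,z_{t+1})$ are measured in the quotient metric; $d_X$ and $d_Y$ are not comparable in either direction near the gluing curves, and turning your ``rerouting estimate'' into a precise comparison is exactly the content of the paper's Claim, which you do not prove. The accompanying analysis of concentrated curvature along $\pi(\eta^j)$, transversality of $Y$-geodesics, and tubular neighbourhoods is neither carried out rigorously nor needed once one adopts the intermediate-graph device. (A minor point: for the lower bound $>2\d$ on the lengths of the boundary and glued curves, $\sinh\e>\sinh\d$ already suffices; invoking $\sinh\e>\tfrac{4}{\sqrt3}\sinh\d$ is correct but not the relevant inequality.) To repair the proposal, replace the appeal to Kanai by an explicit covering-and-counting argument along the lifted geodesic, and route across each glued curve through the corresponding $v_j$ --- which is, in effect, to reinvent the paper's intermediate graph.
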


\begin{proof}
Let $\G_\d(X)/\sim_\d$ be the graph defined by $\G_\d(X)$ where vertices $\{w_i\}_{i\in I}$ and $\{v_j\}_{j\in J_\d}$ are removed and a new edge is defined between any pair of vertices adjacent to the same vertex $v_j$.
Notice that the vertex set of $\G_\d(X)/\sim_\d$ is $[A]_\d$.
Let us denote by $d_\G(v,w)$ the path distance in $\G_\d(X)/\sim_\d$ and by $d(v,w)$ the inner distance in $[X]/\sim_\d$.

Since $\{N(w_i)\}_{i\in I} \cup \{N(v_j)\}_{j\in J_\d}$
are pairwise disjoint sets by Remark \ref{r:117}, in order to prove
that $\G_\d(X)/\sim_\d$ is quasi-isometric to $\G_\d(X)$, it suffices to check that there is a constant $C>0$ such that given any two vertices, $u_1,u_2$, adjacent to some $v\in\{w_i\}_{i\in I}$ in $\G_\d(X)$, then $d_\G(u_1,u_2)<C$.


Claim: Given any pair of points $v,w\in [A]_\d$, $d_\G(v,w)<\mu(\d) \big(\frac{d(v,w)}{2\d}+2\big)$, where $\mu(\d)$ is the constant in Lemma \ref{l: growth}.
Consider any geodesic path $\g$ in $[X]/\sim_\d$ joining $v$ to $w$ and let $P_\g=\{u\in \G_\d(X)/\sim_\d \, \,: \, B_\d(v)\cap \g\neq \emptyset \}$.
It is clear that $\{B_\d(u) \, : \, u\in P_\g\}$ covers $\g$ and $d_\G(v,w)\leq |P_\g|$.
Denote by $\lceil t \rceil$ the upper integer part of $t$, i.e., the smallest integer greater than or equal to $t$.
Let $k= \big\lceil \frac{d(v,w)}{2\d}\big\rceil$ and $v=x_0,\dots, x_k=w$ in $\g$ such that $d(x_{i-1},x_i)=\frac{d(v,w)}{k}\leq 2\d$.
Notice that for every vertex $u\in P_\g$, $u\in B_{2\d}(x_i)$ for some $0\leq i\leq k$.
Hence, $d_\G(v,w)\leq |P_\g|\leq \mu(\d) (k+1)<\mu(\d) \big(\frac{d(v,w)}{2\d}+2\big)$.

If $u_1,u_2$ are adjacent to some $v\in\{w_i\}_{i\in I}$, it follows by Lemma \ref{l:dc} that $d(u_1,u_2) \le 2\d+\sinh \e$.
Then, by the claim above $d_\G(u_1,u_2)<\mu(\d) \big(\frac{d(u_1,u_2)}{2\d}+2\big) \le \mu(\d)\big(\frac{\sinh \e}{2\d}+3\big)$ and $\G_\d(X)/\sim_\d$ is quasi-isometric to $\G_\d(X)$.

Let us see now that $\G_\d(X)/\sim_\d$ is quasi-isometric to $[X]/\sim_\d$.
Let $h\colon [A]_\d \to [X]/\sim_\d$ be the inclusion map.
Now, consider any two vertices $v,w \in [A]_\d$ and let $v=u_0,\dots,u_k=w$ be the shortest path in $\G_\d(X)/\sim_\d$ joining them.
For $1\leq i \leq k$, if $u_{i-1}u_i$ is also an edge in $\G_\d(X)$, then $d(u_{i-1},u_i)\leq 2\d$.
If $u_{i-1}u_i \notin E(\G_\d(X))$, then
$u_{i-1}$ and $u_i$ are adjacent to the same vertex $v_j$ in $\G_\d(X)$; therefore, since
the length of each connected component of $\p K_j$ is at most $2\sinh \e$ by Lemma \ref{l:dc}, $d(u_{i-1},u_i)\leq 2\d+\sinh \e$.
Thus, $d(u_{i-1},u_i)\leq 2\d+\sinh \e$ for every $1\leq i \leq k$ and $d(v,w)\leq (2\d+\sinh \e)\, k= (2\d+\sinh \e) \, d_\G(v,w)$.

By the claim above,  $d_\G(v,w)<\mu(\d) \big(\frac{d(v,w)}{2\d}+2\big)$. Hence,
$$ \frac{2\d}{\mu(\d)} \, d_\G(v,w)-4\d \leq d(v,w) \leq (2\d+\sinh \e) \, d_\G(v,w)$$
and $h$ is a $(\max\{\frac{\mu(\d)}{2\d},2\d+\sinh \e\},4\d)$-quasi-isometric embedding. It is trivial to check that $h$ is $\d$-full.
\end{proof}

\begin{theorem}\cite[p.88]{GH}\label{th: stability_hyp}
If $f:X \rightarrow Y$ is a quasi-isometry between geodesic metric spaces, then $X$ is hyperbolic if and only if $Y$ is hyperbolic.
Furthermore, if $f$ is a $c$-full $(a,b)$-quasi-isometry and $X$ (respectively, $Y$) is $\d$-hyperbolic, then $Y$ (respectively, $X$) is $\d'$-hyperbolic,
with $\d'$ a universal constant which just depends on
$\d$, $a$, $b$ and $c$.
\end{theorem}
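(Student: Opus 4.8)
The plan is to reduce the statement to two standard facts about $\d$-hyperbolic \emph{geodesic} spaces. The first is that, for geodesic spaces, the $\d$-inequality for Gromov products is equivalent to the Rips thin-triangles condition (every geodesic triangle is $\d''$-thin, i.e.\ each side lies in the $\d''$-neighbourhood of the union of the other two), the two constants differing only by a universal factor; so it suffices to work throughout with thin triangles. The second is the \emph{stability of quasi-geodesics} (Morse Lemma): in a $\d$-hyperbolic geodesic space, every $(a,b)$-quasi-geodesic segment lies in the $H$-neighbourhood of every geodesic segment with the same endpoints (and conversely), where $H=H(\d,a,b)$ is an explicit constant. Granting these, the argument is bookkeeping with constants.

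First I would handle the implication ``$Y$ is $\d$-hyperbolic $\Rightarrow$ $X$ is $\d'$-hyperbolic''. Take a geodesic triangle in $X$ with vertices $x_1,x_2,x_3$ and geodesic sides $[x_i,x_j]$, and let $p\in[x_1,x_2]$. The image $f([x_i,x_j])$ is an $(a,b)$-quasi-geodesic in $Y$; by the Morse Lemma it lies within $H=H(\d,a,b)$ of a geodesic $[f(x_i),f(x_j)]$ in $Y$, so $d_Y\big(f(p),[f(x_1),f(x_2)]\big)\le H$. Since the geodesic triangle on $f(x_1),f(x_2),f(x_3)$ is $\d$-thin, the point of $[f(x_1),f(x_2)]$ nearest $f(p)$ is within $\d$ of $[f(x_1),f(x_3)]\cup[f(x_2),f(x_3)]$, hence (Morse Lemma again) within $\d+H$ of $f([x_1,x_3])\cup f([x_2,x_3])$. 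Thus there is $p'\in[x_1,x_3]\cup[x_2,x_3]$ with $d_Y(f(p),f(p'))\le 2H+\d$, and the quasi-isometric lower bound for $f$ yields $d_X(p,p')\le a(2H+\d+b)$. Hence every geodesic triangle in $X$ is $a(2H+\d+b)$-thin, so $X$ is hyperbolic with a constant depending only on $\d,a,b$.

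For the reverse implication ``$X$ is $\d$-hyperbolic $\Rightarrow$ $Y$ is $\d'$-hyperbolic'' I would use that $f$ is $c$-full to build a quasi-inverse $g\colon Y\to X$: choose $g(y)\in X$ with $d_Y(f(g(y)),y)\le c$; a routine computation shows $g$ is an $(a',b')$-quasi-isometry with $a',b'$ depending only on $a,b,c$, with $d_Y(f(g(y)),y)\le c$ and $d_X(g(f(x)),x)\le a(b+c)$. Applying the previous paragraph to $g$ shows $Y$ is hyperbolic with constant depending on $\d,a',b'$, hence on $\d,a,b,c$; this is the only place where $c$ intervenes, so the final $\d'$ depends on $\d,a,b,c$ only. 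For the unquantified first assertion one simply notes that a quasi-isometry is $c$-full for some $c$, so both implications apply.

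The main obstacle is the Morse Lemma with an \emph{explicit} constant $H(\d,a,b)$. The standard proof — tame the quasi-geodesic to a continuous rectifiable path at bounded cost, then bound the distance from an interior point to the geodesic by comparing the length of a detour of the quasi-geodesic with an exponentially growing lower bound coming from $\d$-thinness (divergence of geodesics) — is technical but entirely classical and produces $H$ depending only on $\d,a,b$; this is the version recorded in \cite{GH} and \cite{BH}. The remaining work is purely formal: checking that the equivalence between the Gromov-product and thin-triangles formulations, the Morse constant, and the quasi-inverse estimates each introduce dependencies only on $\d$ (respectively on $a,b,c$), so that composing them gives the advertised universal $\d'=\d'(\d,a,b,c)$.
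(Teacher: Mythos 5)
The paper offers no proof of this theorem: it is stated as a citation to Ghys and de la Harpe \cite[p.~88]{GH}, so there is nothing of the paper's own to compare against. Your sketch is the standard argument from that source (and from Bridson--Haefliger, Theorem III.H.1.9): pass to the Rips thin-triangle characterization of hyperbolicity, invoke the Morse Lemma on stability of quasi-geodesics to prove that an $(a,b)$-quasi-isometric embedding into a $\d$-hyperbolic geodesic space $Y$ forces the source $X$ to be hyperbolic with constant depending only on $\d,a,b$, and then use $c$-fullness to manufacture a quasi-inverse and deduce the other implication. Your constant bookkeeping is correct, and so is the remark that $c$ enters only via the quasi-inverse, which is why only one direction of the equivalence sees it. The substantive content is concentrated in the Morse Lemma, which you rightly isolate as the crux; as you observe, its proof (taming the quasi-geodesic to a rectifiable path, then comparing detour length against the exponential divergence of geodesics forced by $\d$-thinness) is where the real work lives, and it is exactly what \cite{GH} and \cite{BH} supply with explicit constants.
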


\begin{proposition}\cite[Proposition 5.6]{MR} \label{Prop: qi-pole}
Suppose $X,Y$ are proper hyperbolic geodesic metric spaces and $f:X \to Y$ is a quasi-isometry.
If $X$ has a pole in $v$, then $Y$ has a pole in $f(v)$.
\end{proposition}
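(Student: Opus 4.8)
The plan is to unwind the definition of a pole and transport it through $f$, converting the images under $f$ of the geodesic rays from $v$ into genuine geodesic rays from $f(v)$ by means of the stability of quasi-geodesics in hyperbolic spaces.

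First I would fix the data: write $f$ as an $\e$-full $(\a,\b)$-quasi-isometry, let $M$ be a pole constant for $X$ at $v$, and let $\d$ be a hyperbolicity constant for $Y$. Given an arbitrary $y\in Y$, fullness provides some $x\in X$ with $d_Y(f(x),y)\le \e$, and the pole hypothesis on $X$ provides a geodesic ray $\g\colon[0,\infty)\to X$ with $\g(0)=v$ and $d_X(x,\g(t_0))\le M$ for some $t_0\ge 0$.

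Next I would note that $f\circ\g$ is an $(\a,\b)$-quasi-geodesic ray in $Y$ based at $f\circ\g(0)=f(v)$; since $\{f(\g(n))\}$ converges to infinity it determines a boundary point $\xi\in\partial_\infty Y$, and properness of $Y$ yields an actual geodesic ray $\s\colon[0,\infty)\to Y$ with $\s(0)=f(v)$ and $\s$ asymptotic to $\xi$. The crucial step is the Morse-type stability lemma: in a $\d$-hyperbolic proper geodesic space a geodesic ray and an $(\a,\b)$-quasi-geodesic ray with the same initial point and the same endpoint at infinity are within Hausdorff distance $H=H(\a,\b,\d)$. Hence some $\s(s_0)$ satisfies $d_Y(f(\g(t_0)),\s(s_0))\le H$, and then
\[
d_Y(y,\s(s_0))\le d_Y(y,f(x))+d_Y(f(x),f(\g(t_0)))+d_Y(f(\g(t_0)),\s(s_0))\le \e+(\a M+\b)+H.
\]
Setting $M':=\e+\a M+\b+H$, every $y\in Y$ lies in the $M'$-neighbourhood of a geodesic ray emanating from $f(v)$, which is precisely the assertion that $Y$ has a pole at $f(v)$.

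I expect the stability step to be the only delicate point: one must verify that $f\circ\g$ converges to a well-defined boundary point and that, using properness of $Y$, the geodesic ray from $f(v)$ to that point fellow-travels $f\circ\g$ with a constant depending only on $\a,\b,\d$, independent of the chosen $y$ and $\g$; this uniformity is what guarantees that the single constant $M'$ works simultaneously for all $y\in Y$. The remaining manipulations are just the quasi-isometry inequalities and the triangle inequality.
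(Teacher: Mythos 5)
The paper does not prove this proposition; it cites it from \cite[Proposition 5.6]{MR}, so there is no in-paper argument to compare against. Judged on its own, your proof is correct and uses the standard toolkit. The only place that deserves a word of care is the passage from the segment version of the Morse stability lemma to the ray version you invoke: the cleanest route is to take the geodesic segments $\s_n$ from $f(v)$ to $f(\g(n))$, apply the segment Morse lemma to conclude that $f\circ\g|_{[0,n]}$ and $\s_n$ are $H(\a,\b,\d)$-Hausdorff close, and then use properness and Arzel\`a--Ascoli to extract a geodesic ray $\s$ from $f(v)$ as a locally uniform limit of the $\s_n$; this produces the ray and the fellow-travelling bound in one step, without first having to identify the boundary point $\xi$ and then separately argue that any two geodesic rays from $f(v)$ asymptotic to $\xi$ are uniformly close (which would add another $\d$-dependent constant). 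Either way the constant depends only on $\a,\b,\d$, which is exactly the uniformity you correctly flag as the crucial point, and the final estimate $M'=\e+\a M+\b+H$ follows from the triangle inequality as you wrote. So the argument is sound; the refinement above is just the tidiest way to justify the ``stability step'' you single out as delicate.
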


The following result follows from \cite[Theorem 5.2.15]{BS} and \cite[Proposition 5.10]{MR}.

\begin{theorem} \label{Th: pq-symmetric} Let $f : X \to Y$ be a quasi-isometric map of proper hyperbolic
geodesic metric spaces. Then $\partial_\infty X$ is uniformly perfect if and only if $\partial_\infty Y$ is uniformly perfect
(with respect to any visual metrics).
\end{theorem}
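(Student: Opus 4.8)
The plan is to reduce the statement to two facts about the boundary map induced by a quasi-isometry. First I would recall that, since $X$ and $Y$ are proper hyperbolic geodesic metric spaces, a quasi-isometry $f\colon X\to Y$ extends to a homeomorphism $\partial f\colon \partial_\infty X\to \partial_\infty Y$ of the sequential boundaries (identified with the geodesic boundaries via Proposition \ref{Prop: equiv_boundary}); this is classical, see \cite{GH}. The quantitative input I need is \cite[Theorem 5.2.15]{BS}: once $\partial_\infty X$ and $\partial_\infty Y$ are equipped with visual metrics, $\partial f$ is a power quasisymmetric homeomorphism whose distortion function depends only on the quasi-isometry constants of $f$, the hyperbolicity constants of $X$ and $Y$, and the parameters and multiplicative constants of the two chosen visual metrics. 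In particular $(\partial f)^{-1}$ is again power quasisymmetric with a controlled distortion function.

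Next I would invoke \cite[Proposition 5.10]{MR}, which states that uniform perfectness is preserved under power quasisymmetric homeomorphisms: the quasisymmetric image of an $S$-uniformly perfect space is $S'$-uniformly perfect, with $S'$ depending only on $S$ and the distortion function. Applying this to $\partial f$ shows that if $\partial_\infty X$ is uniformly perfect for a chosen visual metric, then $\partial_\infty Y$ is uniformly perfect for the chosen visual metric on it; applying it to $(\partial f)^{-1}$ gives the converse implication. This already proves the equivalence for the two fixed visual metrics.

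Finally I would upgrade the conclusion to ``any visual metrics''. Any two visual metrics on the boundary of a fixed proper hyperbolic space are power quasisymmetrically equivalent (indeed bi-H\"older to one another), so by \cite[Proposition 5.10]{MR} again the property ``$\partial_\infty X$ is uniformly perfect'' does not depend on which visual metric is used, and likewise for $Y$; hence the equivalence holds for every choice of visual metrics on both sides. The only real obstacle here is bookkeeping across the two references: one must check that the notion of quasisymmetry used in \cite[Proposition 5.10]{MR} is compatible with the power quasisymmetric conclusion of \cite[Theorem 5.2.15]{BS}, and that the base points used to normalise the visual metrics play no role (they do not, since changing base point changes Gromov products by a bounded additive amount and visual metrics by a bi-Lipschitz factor). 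No new metric estimate is required beyond what those two results already supply.
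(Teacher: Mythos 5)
Your proposal is correct and follows exactly the route the paper intends: the paper gives no proof, merely noting that the theorem ``follows from \cite[Theorem 5.2.15]{BS} and \cite[Proposition 5.10]{MR},'' and your argument is precisely the fleshing-out of that remark — extend $f$ to a boundary homeomorphism, invoke \cite[Theorem 5.2.15]{BS} for power quasisymmetry of $\partial f$ and $(\partial f)^{-1}$, and invoke \cite[Proposition 5.10]{MR} for preservation of uniform perfectness, including the observation that independence of the visual metric and base point follows from the same two ingredients. The only point worth flagging is terminology: depending on the edition, \cite[Theorem 5.2.15]{BS} may phrase its conclusion as ``power quasi-M\"obius'' rather than ``power quasisymmetric,'' but since visual boundaries are bounded these notions coincide, so this does not affect the argument.
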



\begin{theorem} \label{th:sufic}
Let $X$ be a $(2\d_2)$-regular non-exceptional Riemann surface,
$0 < \varepsilon < \Arcsinh 1$ and $0<\d<\min\{\d_1,\d_2\}$.
Assume that $[X]/\sim_\d$ is hyperbolic and has a pole.
Then $X$ has LII if and only if $\partial_\infty ([X]/\sim_\d)$ is uniformly perfect.
\end{theorem}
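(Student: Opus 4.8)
The plan is to chain together the results already assembled in the paper. By Theorem \ref{th:graph}, under the stated hypotheses $X$ has LII if and only if $\Gamma_\delta(X)$ has LII, so it suffices to show that $\Gamma_\delta(X)$ has LII if and only if $\partial_\infty([X]/\!\sim_\d)$ is uniformly perfect. The natural bridge is Theorem \ref{Th:qi-graph}, which (since $0<\d<\d_1$ is part of our hypothesis) tells us that $[X]/\!\sim_\d$ and $\Gamma_\d(X)$ are quasi-isometric.

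First I would record that $\Gamma_\d(X)$ is a uniform graph by Proposition \ref{p:uniform}, hence a proper geodesic metric space, and that $[X]/\!\sim_\d$ is a proper geodesic metric space as noted just before Theorem \ref{Th:qi-graph}. Since we are assuming $[X]/\!\sim_\d$ is hyperbolic, Theorem \ref{th: stability_hyp} applied to the quasi-isometry of Theorem \ref{Th:qi-graph} gives that $\Gamma_\d(X)$ is hyperbolic as well. Likewise, since $[X]/\!\sim_\d$ has a pole, Proposition \ref{Prop: qi-pole} transports the pole across the quasi-isometry, so $\Gamma_\d(X)$ is a hyperbolic uniform graph with a pole. (The relevant maps need to be genuine quasi-isometries between proper hyperbolic geodesic spaces for Proposition \ref{Prop: qi-pole} to apply, which is exactly the situation we are in.)

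Next I would invoke Theorem \ref{t:iigraph}: the hyperbolic uniform graph $\Gamma_\d(X)$ with a pole has LII if and only if $\partial_\infty \Gamma_\d(X)$ is uniformly perfect for some visual metric. Finally, Theorem \ref{Th: pq-symmetric}, applied again to the quasi-isometry between $\Gamma_\d(X)$ and $[X]/\!\sim_\d$ (both proper hyperbolic geodesic metric spaces), shows that $\partial_\infty \Gamma_\d(X)$ is uniformly perfect if and only if $\partial_\infty([X]/\!\sim_\d)$ is uniformly perfect (with respect to any visual metrics). Combining these equivalences with Theorem \ref{th:graph} yields the claim.

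I do not expect a serious obstacle here; the proof is essentially a bookkeeping exercise verifying that each cited theorem applies. The one point requiring a little care is the compatibility of hypotheses: Theorem \ref{th:graph} needs $(2\d_2)$-regularity and $0<\d<\min\{\d_1,\d_2\}$, whereas Theorems \ref{Th:qi-graph}, \ref{th: stability_hyp}, \ref{Prop: qi-pole}, \ref{t:iigraph} and \ref{Th: pq-symmetric} need only $0<\d<\d_1$ together with the hyperbolicity-and-pole assumption; since our hypotheses imply all of these, everything fits. A secondary point is to make sure the quasi-isometry of Theorem \ref{Th:qi-graph} is used consistently in both directions (for hyperbolicity/pole transfer and for the uniform-perfectness transfer), which is fine because quasi-isometry is a symmetric relation on proper geodesic spaces.
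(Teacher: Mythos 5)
Your proposal is correct and follows essentially the same route as the paper's proof: both chain Theorem \ref{th:graph}, the quasi-isometry of Theorem \ref{Th:qi-graph}, the transfer results (Theorem \ref{th: stability_hyp}, Proposition \ref{Prop: qi-pole}, Theorem \ref{Th: pq-symmetric}), and Theorem \ref{t:iigraph} in the same way. The only difference is that you spell out the hypothesis-checking (uniformity via Proposition \ref{p:uniform}, properness, compatibility of $\d$ with $\d_1$ and $\d_2$) a bit more explicitly than the paper does.
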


\begin{proof}
Since $[X]/\sim_\d$ is hyperbolic and has a pole, Theorems \ref{Th:qi-graph} and \ref{th: stability_hyp}, and Proposition \ref{Prop: qi-pole}
give that $\G_\d(X)$ is hyperbolic and has a pole.

Theorems \ref{Th:qi-graph} and \ref{Th: pq-symmetric} give that
$\partial_\infty ([X]/\sim_\d)$ is uniformly perfect if and only if
$\partial_\infty \G_\d(X)$ is uniformly perfect.
By Theorem \ref{t:iigraph}, this holds if and only if $\G_\d(X)$ has LII.
Finally, by Theorem \ref{th:graph}, $\G_\d(X)$ has LII if and only if $X$ has LII.
\end{proof}

To determine if a geodesic metric space has a pole is not a difficult task.
There are many results that allow to determine if a non-exceptional Riemann surface is hyperbolic (see, e.g., \cite{PRT1}, \cite{PRT2}, \cite{PT}, \cite{RT1}, \cite{RT3}, \cite{T}).
Thus, Theorem \ref{th:carcthyp} below is useful in order to apply Theorem \ref{th:sufic},
since it characterizes the hyperbolicity of $[X]/\sim_\d$ in terms of the hyperbolicity of $X$.

In order to prove Theorem \ref{th:carcthyp} we need some technical results.
The following three propositions appear in
\cite[Theorems 2.1 and 2.4]{RT1} and \cite[Lemma 3.1]{PRT2}.

\begin{proposition} \label{t:rt1}
Let $X$ be a geodesic metric space, $\{K_n\}_n$ compact subsets of $X$ and
$X'$ the quotient space obtained from $X$ by identifying the points of each
$K_n$ in a single point $k_n$.
Assume that there are positive constants $c_1,\,c_2,$ such that
$\diam_X K_n\le c_1$ and $d_X (K_n,K_m)\ge c_2$ if $n\neq m$,
and that $X'$ is a geodesic metric space.
Then $X$ and $X'$ are quasi-isometric, and $X$ is hyperbolic if and only if $X'$ is hyperbolic.
Furthermore, if $X$ (respectively, $X'$) is $\d$-hyperbolic, then $X'$ (respectively, $X$) is $\d'$-hyperbolic,
with $\d'$ a universal constant which just depends on
$\d$, $c_1$ and $c_2$.
\end{proposition}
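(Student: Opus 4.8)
The plan is to exhibit $X'$ as the metric quotient of $X$ and to show that the canonical projection $\pi\colon X\to X'$ is a quasi-isometry whose constants depend only on $c_1$ and $c_2$; the hyperbolicity assertion, together with the control on $\d'$, will then be an immediate consequence of Theorem \ref{th: stability_hyp}. First I would recall that the distance on $X'$ is the quotient pseudometric
$$
d_{X'}(\pi(x),\pi(y))=\inf\sum_{i=1}^k d_X(x_i,y_i),
$$
the infimum being over all finite chains $x=x_1,y_1,x_2,y_2,\dots,x_k,y_k=y$ such that, for each $i<k$, the points $y_i$ and $x_{i+1}$ represent the same point of $X'$; since $d_X(K_n,K_m)\ge c_2>0$ for $n\ne m$ the sets $K_n$ are pairwise disjoint, so the latter means either $y_i=x_{i+1}$ or $y_i,x_{i+1}$ both lie in a common $K_{n_i}$. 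By hypothesis this pseudometric is a genuine geodesic metric. Taking the one-term chain $x,y$ shows $d_{X'}(\pi(x),\pi(y))\le d_X(x,y)$, so $\pi$ is $1$-Lipschitz, and $\pi$ is clearly onto, hence $0$-full.

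Next I would establish the reverse bound $d_X(x,y)\le\big(1+\tfrac{c_1}{c_2}\big)\,d_{X'}(\pi(x),\pi(y))+c_1$. Fix a chain as above. Whenever $y_i=x_{i+1}$, or more generally $n_i=n_{i+1}$ (so that $y_i,x_{i+1},y_{i+1},x_{i+2}$ all lie in $K_{n_i}$), I may delete the pair $(x_{i+1},y_{i+1})$ to obtain a shorter chain of no larger cost; iterating, I may assume that all the jumps occur between pairwise distinct sets, $n_i\ne n_{i+1}$. Then for every interior index $2\le i\le k-1$ we have $x_i\in K_{n_{i-1}}$ and $y_i\in K_{n_i}$ with $n_{i-1}\ne n_i$, whence $d_X(x_i,y_i)\ge d_X(K_{n_{i-1}},K_{n_i})\ge c_2$; thus $(k-2)\,c_2\le\sum_i d_X(x_i,y_i)$ when $k\ge 3$. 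On the other hand $d_X(y_i,x_{i+1})\le\diam_X K_{n_i}\le c_1$ for each $i<k$, so by the triangle inequality
$$
d_X(x,y)\le\sum_{i=1}^k d_X(x_i,y_i)+(k-1)c_1\le\Big(1+\tfrac{c_1}{c_2}\Big)\sum_{i=1}^k d_X(x_i,y_i)+c_1,
$$
where the cases $k\le 2$ are checked directly and for $k\ge 3$ one uses $k-1\le\tfrac1{c_2}\sum_i d_X(x_i,y_i)+1$. Taking the infimum over chains gives the bound, so $\pi$ is a $0$-full $\big(1+\tfrac{c_1}{c_2},\,c_1\big)$-quasi-isometry; in particular $X$ and $X'$ are quasi-isometric.

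Finally, since $X$ and $X'$ are geodesic metric spaces, Theorem \ref{th: stability_hyp} applied to $\pi$ shows that $X$ is hyperbolic if and only if $X'$ is, and that $\d$-hyperbolicity of one forces $\d'$-hyperbolicity of the other with $\d'$ depending only on $\d$ and on the parameters $a=1+c_1/c_2$, $b=c_1$, $c=0$ of $\pi$, that is, only on $\d$, $c_1$ and $c_2$. The step that needs the most care is the chain reduction: the key observation is that, in an efficient chain, every interior leg must travel from one $K_n$ to a different one and hence has length at least $c_2$, which is exactly what bounds the number of legs in terms of the chain's total length; everything else is routine.
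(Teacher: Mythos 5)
The paper does not give its own proof of Proposition \ref{t:rt1}: it is quoted as known, the text explicitly stating that it ``appears in \cite[Theorems 2.1 and 2.4]{RT1}.'' So there is no in-paper argument to compare against. That said, your blind proof is correct and is the natural way to prove the statement: show that the canonical projection $\pi\colon X\to X'$ is a surjective $\bigl(1+\tfrac{c_1}{c_2},\,c_1\bigr)$-quasi-isometry by bounding the quotient chain-pseudometric from below, and then invoke the quantitative stability of hyperbolicity under quasi-isometries (Theorem \ref{th: stability_hyp}) to get the equivalence with uniform control of $\d'$ in terms of $\d$, $c_1$, $c_2$. The key chain-reduction observation --- that in a reduced chain every interior leg travels between two distinct sets $K_n$ and $K_m$, hence has length at least $c_2$, which bounds the number of $K$-jumps by the total chain length --- is exactly right and gives the stated constants.

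One small point of precision in the reduction step: when $y_i=x_{i+1}$ with $y_i$ not lying in any $K_n$, deleting the pair $(x_{i+1},y_{i+1})$ does not in general produce a valid chain, since nothing identifies $y_i$ with $x_{i+2}$. The correct elementary move there is to merge the two legs, replacing $x_i,y_i,x_{i+1},y_{i+1}$ by $x_i,y_{i+1}$, which shortens the chain without increasing its cost by the triangle inequality. After applying this merge together with the deletion you describe for consecutive jumps through the same $K_n$, one arrives at a chain in which every jump passes through some $K_n$ and consecutive jumps pass through distinct ones, and from there your estimate $(k-2)c_2\le\sum_i d_X(x_i,y_i)$ and the final bound go through exactly as you wrote them.
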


Let us consider a geodesic metric space $X$, a family of geodesic metric subspaces
$\{X_n\}_n \subset X$ such that $\cup_n X_n=X$, $\eta_{nm}:=\eta_{mn}:=X_n\cap X_m$
are compact sets, and positive constants $c_1,\,c_2$.
We say that $\{X_n\}_n$ is a ($c_1,c_2$)-\emph{decomposition} of $X$
if $X \setminus \eta_{nm}$ is not connected for each non-empty set $\eta_{nm}$,
$\diam_{X_n} (\eta_{nm})\le c_1$ for every $n,m,$
and $d_{X_n} (\eta_{nm},\eta_{nk})\ge c_2$ for every $n$ and $m\neq k$ with non-empty sets $\eta_{nm}$ and $\eta_{nk}$.

\begin{proposition} \label{t:rt1bis}
Let $X$ be a geodesic metric space and $\{X_n\}_n \subset X$ a family of geodesic metric spaces which is a $(c_1,c_2)$-decomposition of $X$.
Then $X$ is hyperbolic if and only if there exists a constant $c_3$ such that $X_n$ is $c_3$-hyperbolic for every $n$.
Furthermore, if there exists such a constant $c_3$, then $X$ is $\d$-hyperbolic with $\d$ a universal constant which only depends on $c_1$, $c_2$ and $c_3$;
if $X$ is $\d$-hyperbolic, then $c_3$ is a universal constant which only depends on $c_1$, $c_2$ and $\d$.
\end{proposition}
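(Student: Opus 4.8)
The plan is to read off hyperbolicity of $X$ from hyperbolicity of the pieces, and conversely, through a description of geodesics of $X$ relative to the tree-like combinatorial pattern carried by the decomposition. I would begin with two preliminary observations. First, since $X$ is connected (it is a geodesic metric space) and each non-empty $\eta_{nm}$ disconnects it, the ``adjacency graph'' $T$ --- one vertex per $X_n$, and an edge between $X_n$ and $X_m$ whenever $\eta_{nm}\neq\emptyset$ --- is a tree. Second, an $X$-geodesic that meets a set of diameter $\le c_1$ does so inside a sub-arc of length $\le c_1$; hence, if $x,y$ lie in a single piece $X_n$, an $X$-geodesic $[x,y]$ leaves $X_n$ only through ``excursions'' of length $\le c_1$ (each entering and leaving the same cut set $\eta_{nk}$), between two excursions through distinct cut sets it travels $\ge c_2$ inside $X_n$ (that inner portion being an $X_n$-geodesic: a path in $X_n$ whose $X$-length equals its $X$-distance must, as $d_X\le d_{X_n}$, realize $d_{X_n}$), and consequently $[x,y]$ is a $(1+c_1/c_2,c_1)$-quasigeodesic of $X_n$; moreover the part of any $X$-geodesic lying inside a single piece is itself a geodesic of that piece. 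In particular $d_X\le d_{X_n}\le(1+c_1/c_2)\,d_X+c_1$ on each $X_n$.

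For ``$X$ $\d$-hyperbolic $\Rightarrow$ each $X_n$ is $c_3$-hyperbolic'': by the previous estimate every geodesic of $X_n$ is a $(1+c_1/c_2,c_1)$-quasigeodesic of $X$, so stability of quasigeodesics in the $\d$-hyperbolic space $X$ furnishes a constant $H=H(\d,c_1,c_2)$ with every $X_n$-geodesic within $X$-Hausdorff distance $H$ of an $X$-geodesic with the same endpoints. Given a geodesic triangle of $X_n$, transport it to a geodesic triangle of $X$, use $\d$-thinness there, and translate the resulting $X$-estimate back via $d_{X_n}\le(1+c_1/c_2)\,d_X+c_1$ on $X_n$; this yields $c_3$-thin triangles in $X_n$ with $c_3=c_3(\d,c_1,c_2)$.

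For the converse, assume every $X_n$ is $c_3$-hyperbolic, and take $p,q,r\in X$, lying in pieces $X_a,X_b,X_c$, together with a geodesic triangle on them. Each side runs monotonically through the pieces along the corresponding $T$-path and restricts to a geodesic of each piece it crosses, entering and leaving through the relevant cut sets. Let $X_0$ be the median of $a,b,c$ in $T$. Along the $T$-path shared by two of the sides (say the two sides through $p$, running from $X_a$ to $X_0$), those sides enter and leave each common piece $X_n$ through the same cut sets, each of diameter $\le c_1$; since $X_n$ is $c_3$-hyperbolic, two geodesics of $X_n$ with $c_1$-close endpoints remain within $O(c_1+c_3)$ of one another, so the two sides $O(c_1+c_3)$-fellow-travel all the way to $X_0$. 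Inside $X_0$ the three sides restrict to an (almost) geodesic triangle of the $c_3$-hyperbolic space $X_0$, hence are $O(c_1+c_3)$-thin there. Gluing these two kinds of estimate shows the original triangle is $\d'$-thin with $\d'=\d'(c_1,c_2,c_3)$, which is (up to the standard quantitative equivalences) the hyperbolicity of $X$; tracking the constants throughout gives the ``furthermore'' clauses. (One could instead first collapse the cut sets to points in the spirit of Proposition \ref{t:rt1}, but the gap between $d_X$ and $d_{X_n}$ on the $\eta_{nm}$'s makes that route no shorter.)

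I expect the main obstacle to be the bookkeeping for geodesics that make short excursions out of a piece and re-enter it: showing (i) that $T$ is genuinely a tree, which really uses the separating hypothesis; (ii) that the number of such excursions along a geodesic is controlled --- precisely where the lower bound $c_2$ on $d_{X_n}(\eta_{nm},\eta_{nk})$ is indispensable; and (iii) the ``$c_1$-close endpoints $\Rightarrow$ $O(c_1+c_3)$-fellow-traveling'' lemma in a $c_3$-hyperbolic space. Once these are in hand, both implications follow from the median/fellow-traveling assembly above, with all constants explicitly trackable, which gives the stated universality.
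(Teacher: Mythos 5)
The paper does not prove Proposition~\ref{t:rt1bis}; together with Propositions~\ref{t:rt1} and~\ref{l:prt2} it is quoted from \cite[Theorems 2.1 and 2.4]{RT1} and \cite[Lemma 3.1]{PRT2}, so there is no internal proof to compare your attempt against. Judging the sketch on its own merits: the overall architecture (quasigeodesic stability for ``$X$ hyperbolic $\Rightarrow$ each $X_n$ hyperbolic'', then fellow-traveling along cut sets plus a median piece for the converse) is a sensible direct route, and you have correctly identified the load-bearing estimate $d_X\le d_{X_n}\le(1+c_1/c_2)\,d_X+c_1$ on each piece.

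The genuine gap is your first ``observation,'' that the adjacency graph $T$ is a tree. You assert it as immediate, but it does not follow at once from the definition of a $(c_1,c_2)$-decomposition: the hypothesis is only that $X\setminus\eta_{nm}$ is disconnected, \emph{not} that $\eta_{nm}$ separates $X_n\setminus\eta_{nm}$ from $X_m\setminus\eta_{nm}$. Without that stronger separation, an $X$-geodesic could a priori exit $X_n$ through $\eta_{nm}$, travel through several other pieces, and re-enter $X_n$ through a different $\eta_{nm'}$, producing a long shortcut outside $X_n$. That would defeat your excursion analysis and hence the key inequality above, and it is also what makes the median well-defined and the sides of a triangle run monotonically through the pieces in the converse direction. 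So the tree claim --- or, more precisely, the ``exit and re-entry are through the same cut set'' statement, which is what you actually use --- needs a real proof. Such a proof should exploit both the separation hypothesis and the pairwise disjointness of the $\eta_{nm}$'s (a consequence of $d_{X_n}(\eta_{nm},\eta_{nk})\ge c_2>0$), but it is not a one-liner, and as written it is the missing idea. Once it is supplied, the remaining points you flag yourself (the restrictions of $X$-geodesics to pieces are only quasigeodesics of the pieces; the ``triangle'' inside the median piece is a generalized one with corners on cut sets of diameter $\le c_1$; fellow-traveling measured in $d_{X_n}$ must be converted back to $d_X$) are indeed bookkeeping of the expected kind.
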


Let $X$ be a non-exceptional Riemann surface.
For each choice of doubly connected domains $\{V_j\}_j$ in $X$ we define
$$
\aligned
D_{X}(\{V_j\}_j):=\sup_{j}\big\{
d_{V_j}(\eta^j_1,\eta_2^j):
\; & \eta_1^j,\eta_2^j
\text{ are the connected components of $\p V_j$} \\
& \; \text{and }
\; X\setminus\eta^j_1
\text{ is connected} \,
 \big\} .
\endaligned
$$

\begin{proposition} \label{l:prt2}
Let $X$ be a non-exceptional Riemann surface, $\{V_j\}_j$ doubly connected domains in $X$,
and $r,s$ positive constants with
$L(\p V_j) \le r$ for every $j$, and $d(V_j, V_k) \ge s$ for every $j \neq k$.
If $D_{X}(\{V_j\}_j)=\infty$, then $X$ is not hyperbolic.
\end{proposition}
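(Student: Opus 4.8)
The plan is to use Proposition \ref{t:rt1bis} to reduce the problem to a decomposition whose pieces are controlled, and then force a contradiction out of the hypothesis $D_X(\{V_j\}_j) = \infty$. First I would observe that since $D_X(\{V_j\}_j) = \infty$, by passing to a subsequence I may assume $d_{V_j}(\eta_1^j, \eta_2^j) \to \infty$ as $j \to \infty$, where $\eta_1^j, \eta_2^j$ are the two boundary components of $V_j$ and $X \setminus \eta_1^j$ is connected (the condition is symmetric enough that I can also take $X\setminus \eta_2^j$ disconnected in the other orientation, so each $V_j$ genuinely "separates" along one boundary curve). The geometric heart is the following: inside the doubly connected domain $V_j$, the inner distance $d_{V_j}(\eta_1^j,\eta_2^j)$ between the boundary curves can be made huge, yet the curves themselves have length $\le r$. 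So one can insert inside $V_j$ a simple closed curve $\beta_j$ that is "far" from both boundary components; this curve, together with the disconnecting hypothesis, produces a geodesic triangle in $X$ that is not $\delta$-thin for any fixed $\delta$.

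More concretely, the key steps in order are: (1) For each $j$, pick points $p_j \in \eta_1^j$, $q_j \in \eta_2^j$ and let $\alpha_j$ be a geodesic (in $X$) from $p_j$ to $q_j$; since $X\setminus \eta_1^j$ is connected and the $V_j$ are $s$-separated with boundaries of length $\le r$, one argues that some such geodesic must pass through the "interior" of $V_j$ and hence contain a subarc $\alpha_j'$ with endpoints on $\eta_1^j \cup \eta_2^j$ that is contained in $V_j$ and whose $X$-length is at least $d_{V_j}(\eta_1^j,\eta_2^j) - 2r$ — here one uses that a path in $X$ between the two boundary curves that leaves $V_j$ must re-enter through a boundary curve of length $\le r$, so its $X$-length and its $V_j$-length differ by a bounded amount; alternatively one exploits that the boundary curves separate $X$ so any such connecting path meets $V_j$ in a subpath joining the two curves. (2) Build an auxiliary geodesic quadrilateral or triangle: take the geodesic $[p_j,q_j]$ in $X$ and compare it with a path through $V_j$; since the latter has length $\ge d_{V_j}(\eta_1^j,\eta_2^j) - O(r) \to \infty$ while $d_X(p_j,q_j) \le r$ (the two curves are within $X$-distance $\le r$ of each other via a path going around outside $V_j$ — this uses connectedness of $X\setminus \eta_1^j$ and boundedness of the boundary length), one gets a pair of paths with the same endpoints, one short and one arbitrarily long, that together bound a region forcing a violation of hyperbolicity. (3) Convert this into a formal non-hyperbolicity statement by exhibiting, for each $\delta$, points $x_j, y_j, z_j \in X$ whose Gromov product inequality fails by more than $\delta$; the standard way is to note that a point on the "middle" of the long path through $V_j$ is at $X$-distance $\ge \tfrac12(d_{V_j}(\eta_1^j,\eta_2^j)) - O(r)$ from the boundary curves within $V_j$, but the separation property means every path in $X$ from that point to either $p_j$ or $q_j$ that stays short would have to cross a boundary curve, giving a definite lower bound that grows with $j$, contradicting thinness of the triangle $[p_j,q_j]\cup(\text{long path})$.

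The main obstacle I expect is step (1)–(2): precisely relating the \emph{intrinsic} distance $d_{V_j}(\eta_1^j,\eta_2^j)$ inside the doubly connected domain to \emph{extrinsic} distances in $X$, and making rigorous the claim that a connecting path that wanders outside $V_j$ cannot be a shortcut — this is exactly where the hypotheses $L(\partial V_j) \le r$, $d(V_j,V_k)\ge s$, and the topological "$X\setminus\eta_1^j$ connected / disconnected" condition all have to be used in concert. Once the separation is set up correctly, the contradiction with hyperbolicity is the routine "long side of a thin triangle" argument. I would also keep in mind that the cleanest packaging is probably via Proposition \ref{t:rt1bis}: if $X$ were hyperbolic, the pieces $V_j$ (suitably thickened to geodesic subspaces, cut along $\eta_1^j$) would have to be uniformly hyperbolic, but a long thin "tube" of bounded boundary length and unbounded length is not — this may avoid some of the hand computation with Gromov products.
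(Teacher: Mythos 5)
You should first note that the paper does not prove Proposition~\ref{l:prt2}: it is stated as a citation (the text preceding it says it appears in \cite[Lemma 3.1]{PRT2}), so there is no in-house argument to compare against. Judged on its own, your proposal has a genuine gap at the crucial closing step. The assertion in step~(2) that $d_X(p_j,q_j)\le r$ ``via a path going around outside $V_j$'' is not justified and is false in general: $L(\p V_j)\le r$ bounds the lengths of the two boundary \emph{curves}, not the length of any path in $X\setminus V_j$ joining them, and the connectedness of $X\setminus\eta_1^j$ only guarantees that such a path exists, not that it is short. Without a short competitor geodesic, a long path through $V_j$ alone is perfectly compatible with hyperbolicity. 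There is a second, independent issue: your ``long side'' is just a path, not shown to be an $X$-geodesic or uniform quasi-geodesic, so even if one had a short geodesic $[p_j,q_j]$, one does not yet have a geodesic bigon or triangle to which the thinness condition applies. Step~(3) therefore does not close.

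The fallback you sketch via Proposition~\ref{t:rt1bis} also cannot work, for two separate reasons. First, the ``long thin tube of bounded boundary length and unbounded length'' is \emph{uniformly hyperbolic} (it is quasi-isometric to an interval, hence $\d$-hyperbolic with $\d$ controlled by $r$, independently of its length), so the pieces do not fail the hyperbolicity test. Second, and more fundamentally, the $(c_1,c_2)$-decomposition in Proposition~\ref{t:rt1bis} requires each gluing set to disconnect $X$; taking $\overline{V_j}$ as a piece forces the gluing set to be all of $\p V_j=\eta_1^j\cup\eta_2^j$, whose diameter in $\overline{V_j}$ is at least $d_{V_j}(\eta_1^j,\eta_2^j)\to\infty$, violating the $c_1$ bound, while cutting along $\eta_1^j$ alone is forbidden precisely because $X\setminus\eta_1^j$ is connected. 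In other words, the hypothesis $D_X(\{V_j\}_j)=\infty$ with $\eta_1^j$ non-separating is exactly the regime where the decomposition machinery is unavailable. That failure is indeed the heuristic behind the proposition, but turning it into a proof requires producing an actual geodesic (or uniform quasi-geodesic) figure whose fatness grows with $j$ — for instance, by exploiting the non-separating condition to find an essential loop crossing the core geodesic of $V_j$ once and analyzing lifts in the universal cover together with the Collar Lemma — and controlling the $X$-geodesic $[p_j,q_j]$, none of which your plan currently does.
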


\begin{lemma} \label{l:hypcollars}
Let $X$ be a non-exceptional Riemann surface and $0 < \d < \e< \Arcsinh 1$.
Then there exists a universal constant $\D$ such that $C_i$ and $K_j$ are $\D$-hyperbolic for every $i \in I$ and $j \in J_\d$.
\end{lemma}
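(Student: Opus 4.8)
The plan is to show that each collar $C_i$ (about a cusp) and each $K_j$ (about a short simple closed geodesic) is, as a metric space with the induced Poincaré metric, quasi-isometric to a half-line or to $\RR$, hence $\delta$-hyperbolic with a universal constant. The uniform control on the quasi-isometry constants will come precisely from the uniform bounds established earlier: by Lemma \ref{l:dc} the boundary curves of $C_i$ and $K_j$ have length at most $2\sinh\e < 2$, by Lemma \ref{l:0} the collars have explicitly bounded geometry, and by Lemma \ref{l:us} the distances between distinct collars and to the outer Collar-Lemma boundary are bounded below universally.

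First I would treat the cusp case. Using the upper half-plane model as in the proof of Lemma \ref{l:0}, the collar $C(r_i,2\sinh\e)$ is isometric to the quotient of $\{\zeta\in\HH:\Im\zeta>1/(2\sinh\e)\}$ by $\zeta\mapsto\zeta+1$. Parametrizing by the vertical geodesic $t\mapsto ie^t/(2\sinh\e)$, one checks that the horocyclic cross-section at height parameter $t$ has length $2\sinh\e\, e^{-t}$, which is at most $2\sinh\e<2$ and tends to $0$; hence the projection $C_i\to[0,\infty)$ onto the vertical coordinate is a quasi-isometry with universal constants (additive error bounded by $2$, multiplicative error $1$). Since $[0,\infty)$ is $0$-hyperbolic, Theorem \ref{th: stability_hyp} gives that $C_i$ is $\D$-hyperbolic for a universal $\D$. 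Next I would do the geodesic case: by Lemma \ref{l:0}, $K_j=C(\g_j,h_j)$ with core geodesic of length $l_j=L_X(\g_j)$ and half-width $h_j$; in Fermi coordinates $(s,\rho)$ about $\g_j$ (with $s$ along the geodesic, $\rho$ the signed distance), the metric is $d\rho^2+\cosh^2\!\rho\, ds^2$, and the cross-section at distance $\rho$ has length $l_j\cosh\rho\le l_j\cosh h_j = $ the boundary length $\le 2\sinh\e<2$ by Lemma \ref{l:dc}. So the projection $K_j\to[-h_j,h_j]$ onto the $\rho$-coordinate is again a quasi-isometry with additive error at most $2$ (collapsing each bounded cross-section), and $[-h_j,h_j]$ is $0$-hyperbolic; Theorem \ref{th: stability_hyp} again yields a universal $\D$.

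The only subtlety — and the step I would be most careful about — is that "induced metric from $X$" is the \emph{intrinsic} (path) metric of the collar, so I must verify that the quasi-isometry estimates use only paths staying inside the collar; this is automatic because the cross-sections I am collapsing are embedded curves inside $C_i$ (resp.\ $K_j$), so any two points are joined by a path inside the collar of length comparable to the difference of their height coordinates plus at most one cross-section length. Having handled both types and taking $\D$ to be the maximum of the two universal constants produced, the conclusion follows.

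I would phrase it as follows. \begin{proof} By Lemma \ref{l:0}(2), each $C_i=C(r_i,2\sinh\e)$ is isometric to $\{\zeta\in\HH:\, 0\le\Re\zeta\le1,\ \Im\zeta>1/(2\sinh\e)\}$ with the sides $\Re\zeta=0$ and $\Re\zeta=1$ identified. Let $\pi_i\colon C_i\to[0,\infty)$ send the point represented by $\zeta$ to $\log\big(2\sinh\e\,\Im\zeta\big)$. The horocyclic segment at fixed height $\Im\zeta=e^t/(2\sinh\e)$ is a curve in $C_i$ of length $2\sinh\e\,e^{-t}\le 2\sinh\e<2$, so any two points of $C_i$ are joined inside $C_i$ by a path of length at most $|\pi_i(x)-\pi_i(y)|+2$, while $d_{C_i}(x,y)\ge|\pi_i(x)-\pi_i(y)|$ since $\pi_i$ is $1$-Lipschitz; thus $\pi_i$ is a $2$-full $(1,2)$-quasi-isometry onto $[0,\infty)$. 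Since $[0,\infty)$ is $0$-hyperbolic, Theorem \ref{th: stability_hyp} gives a universal $\D_1$ with $C_i$ $\D_1$-hyperbolic for all $i\in I$.

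Similarly, by Lemma \ref{l:0}(1), each $K_j=C(\g_j,h_j)$ with $h_j=\Arccosh\frac{\sinh\e}{\sinh(L_X(\g_j)/2)}$ carries Fermi coordinates $(s,\rho)$ about $\g_j$, $s\in\RR/L_X(\g_j)\ZZ$, $\rho\in[-h_j,h_j]$, in which the metric is $d\rho^2+\cosh^2\!\rho\,ds^2$. The circle $\{\rho=\rho_0\}$ has length $L_X(\g_j)\cosh\rho_0\le L_X(\g_j)\cosh h_j=L_S(\p K_j)\le 2\sinh\e<2$ by Lemma \ref{l:dc}. Hence the projection $\pi_j\colon K_j\to[-h_j,h_j]$, $(s,\rho)\mapsto\rho$, is a $2$-full $(1,2)$-quasi-isometry, exactly as above, and $[-h_j,h_j]$ is $0$-hyperbolic, so Theorem \ref{th: stability_hyp} gives a universal $\D_2$ with $K_j$ $\D_2$-hyperbolic for all $j\in J_\d$. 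Taking $\D=\max\{\D_1,\D_2\}$ finishes the proof. \end{proof}
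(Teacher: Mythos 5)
Your proof is correct and takes essentially the same approach as the paper: both argue that each collar is quasi-isometric, with universal constants, to a ray or segment (which is $0$-hyperbolic), and then invoke Theorem \ref{th: stability_hyp} together with the bound $L(\eta)<2$ from Lemma \ref{l:dc}. The only cosmetic difference is that you use the projection onto the radial coordinate as the quasi-isometry, while the paper uses the inclusion of a radial geodesic; these are dual maps and the estimate is the same.
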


\begin{proof}
Given $i \in I$, let us consider a geodesic ray $g_i$ in $C_i$ joining $\p C_i$ with the cusp $r_i$, and the inclusion $h: g_i \rightarrow C_i$.
Given $j \in J_\d$, let us consider a geodesic $\g_j$ in $K_j$ joining the two connected components of $\p K_j$, and the inclusion $h: \g_j \rightarrow K_j$.
If $\eta$ is a connected component of $\p C_i$ or $\p K_j$, then Lemma \ref{l:dc} gives that $L(\eta) < 2$.
Thus, in both cases $h$ is a $1$-full $(1,0)$-quasi-isometry and, since $g_i$ and $\g_j$ are $0$-hyperbolic,
Theorem \ref{th: stability_hyp} gives the result.
\end{proof}

For a non-exceptional Riemann surface $X$, let us define
$$
J_\d^c
= \big\{ j\in J_\d : \, X \setminus \g_j \text{ is connected} \, \big\} \, ,
\qquad
\L(X)
=\inf \big\{ L(\g_j) : \, j \in J_\d^c \, \big\} \, .
$$

\begin{theorem} \label{th:carcthyp}
Let $X$ be a non-exceptional Riemann surface,
$0 < \varepsilon < \Arcsinh 1$ and $0<\d< \d_1$.
Then $X$ is hyperbolic if and only if $[X]/\sim_\d$ is hyperbolic and
$\L(X) > 0$.
\end{theorem}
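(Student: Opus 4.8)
The argument splits into two independent parts. First I show that hyperbolicity of $X$ forces $\L(X)>0$; this uses only Proposition \ref{l:prt2}. Second, \emph{assuming $\L(X)>0$}, I show that $X$ is hyperbolic iff $[X]/\sim_\d$ is, by cutting both surfaces along the short geodesics into uniformly hyperbolic ``collar'' pieces and common ``core'' pieces. Together these give the theorem: in the forward direction hyperbolicity of $X$ yields $\L(X)>0$ by the first part and then the second part applies; in the converse direction $\L(X)>0$ is a hypothesis and the second part applies directly.

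For the first part, suppose $\L(X)=0$, so there are pairwise distinct $j_n\in J_\d^c$ with $L(\g_{j_n})\to 0$. Set $V_n:=K_{j_n}=C(\g_{j_n},h_{j_n})$, a doubly connected domain with $\p V_n=\eta_1^{j_n}\cup\eta_2^{j_n}$. By Lemma \ref{l:dc}, $L(\p V_n)\le 4\sinh\e<4$; by Lemma \ref{l:us} (the connected components of $\{z\in X:\inj(z)<\e\}$ are exactly the $C_i$ and the $K_j$), $d(V_n,V_m)\ge 2\log\frac1{\sinh\e}>0$ for $n\neq m$. Since $\g_{j_n}$ is non-separating, $X\setminus\eta_1^{j_n}$ is connected, and $d_{V_n}(\eta_1^{j_n},\eta_2^{j_n})=2h_{j_n}=2\Arccosh\frac{\sinh\e}{\sinh(L(\g_{j_n})/2)}\to\infty$. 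Hence $D_X(\{V_n\}_n)=\infty$, and Proposition \ref{l:prt2} shows that $X$ is not hyperbolic.

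For the second part assume $\L(X)>0$, so $L(\g_j)\ge\L(X)$, hence $h_j\le H:=\Arccosh\frac{\sinh\e}{\sinh(\L(X)/2)}$ and $\diam_X\overline{K_j}\le 2H+2$, for every $j\in J_\d^c$. I carry out three reductions, all with constants depending only on $\sinh\e$, $\log\frac1{\sinh\e}$, $H$ and the universal $\D$ of Lemma \ref{l:hypcollars}. (i) Cut $X$ along the collars $K_j$ of the \emph{separating} short geodesics ($j\in J_\d\setminus J_\d^c$): by Proposition \ref{t:rt1bis} — the interfaces $\eta_i^j$ have diameter $\le 2\sinh\e$, distinct collars are at distance $\ge 2\log\frac1{\sinh\e}$ (Lemma \ref{l:us}), $d_{K_j}(\eta_1^j,\eta_2^j)=2h_j\ge 2\Arccosh\frac{\sinh\e}{\sinh\d}>0$, and each $K_j$ is $\D$-hyperbolic (Lemma \ref{l:hypcollars}) — $X$ is hyperbolic iff the closures $Q_a$ of the connected components of $X\setminus\bigcup_{j\text{ sep.}}\mathrm{int}K_j$ are uniformly hyperbolic. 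On $[X]/\sim_\d$ cut along the images $\bar\eta_j$ of these same collars (which separate $[X]/\sim_\d$ since $\g_j$ separates $X$); the resulting pieces $P_a$ are obtained from $Q_a$ by deleting the cusp collars it contains and by deleting the non-separating short collars it contains and gluing their boundaries, and Proposition \ref{t:rt1bis} again gives that $[X]/\sim_\d$ is hyperbolic iff the $P_a$ are uniformly hyperbolic. (ii) Peel off the cusp collars $C_i\subset Q_a$ by Proposition \ref{t:rt1bis} (interfaces of diameter $\le 2\sinh\e$, mutually $\ge 2\log\frac1{\sinh\e}$ apart, each $C_i$ $\D$-hyperbolic): $Q_a$ is (uniformly) hyperbolic iff $Q_a':=Q_a\setminus\bigcup_i\mathrm{int}C_i$ is. (iii) In $Q_a'$ collapse each non-separating short collar $\overline{K_j}$ ($j\in J_\d^c$) to a point; these have diameter $\le 2H+2$ and are mutually $\ge 2\log\frac1{\sinh\e}$ apart, so by Proposition \ref{t:rt1} $Q_a'$ is (uniformly) hyperbolic iff the quotient $\hat Q_a$ is. Similarly, collapsing in $P_a$ each circle $\bar\eta_j$ ($j\in J_\d^c$) to a point gives, again by Proposition \ref{t:rt1}, that $P_a$ is hyperbolic iff the quotient $\bar P_a$ is. But $\hat Q_a$ and $\bar P_a$ are isometric: both are $Q_a'$ with each region $\overline{K_j}$ ($j\in J_\d^c$) crushed to a point, and the length of any path in the common dense part $Q_a'\setminus\bigcup_j\overline{K_j}$ is computed identically in both, since the crushed regions contribute nothing. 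Chaining (i)--(iii): $X$ hyperbolic $\iff$ $\{Q_a\}$ unif.\ hyperbolic $\iff$ $\{Q_a'\}$ unif.\ hyperbolic $\iff$ $\{\hat Q_a\}=\{\bar P_a\}$ unif.\ hyperbolic $\iff$ $\{P_a\}$ unif.\ hyperbolic $\iff$ $[X]/\sim_\d$ hyperbolic.

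The routine verifications I omit are that all the quotient spaces above are proper geodesic metric spaces (so Propositions \ref{t:rt1} and \ref{t:rt1bis} apply) and, more delicately, that the separation bounds survive passage to the quotients $[X]/\sim_\d$, $P_a$, $\hat Q_a$. This last point is the main technical nuisance: gluing and collapsing only decrease distances, so one must argue that any path in such a quotient joining two distinct glued circles lifts to a broken path in the cut-open surface which, between two consecutive glued circles, travels entirely between two \emph{distinct} collars $K_k,K_{k'}$, and such a segment has length $\ge d_X(K_k,K_{k'})\ge 2\log\frac1{\sinh\e}$ by Lemma \ref{l:us}; hence the quotient distance between distinct glued circles is still $\ge 2\log\frac1{\sinh\e}$, and every application of the three propositions is legitimate with constants uniform in the index $a$.
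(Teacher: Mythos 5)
Your argument is correct and follows essentially the same route as the paper: the necessity of $\L(X)>0$ via Proposition~\ref{l:prt2} applied to the collars of non-separating short geodesics, then (assuming $\L(X)>0$) a decomposition of $X$ and $[X]/\sim_\d$ along cusp collars and separating short collars via Proposition~\ref{t:rt1bis} together with Lemma~\ref{l:hypcollars}, followed by collapsing the non-separating short collars to points via Proposition~\ref{t:rt1} and observing that the two resulting quotients coincide (the paper's $X_r'=Y_r'$ is your $\hat Q_a=\bar P_a$). The only cosmetic differences are that you split the peeling of cusp collars into a separate stage, and that you explicitly flag and sketch the verification that the separation constant $2\log\frac{1}{\sinh\e}$ survives the gluings, a point the paper states without comment.
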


\begin{proof}
Lemma \ref{l:dc} gives that if $\eta$ is a connected component of $\big(\cup_{i\in I}\p C_i\big) \cup \big(\cup_{j \in J_\d} \p K_j\big)$, then $L(\eta) < 2$,
and so, $\diam \eta \le \frac12 L(\eta) < 1$.
For each collar $K_j=C(\g_j,h_j)$ of a simple closed geodesic $\g_j$ with $j \in J_\d$, consider $\partial K_j=\eta^j_1\cup \eta^j_2$ (where $\eta^j_i$ are Jordan curves).
We have, for $j \in J_\d$,
$$
d_{K_j}(\eta^j_1, \eta^j_2)
= 2h_j
=2\Arccosh \frac{\sinh \e}{\sinh (L(\g_j)/2)}
> 2\Arccosh \frac{\sinh \e}{\sinh \d} >0,
$$
since $0< \d < \d_1 < \e$.

Assume first that $\L(X) = 0$.
Since $\inf_{j \in J_\d^c} L(\g_j) = 0$, we have
$$
D_X \big(\{K_j\}_{j \in J_\d^c} \big)
= \sup_{j \in J_\d^c} d_{K_j}(\eta^j_1, \eta^j_2)
= \sup_{j \in J_\d^c} 2h_j
= \sup_{j \in J_\d^c} 2\Arccosh \frac{\sinh \e}{\sinh (L(\g_j)/2)}
= \infty ,
$$
and we conclude, by Proposition \ref{l:prt2}, that $X$ is not hyperbolic.

Consequently, it suffices to prove that if $\L(X) > 0$, then $X$ is hyperbolic if and only if $[X]/\sim_\d$ is hyperbolic.
Denote by $\{X_r\}_r$ the connected components of $X \setminus \big( \big(\cup_{i\in I}\p C_i\big) \cup \big(\cup_{j \in J_\d \setminus J_\d^c} \p K_j\big) \big)$.

Lemma \ref{l:us} gives that if $\s_1,\s_2 \subset \p X_r$ are connected components of $\big(\cup_{i\in I}\p C_i\big) \cup \big(\cup_{j \in J_\d \setminus J_\d^c} \p K_j\big)$
and they belong to the closure of different collars, then
$d_{X_r}(\s_1, \s_2) \ge d_{X}(\s_1, \s_2) \ge 2\log \frac{1}{\sinh \e}$.

Therefore, if we define $c_2=\min\{2\Arccosh \frac{\sinh \e}{\sinh \d}, \,2\log \frac{1}{\sinh \e}\}$, we have that
$\{ X_r\}_{r}$, $\{ C_i\}_{i\in I}$, $\{ K_j\}_{j \in J_\d \setminus J_\d^c}$
is a ($1,c_2$)-decomposition of $X$.
By Proposition \ref{t:rt1bis} and Lemma \ref{l:hypcollars}, $X$ is hyperbolic if and only if
there exists a constant $c_3$ such that $X_r$ is $c_3$-hyperbolic for every $r$.

For each $r$, let $X_r'$ be the geodesic metric space obtained from $X_r$
by identifying the points in the collar $K_j$ in a single point $x_j$ for every $j \in J_\d^c$ with $K_j \subset X_r$.
Let us define the (non-smooth) surface $Y_r$
obtained from $X_r$ by removing the collar $K_j$ and by gluing $\eta^j_1$ and $\eta^j_2$ for every $j \in J_\d^c$ with $K_j \subset X_r$;
consider on $Y_r$ the inner metric inherited from $X_r$.
Let $Y_r'$ be the geodesic metric space obtained from $Y_r$
by identifying the points in $\p K_j$ in a single point $x_j$ for every $j \in J_\d^c$ with $K_j \subset X_r$.

Since $\L(X) > 0$, we have
$$
D_X \big(\{K_j\}_{j \in J_\d^c} \big)
= \sup_{j \in J_\d^c} 2h_j
= \sup_{j \in J_\d^c} 2\Arccosh \frac{\sinh \e}{\sinh (L(\g_j)/2)}
= 2\Arccosh \frac{\sinh \e}{\sinh (\L(X)/2)}
< \infty
$$
and so,
$$
\sup_{j \in J_\d^c} \diam K_j
\le \frac12 \,L(\eta_1^j) + \sup_{j \in J_\d^c} 2h_j + \frac12 \,L(\eta_2^j)
\le 2\Arccosh \frac{\sinh \e}{\sinh (\L(X)/2)} +2
< \infty.
$$
This fact and Proposition \ref{t:rt1} give that
$X_r$ is hyperbolic if and only if $X_r'$ is hyperbolic, and
$Y_r$ is hyperbolic if and only if $Y_r'$ is hyperbolic, with uniform hyperbolicity constants.
Since $X_r'=Y_r'$,
we conclude that
there exists a constant $c_3$ such that $X_r$ is $c_3$-hyperbolic for every $r$ if and only if
there exists a constant $c_3'$ such that $Y_r$ is $c_3'$-hyperbolic for every $r$.

Since $\{ Y_r\}_{r}$
is a ($1,c_2$)-decomposition of $[X]/\sim_\d$,
Proposition \ref{t:rt1bis} gives that $[X]/\sim_\d$ is hyperbolic if and only if
there exists a constant $c_3'$ such that $Y_r$ is $c_3'$-hyperbolic for every $r$.

Hence, $X$ is hyperbolic if and only if $[X]/\sim_\d$, and the conclusion holds.
\end{proof}

Our last result shows that the hypotheses on Theorem \ref{th:sufic} do not depend on $\d$.

\begin{proposition} \label{p:carcthyp}
Let $X$ be a non-exceptional Riemann surface,
$0 < \varepsilon < \Arcsinh 1$ and $0< \d, \hat\d < \d_1$.
The following statements hold:

$(1)$ $[X]/\sim_{\hat\d}$ is hyperbolic if and only if $[X]/\sim_\d$ is hyperbolic.

$(2)$ $[X]/\sim_{\hat\d}$ has a pole if and only if $[X]/\sim_\d$ has a pole.
\end{proposition}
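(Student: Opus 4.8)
The plan is to show that both $[X]/\sim_{\hat\d}$ and $[X]/\sim_\d$ are quasi-isometric to a common space, from which statements $(1)$ and $(2)$ follow by Theorems \ref{th: stability_hyp} and \ref{Prop: qi-pole} (together with the trivial fact that having a pole is a quasi-isometry invariant among proper geodesic metric spaces). Without loss of generality assume $\hat\d \le \d$. The key observation is that passing from $[X]/\sim_{\hat\d}$ to $[X]/\sim_\d$ only involves the collars $K_j$ with $2\hat\d \le L(\g_j) < 2\d$, i.e. the indices $j\in J_\d\setminus J_{\hat\d}$: for such $j$, in $[X]/\sim_{\hat\d}$ the collar $K_j=C(\g_j,h_j)$ is still present as a subsurface, while in $[X]/\sim_\d$ it has been collapsed by gluing $\eta^j_1$ to $\eta^j_2$ along the pencil geodesics. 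So I would first record that both surfaces are proper geodesic metric spaces (this is asserted in the text for $[X]/\sim_\d$ and the same argument applies verbatim), and that $[X]/\sim_\d$ is obtained from $[X]/\sim_{\hat\d}$ by collapsing each $K_j$, $j\in J_\d\setminus J_{\hat\d}$, in this way.

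Next I would check the hypotheses of Proposition \ref{t:rt1} (or the decomposition Proposition \ref{t:rt1bis}) for this collapsing. By Lemma \ref{l:dc}, each $\eta^j_i$ has length at most $2\sinh\e < 2$, and by Lemma \ref{l:0} the collar $K_j$ has width $h_j = \Arccosh\frac{\sinh\e}{\sinh(L(\g_j)/2)}$; since $\hat\d \le L(\g_j)/2 < \d < \d_1 < \e$, we get a uniform bound $\diam K_j \le 2h_j + 2 \le 2\Arccosh\frac{\sinh\e}{\sinh\hat\d} + 2 =: c_1 < \infty$. For the separation, Lemma \ref{l:us} gives that distinct collars are at distance at least $2\log\frac1{\sinh\e} > 0$ in $X$, and hence also in $[X]/\sim_{\hat\d}$ since the inner metric there only increases distances (the removed pieces are other collars, which are disjoint from these $K_j$). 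Thus $\{K_j\}_{j\in J_\d\setminus J_{\hat\d}}$ is a family of compact sets in $[X]/\sim_{\hat\d}$ with uniformly bounded diameter and uniformly positive mutual distance, and collapsing each to a point yields a space quasi-isometric to $[X]/\sim_{\hat\d}$ with hyperbolicity constants depending only on $c_1$ and $c_2 = 2\log\frac1{\sinh\e}$.

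It then remains to identify the collapsed space with a space quasi-isometric to $[X]/\sim_\d$. Collapsing $K_j$ to a point $x_j$ and then undoing the collapse by instead gluing $\eta^j_1$ to $\eta^j_2$ changes distances by at most the bounded amount $2h_j + 2 \le c_1$ at each $x_j$, and these modifications happen at points that are pairwise at distance at least $c_2$; so one more application of Proposition \ref{t:rt1} (in the reverse direction, or directly via a bilipschitz-up-to-additive-constant comparison) shows the collapsed space is quasi-isometric to $[X]/\sim_\d$ with controlled constants. Composing the two quasi-isometries gives that $[X]/\sim_{\hat\d}$ and $[X]/\sim_\d$ are quasi-isometric; then $(1)$ follows from Theorem \ref{th: stability_hyp} and $(2)$ from Proposition \ref{Prop: qi-pole} applied in both directions. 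The main obstacle I anticipate is the bookkeeping needed to verify that the inner metrics on the various intermediate (non-smooth) surfaces really do satisfy the uniform diameter and separation bounds required by Proposition \ref{t:rt1} — in particular, making sure that removing some collars and gluing others does not secretly bring two of the relevant pieces close together; but Lemma \ref{l:us}, which bounds distances \emph{below} in $X$ itself and hence a fortiori in any inner-metric quotient that only deletes disjoint regions, handles exactly this point.
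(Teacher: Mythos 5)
Your argument is essentially identical to the paper's: both reduce to showing $[X]/\sim_{\hat\d}$ and $[X]/\sim_\d$ are quasi-isometric by collapsing, in each of the two spaces, the collars (respectively the glued curves $\eta_1^j\cong\eta_2^j$) indexed by the symmetric difference of $J_\d$ and $J_{\hat\d}$ to single points, applying Proposition \ref{t:rt1} twice with the same diameter and separation bounds from Lemmas \ref{l:0}, \ref{l:dc} and \ref{l:us}, and observing that the two collapsed spaces coincide. The only cosmetic difference is that the paper phrases the second collapse directly (identifying the curve $\eta_1^j\cong\eta_2^j$ in $[X]/\sim_{\hat\d}$ to a point) rather than as ``undoing'' the first, which is slightly cleaner bookkeeping but the same idea.
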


\begin{proof}
By Theorem \ref{th: stability_hyp} and Proposition \ref{Prop: qi-pole}, it suffices to prove that
$[X]/\sim_{\hat\d}$ and $[X]/\sim_\d$ are quasi-isometric.

Without loss of generality we can assume that $\d < \hat\d$.
Thus, $J_\d \subseteq J_{\hat\d}$.

Let $([X]/\sim_\d)'$ be the geodesic metric space obtained from $[X]/\sim_\d$
by identifying the points in the closure of the collar $K_j$ in a single point $x_j$ for every $j \in J_{\hat\d} \setminus J_\d$, and
$([X]/\sim_{\hat\d})'$ the geodesic metric space obtained from $[X]/\sim_{\hat\d}$
by identifying the points in $\eta_1^j \cong \eta_2^j$ in a single point $x_j$ for every $j \in J_{\hat\d} \setminus J_\d$.
Thus, $([X]/\sim_{\hat\d})' = ([X]/\sim_\d)'$.

Lemma \ref{l:us} gives $d_{[X]/\sim_\d}(K_{j_1}, K_{j_2}) \ge 2\log \frac1{\sinh \e}$ for every $j_1,j_2 \in J_{\hat\d} \setminus J_\d$.
Also, Lemmas \ref{l:0} and \ref{l:dc} give
$$
\begin{aligned}
\diam_{[X]/\sim_\d} K_j
& \le \frac12 \,L(\eta_1^j) + 2h_j + \frac12 \,L(\eta_2^j)
\le 2\Arccosh \frac{\sinh \e}{\sinh (L(\g_j)/2)} + 2 ,
\\
& \le 2\Arccosh \frac{\sinh \e}{\sinh \d} + 2 ,
\end{aligned}
$$
for every $j \in J_{\hat\d} \setminus J_\d$.

These inequalities and Proposition \ref{t:rt1} give that
$[X]/\sim_\d$ and $([X]/\sim_\d)'$ are quasi-isometric, and
$[X]/\sim_{\hat\d}$ and $([X]/\sim_{\hat\d})'$ are quasi-isometric.
Since $([X]/\sim_{\hat\d})' = ([X]/\sim_\d)'$,
we conclude that
$[X]/\sim_{\hat\d}$ and $[X]/\sim_\d$  are quasi-isometric.
\end{proof}


\begin{thebibliography}{99}


%
%

\bibitem{AR} Alvarez, V., Rodríguez, J. M., Structure theorems for Riemann and topological surfaces,
{\it J. London Math. Soc.} {\bf 69} (2004), 153-168.

\bibitem{A1} Ancona, A., Negatively curved manifolds, elliptic operators, and Martin boundary,
{\it Annals of Math.} {\bf 125} (1987) 495-536.

\bibitem{A2} Ancona, A., Positive harmonic functions and hyperbolicity, in Potential Theory, Surveys
and Problems, eds. J. Kr\'al et al., Lecture Notes in Math., No. 1344, Springer-Verlag
(1988), pp. 1-24.

\bibitem{A3} Ancona, A., Theorie du potentiel sur les graphes et les varieties, in Ecol\'e d'Et\'e de
Probabilit\'es de Saint-Flour XVII-1988, eds. A.Ancona et al., Lecture Notes in Math.,
No. 1427, Springer-Verlag (1990).

%

\bibitem{BGS} Ballman, W., Gromov, M., Schroeder, V., Manifolds of Non-positive Curvature, Birkhauser, Boston, 1985.

\bibitem{Be} Bers, L., An Inequality for Riemann Surfaces. Differential Geometry and Complex Analysis. H. E. Rauch Memorial Volume. Springer-Verlag, 1985.

%

\bibitem{BJ} Bishop, C. J., Jones, P. W.,  Hausdorff dimension and Kleinian groups,
{\it Acta Math.} {\bf 179} (1997), 1-39.

\bibitem{BH} Bridson, M., Haefliger, A., Metric spaces of non-positive curvature.
Springer-Verlag, Berlin, 1999.

%
%
%

\bibitem{Bu1} Buser, P., A note on the isoperimetric constant,
{\it Ann. Sci. \'Ecole Normale Sup.} {\bf 15} (1982), 213-230.

\bibitem{Bu} Buser, P., Geometry and Spectra of Compact Riemann Surfaces. Birkh\"auser, Boston, 1992.

\bibitem{BS}  Buyalo, S.,  Schroeder, V., \emph{Elements of Asymptotic
Geometry.} EMS Monographs in Mathematics. Germany, 2007.

\bibitem{CGPR} Cant\'on, A., Granados, A., Portilla, A., Rodr\'{\i}guez, J. M.,
Quasi-isometries and isoperimetric inequalities in planar domains,
{\it J. Math. Soc. Japan} {\bf 67} (2015), 127-157.

%

\bibitem{C1} Chavel, I., Eigenvalues in Riemannian Geometry. Academic Press, New York, 1984.

\bibitem{C2} Chavel, I.,
Isoperimetric inequalities: differential geometric and analytic perspectives.
Cambridge University Press, Cambridge, 2001.

\bibitem{Ch} Cheeger, J., A lower bound for the smallest eigenvalue of
the Laplacian. In {\it Problems in Analysis}, Princeton University
Press, Princeton (1970), 195-199.

%
%
%
%

\bibitem{DMT} Dress, A., Moulton, V., Terhalle, W., T-theory: an overview, {\it Europ. J. Combin.} {\bf 17} (1996), 161-175.

\bibitem{FM1} Fern\'andez, J. L., Meli\'an, M. V.,
Bounded geodesics of Riemann surfaces and hyperbolic manifolds,
{\it Trans. Amer. Math. Soc.} {\bf 347} (1995), 3533-3549.

\bibitem{FM2} Fern\'andez, J. L., Meli\'an, M. V.,
Escaping geodesics of Riemannian surfaces,
{\it Acta Math.} {\bf 187} (2001), 213-236.

\bibitem{FMP1} Fern\'andez, J. L., Meli\'an, M. V., Pestana, D.,
Quantitative mixing results and inner functions, {\it Math. Ann.} {\bf 337} (2007), 233-251.

\bibitem{FMP2} Fern\'andez, J. L., Meli\'an, M. V., Pestana, D.,
Expanding maps, shrinking targets and hitting times,
{\it Nonlinearity} {\bf 25} (2012), 2443-2471.

\bibitem{FR1}
Fern\'andez, J. L., Rodr{\'\i}guez, J. M.,
The exponent of convergence of Riemann surfaces: Bass Riemann surfaces,
{\it Annales Acad. Sci. Fenn. A. I.} {\bf 15} (1990), 165-183.

\bibitem{GH} Ghys, E., de la Harpe, P.,
{\it Sur les Groupes Hyperboliques d'apr\`es Mikhael Gromov.}
Progress in Mathematics, Volume 83, Birkh\"auser, 1990.

\bibitem{GPPRT} Granados, A., Pestana, D., Portilla, A., Rodr\'{\i}guez, J. M., Tour\'{\i}s, E.,
Stability of the injectivity radius under quasi-isometries and applications to isoperimetric inequalities,
{\it RACSAM}, in press, DOI 10.1007/s13398-017-0417-4

\bibitem{GPPR} Granados, A., Pestana, D., Portilla, A., Rodr\'{\i}guez, J. M., Stability of $p$-parabolicity under quasi-isometries.
Submitted.

%
%
%
%

\bibitem{K22} Jonckheere, E. A. and Lohsoonthorn, P., Geometry of network security,
{\it Amer. Control Conf.} {\bf ACC} (2004), 111-151.

\bibitem{K} Kanai, M., Rough isometries and combinatorial approximations of geometries of noncompact
Riemannian manifolds, {\it J. Math. Soc. Japan} {\bf 37} (1985), 391-413.

%
%
%
%
%


\bibitem{MR} Mart\'inez-P\'erez, A. Rodr\'iguez J. M., Cheeger isoperimetric constant of Gromov hyperbolic manifolds and graphs. Commun. Contemp. Math. https://doi.org/10.1142/S021919971750050X


\bibitem{Matsuzaki} Matsuzaki, K., Isoperimetric constants for conservative Fuchsian groups,
{\it Kodai Math. J.} {\bf 28} (2005), 292-300.

\bibitem{MRT} Meli\'an, M. V., Rodr{\'\i}guez, J. M., Tour{\'\i}s, E.,
Escaping geodesics in Riemannian surfaces with pinched variable negative curvature.
Submitted.

\bibitem{MoSoVi} Montgolfier, F., Soto, M. and Viennot, L., Treewidth and Hyperbolicity of the Internet, In: 10th IEEE
International Symposium on Network Computing and Applications (NCA), 2011, pp. 25–32.

\bibitem{P} Paulin, F., On the critical exponent of a discrete group of hyperbolic isometries,
\emph{Differ. Geom. Appl.} {\bf 7} (1997), 231-236.

\bibitem{Po} P\'olya, G.,
Isoperimetric inequalities in mathematical physics.
Princeton University Press, 1951.

\bibitem{PRT1} Portilla, A., Rodr\'{\i}guez, J. M., Tour{\'\i}s, E.,
Gromov hyperbolicity through decomposition of metric spaces II,
{\it J. Geom. Anal.}  {\bf 14} (2004), 123-149.

\bibitem{PRT2} Portilla, A., Rodr\'{\i}guez, J. M., Tour{\'\i}s, E.,
The topology of balls and Gromov hyperbolicity of Riemann surfaces,
{\it Diff. Geom. Appl.} {\bf 21} (2004), 317-335.

\bibitem{PT} Portilla, A., Tour{\'\i}s, E.,
A characterization of Gromov hyperbolicity of surfaces with variable negative curvature,
{\it Publ. Mat.} {\bf 53} (2009), 83-110.

\bibitem{R} Randol, B., Cylinders in Riemann surfaces, {\it Comment. Math. Helv.} {\bf 54} (1979), 1-5.


\bibitem{RT1} Rodr\'{\i}guez, J. M., Tour{\'\i}s, E.,
Gromov hyperbolicity through decomposition of metric spaces,
{\it Acta Math. Hung.}~{\bf 103} (2004), 53-84.

\bibitem{RT3} Rodr\'{\i}guez, J. M., Tourís, E., Gromov hyperbolicity of Riemann surfaces,
{\it Acta Math. Sinica} {\bf 23} (2007), 209-228.

%
%

\bibitem{Sh} Shimizu, H., On discontinuous groups operating on the product of upper half-planes, {\it Ann. of Math.} {\bf 77} (1963), 33-71.

\bibitem{S} Sullivan, D., Related aspects of positivity in Riemannian geometry, {\it J. Diff. Geom.} {\bf 25} (1987), 327-351.

\bibitem{T} Tour{\'\i}s, E., Graphs and Gromov hyperbolicity of non-constant negatively curved surfaces,
{\it J. Math. Anal. Appl.} {\bf 380} (2011), 865-881.


\bibitem{WZ} Wu, Y. and Zhang, C.,
Chordality and hyperbolicity of a graph, {\it Electr. J. Comb.} {\bf 18} (2011), P43.

\end{thebibliography}
\end{document}